\newcommand{\ZZ}{\mathbb{Z}}
\newcommand{\Aut}{\mathrm{Aut}}
\newcommand{\G}{\Gamma}
\newcommand{\N}{\mathcal{N}}
\newcommand{\oo}{\mathcal{O}}
\newcommand{\ot}{\leftarrow}
\newtheorem{theorem}{Theorem}[section]
\newtheorem{proposition}[theorem]{Proposition}
\newtheorem{lemma}[theorem]{Lemma}
\newtheorem{problem}[theorem]{Problem}
\newtheorem{question}[theorem]{Question}
\theoremstyle{definition}
\newtheorem{construction}[theorem]{Construction}
\begin{document}

\begin{center}
\Large{\textbf{On certain edge-transitive bicirculants}} \\ [+4ex]
Robert Jajcay{\small$^{a, b,}$\footnotemark}, \v Stefko Miklavi\v c{\small$^{b, d,}$\footnotemark}, Primo\v z \v Sparl{\small$^{b, c, d,}$\footnotemark$^{,*}$}, Gorazd Vasiljevi\'c
\\ [+2ex]
{\it \small 
$^a$Comenius University, Bratislava, Slovakia\\
$^b$University of Primorska, Institute Andrej Maru\v si\v c, Koper, Slovenia\\
$^c$University of Ljubljana, Faculty of Education, Ljubljana, Slovenia\\
$^d$Institute of Mathematics, Physics and Mechanics, Ljubljana, Slovenia}
\end{center}

\addtocounter{footnote}{-2}
\footnotetext{Supported in part by VEGA 1/0474/15, VEGA 1/0596/17, NSFC 11371307, APVV-15-0220, and by the Slovenian Research Agency research project J1-6720.}
\addtocounter{footnote}{1} \footnotetext{Supported in part by the Slovenian Research Agency, program P1-0285 and research projects N1-0032, N1-0038, J1-6720, J1-7051.}
\addtocounter{footnote}{1} \footnotetext{Supported in part by the Slovenian Research Agency, program P1-0285 and research projects N1-0038, J1-6720, J1-7051.

Email addresses: 
robert.jajcay@fmph.uniba.sk (Robert Jajcay),
stefko.miklavic@upr.si (\v Stefko Miklavi\v c),
primoz.sparl@pef.uni-lj.si (Primo\v z \v Sparl),
gorazd.vasiljevic@gmail.com (Gorazd Vasiljevi\'c).

~*Corresponding author  }


\hrule

\begin{abstract}
A graph $\G$ of even order is a {\em bicirculant} if it admits an automorphism with two orbits of equal length. Symmetry properties of bicirculants, for which at least one of the induced subgraphs on the two orbits of the corresponding semiregular automorphism is a cycle, have been studied, at least for the few smallest possible valences. For valences $3$, $4$ and $5$, where the corresponding bicirculants are called generalized Petersen graphs, Rose window graphs and Taba\v cjn graphs, respectively, all edge-transitive members have been classified. While there are only 7 edge-transitive generalized Petersen graphs and only 3 edge-transitive Taba\v cjn graphs, infinite families of edge-transitive Rose window graphs exist. The main theme of this paper is the question of the existence of such bicirculants for higher valences. It is proved that infinite families of edge-transitive examples of valence $6$ exist and among them infinitely many arc-transitive as well as infinitely many half-arc-transitive members are identified. Moreover, the classification of the ones of valence $6$ and girth $3$ is given. As a corollary, an infinite family of half-arc-transitive graphs of valence $6$ with universal reachability relation, which were thus far not known to exist, is obtained.
\end{abstract}

\hrule

\begin{quotation}
\noindent {\em \small Keywords: bicirculant, edge-transitive, half-arc-transitive, reachability relation}
\end{quotation}

\section{Introduction}
\label{sec:Intro}

Even though almost all graphs have no nontrivial automorphisms (see for instance \cite[Corollary~2.3.3]{GodRoy_book}) investigation of highly symmetric graphs has been a very active topic of research in algebraic graph theory for decades. The majority of several hundreds of papers on this topic have focused on graphs with a particular degree of symmetry, such as vertex-transitivity, edge-transitivity or arc-transitivity (see Section~\ref{sec:prelim} for the definitions). Since the family of all arc-transitive graphs (let alone the families of edge-transitive or vertex-transitive graphs) is way too rich to be investigated as a whole, one has to restrict to some specific subfamily to be able to obtain classification type results. For instance, there are several papers giving a classification of cubic or tetravalent arc-transitive graphs of specific types of orders (see for instance~\cite{FenKwa07, ZhoFen10} and the references therein), or with other restrictions such as their girth (see for instance~\cite{KutMar09}). 

When dealing with graphs with a high degree of symmetry the following viewpoint is of interest. In 1981 Maru\v si\v c conjectured~\cite{Mar81} that every vertex-transitive graph admits a nontrivial {\em semiregular} automorphism (that is, an automorphism having all orbits of the same length). The conjecture, now known as the {\em Polycirculant conjecture}, is still open, but several results confirming the conjecture for some restricted subfamilies have been obtained (see for instance~\cite{DobMalMarNow07, Ver15}). Now, the nicest possibility regarding the existence of semiregular automorphisms is that the semiregular automorphism has just one orbit. In this case the graph is a Cayley graph of a cyclic group, a so called {\em circulant}. These graphs are quite well understood. For instance, arc-transitive circulants have been characterized independently by Kov\'acs and Li~\cite{Kov04, Li05}. Since each edge-transitive Cayley graph of an Abelian group is automatically arc-transitive, this in fact gives a characterization of all edge-transitive circulants.

The next best possibility is that the graph admits a semiregular automorphism with two orbits. Such graphs are called {\em bicirculants}. Even though we are currently nowhere near such general results on arc-transitive, let alone edge-transitive, bicirculants as the ones from~\cite{Kov04, Li05}, some progress has been made. For instance, the automorphism groups of bicirculants, for which the two orbits of the semiregular automorphism are of prime length, are quite well understood~\cite{MalMarSpaFre07}. Classification results for arc-transitive bicirculants of small valences have also been obtained. For instance, combining together the results of~\cite{FruGraWat71, MarPis00, Pis07} one obtains the classification of all cubic arc-transitive bicirculants. Similarly, the classification of tetravalent arc-transitive bicirculants is obtained by combining the results of~\cite{KovKutMar10, KovKuzMal10, KovKuzMalWil12}. Recently, the classification of pentavalent arc-transitive bicirculants was also obtained~\cite{AntHujKut15, ArrHubKutOreSpa15}. 

One of the most important steps in these classifications is the classification of the arc-transitive bicirculants (which for these valences actually coincide with the edge-transitive ones), for which at least one of the induced subgraphs on the two orbits of the corresponding semiregular automorphism is a cycle. The corresponding graphs for valences 3, 4 and 5 are called {\em generalized Petersen} graphs, {\em Rose window} graphs and {\em Taba\v cjn} graphs, respectively. The edge-transitive members of these three families of graphs were classified in~\cite{FruGraWat71}, \cite{KovKutMar10} and \cite{ArrHubKutOreSpa15}, respectively. It is interesting to note that while there are only 7 edge-transitive generalized Petersen graphs and only 3 edge-transitive Taba\v cjn graphs infinite families of edge-transitive Rose window graphs exist (which was first pointed out by Wilson~\cite{Wil08} when he introduced the Rose window graphs). It is thus very natural to ask whether edge-transitive analogues of these graphs of higher valences also exist. The $6$-valent analogues, which are obtained from the generalized Petersen graphs by adding three additional perfect matchings between the two orbits of the corresponding semiregular automorphism (see Section~\ref{sec:Nest} for the formal definition), were first studied by Vasiljevi\'c~\cite{Vas17} who named them {\em Nest} graphs. In this paper we show that infinite families of arc-transitive, as well as of half-arc-transitive (see Section~\ref{sec:prelim} for the definition) Nest graphs exist. It should be pointed out that the members of the infinite family of graphs that has quite recently been obtained by Zhou and Zhang~\cite{ZhoZha17} when they classified half-arc-regular bicirculants of valence 6 also turn out to be Nest graphs. The existence of infinitely many edge-transitive Nest graphs thus motivates the following question.

\begin{question}
\label{que:main}
For which integers $d > 6$ does there exist an edge-transitive bicirculant of valence $d$, such that at least one of the subgraphs induced on the two orbits of the corresponding semiregular automorphism is a cycle? For which of these valences do infinitely many such examples exist?
\end{question}

For small valences one can search for examples using a computer. An exhaustive computer search shows that there exists no edge-transitive bicirculant of valence $d$ where $7 \leq d \leq 10$ and order at most $100$, such that at least one of the subgraphs induced on the two orbits of the corresponding semiregular automorphism is a cycle. Due to the fact that the seven cubic examples have orders 8, 10, 16, 20, 20, 24 and 48, the three pentavalent examples have orders 6, 12 and 12, while in the cases of valence 4 and 6 we have examples of almost every even order starting from $6$ and $8$, respectively, it very well might be the case that the answer to Question~\ref{que:main} is that there are in fact no such examples. This implies that the following natural problem might be quite important.

\begin{problem}
\label{pro:main}
Classify the edge-transitive Nest graphs.
\end{problem}

We give a partial solution to this problem by classifying the examples of girth $3$ (see Theorem~\ref{the:girth3}). Since the girth of any Nest graph is at most $6$ (see Section~\ref{sec:Nest}) this leaves the girths $4$, $5$ and $6$ to be dealt with. We finish this section by highlighting another result of this paper. When dealing with half-arc-transitive graphs the reachability relation and the corresponding alternets (see Section~\ref{sec:prelim} for the definitions) play an important role. Namely, the alternets give an insight into the structure of the graph in question, and, in the case that we have more than one alternet, give rise to imprimitivity block systems for the corresponding automorphism group. The situation when one has just one alternet, that is when the reachability relation is universal, thus deserves special attention (we remark that half-arc-transitive graphs with a few alternets were studied in~\cite{HujKutMar14}). In 2010 an infinite family of half-arc-transitive graphs of valence 12 with universal reachability relation was constructed~\cite{KutMarSpa10}. Until now, this was the smallest valence for which a half-arc-transitive graph with universal reachability relation was known to exist. Since Maru\v si\v c proved~\cite{Mar98} that the reachability relation cannot be universal in a half-arc-transitive graph of valence $4$, the smallest possible valence for which a half-arc-transitive graph with universal reachability relation could exist is $6$. In Theorem~\ref{the:universal} we prove that $6$ is indeed attained by exhibiting an infinite family of half-arc-transitive graphs of valence $6$ with universal reachability relation.

\section{Notation and definitions}
\label{sec:prelim}

Throughout the paper the graphs are assumed to be finite and undirected, even though we will occasionally be working with an orientation of the edges of the graph, implicitly given by the action of its automorphism group. For a graph $\G$ and its vertex $x$ the neighborhood of $x$ in $\G$ will be denoted by $\G(x)$, while the fact that the vertices $x$ and $y$ are adjacent in $\G$ will be denoted by $x \sim y$. Throughout the paper we will often be working with $2$-paths of the graph in question. We point out that, unless otherwise specified, we consider a $2$-path $(x,y,z)$ simply as a subgraph of the graph in question, and so we consider the $2$-paths $(x,y,z)$ and $(z,y,x)$ as being equal.

For an integer $n$ the residue class ring modulo $n$ will be denoted by $\ZZ_n$. Throughout the paper we will constantly be working with integers and elements from $\ZZ_n$ and will sometimes regard them simply as integers while at other times as elements of $\ZZ_n$. For instance, if for $1 \leq b,k \leq n-1$ we write $b+2k = 1$ we mean that when $b$ and $k$ are viewed as elements of $\ZZ_n$ equality $b+2k = 1$ holds (in $\ZZ_n$). On the other hand, if we write $b+k < n$ we mean that the sum of integers $b$ and $k$ is strictly smaller than $n$ (of course without making the calculation modulo $n$). This should cause no confusion but we will nevertheless sometimes stress that we want to view a certain expression within $\ZZ_n$ to make things completely unambiguous.

A subgroup $G \leq \Aut(\G)$ of the automorphism group of the graph $\G$ is said to be {\em vertex-transitive}, {\em edge-transitive} and {\em arc-transitive}, respectively, if the induced action of $G$ on the vertex set, edge set and arc set, respectively, is transitive. If $G$ is vertex- and edge-transitive but not arc-transitive, it is {\em half-arc-transitive}. When $G =  \Aut(\G)$ in the above definitions we say that $\G$ is vertex-, edge-, arc- or half-arc-transitive, respectively. 

It is well known that in a half-arc-transitive graph $\G$ no automorphism can interchange a pair of adjacent vertices (see for instance~\cite[Proposition~2.1]{Mar98}). Moreover, the action of the automorphism group of $\G$ induces two paired natural orientations of the edges of $\G$, implying that $\G$ is of even valence. When dealing with half-arc-transitive graphs one usually fixes one of these two natural orientations of the edges of $\G$. The fact that the edge $xy$ of $\G$ is oriented from $x$ to $y$ in this orientation will be denoted by $x \to y$ or $y \ot x$ and the vertices $x$ and $y$ will be referred to as the {\em tail} and the {\em head} of the edge $xy$, respectively. Of course, for any vertex $x$, half of the edges, incident with $x$, have $x$ as their tail and half of them have $x$ as their head.

Suppose $\G$ is half-arc-transitive and fix one of the two natural $\Aut(\G)$-induced orientations of the edges. One can then define the {\em reachability relation} on the edge set of $\G$, first introduced in~\cite{CamPraWor93} in the context of infinite digraphs, as follows. An edge $f$ is {\em reachable} from the edge $e$ if there exists an alternating path (with respect to the fixed orientation) whose starting and terminal edges are $e$ and $f$. The reachability relation is clearly an equivalence relation and does not depend on which of the two paired $\Aut(\G)$-induced orientations of the edges one has chosen. Its equivalence classes are called {\em alternets}.

\section{The Nest graphs}
\label{sec:Nest}

In this section the Nest graphs are formally introduced and two infinite families of edge-transitive examples are identified. The Nest graphs are obtained from the generalized Petersen graphs by adding three additional perfect matchings between the two orbits of the natural $(2,n)$-semiregular automorphism, which are all consistent with its action.

\begin{construction}
Let $n \geq 4$ and $1 \leq a,b,c,k \leq n-1$ be integers such that $k \neq n/2$ and $a$, $b$ and $c$ are pairwise distinct. Then the {\em Nest graph} $\N(n;a,b,c;k)$ is the graph of order $2n$ with vertex set consisting of two sets of size $n$, namely $\{u_i \colon i \in \ZZ_n\}$ and $\{v_i \colon i \in \ZZ_n\}$, and edge set consisting of the following six sets of size $n$ (where computations are performed modulo $n$): 
\begin{itemize}\itemsep = 0pt
\item	the set $E_{rim}$ of {\em rim edges} $\{u_i u_{i+1} \colon i \in \ZZ_n\}$, 
\item	the set $E_{hub}$ of {\em hub edges} $\{v_i v_{i+k} \colon i \in \ZZ_n\}$, 
\item	the set $E_0$ of {\em $0$-spokes}  $\{u_i v_{i} \colon i \in \ZZ_n\}$, 
\item	the set $E_a$ of {\em $a$-spokes} $\{u_i v_{i+a} \colon i \in \ZZ_n\}$, 
\item	the set $E_b$ of {\em $b$-spokes}  $\{u_i v_{i+b} \colon i \in \ZZ_n\}$, 
\item	the set $E_c$ of {\em $c$-spokes}  $\{u_i v_{i+c} \colon i \in \ZZ_n\}$.
\end{itemize}

Observe that the assumptions on the parameters imply that the graph $\N(n;a,b,c;k)$ is indeed a regular graph of valence $6$. Moreover, it is clear that the graph admits the $(2,n)$-semiregular automorphism $\rho$, mapping according to the rule 
\begin{equation}
\label{eq:rho}
u_i\rho = u_{i+1}\quad \mathrm{and} \quad v_i\rho = v_{i+1}\quad \mathrm{for\ all\ }i \in \ZZ_n.
\end{equation}
\end{construction}

Unlike the generalized Petersen graphs and the Rose window graphs, all of which admit an additional involutory automorphism normalizing $\rho$, there are in general no additional automorphisms (other than the ones from the subgroup $\langle \rho \rangle$) which would automatically be ensured in the Nest graphs. For instance, it is easy to verify that the graph $\N(7;1,2,4;2)$ is such an example (it is in fact the smallest such graph). Nevertheless, the following holds.

\begin{lemma}
\label{le:c=a+b}
Let $\G = \N(n;a,b,c;k)$ be a Nest graph with $c = a + b$ in $\ZZ_n$. Then the permutation $\tau$ of $V(\G)$, given by the rule
\begin{equation}
\label{eq:tau}
	u_i\tau = u_{-i}\quad \mathrm{and}\quad v_i\tau = v_{-i+c}\quad \mathrm{for\ all\ }i \in \ZZ_n,
\end{equation}
is an automorphism of $\G$. Consequently, $\G$ is edge-transitive if and only if it is arc-transitive. 
\end{lemma}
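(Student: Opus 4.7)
The plan is to split the proof into two parts: first verify that $\tau$ is an automorphism by checking how it acts on each of the six $\langle\rho\rangle$-orbits of edges, and then deduce arc-transitivity from edge-transitivity by producing a single automorphism that swaps a pair of adjacent vertices.

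For the first part, I would simply compute the image of a generic edge from each of the sets $E_{rim}$, $E_{hub}$, $E_0$, $E_a$, $E_b$, $E_c$ under $\tau$. A rim edge $u_i u_{i+1}$ maps to $u_{-i}u_{-i-1}\in E_{rim}$, and a hub edge $v_iv_{i+k}$ maps to $v_{-i+c}v_{-i-k+c}\in E_{hub}$. For the spokes, the $0$-spoke $u_iv_i$ maps to $u_{-i}v_{-i+c}$, which is a $c$-spoke, and conversely a $c$-spoke $u_iv_{i+c}$ maps to $u_{-i}v_{-i}\in E_0$. This is where the hypothesis $c=a+b$ enters: the $a$-spoke $u_iv_{i+a}$ maps to $u_{-i}v_{-i-a+c}=u_{-i}v_{-i+b}$, a $b$-spoke, and by symmetry the $b$-spokes are permuted onto the $a$-spokes. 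Hence $\tau$ is a permutation of $V(\G)$ that preserves the edge set, so $\tau\in\Aut(\G)$. A quick check $(u_i\tau)\tau=u_i$ and $(v_i\tau)\tau=v_{-(-i+c)+c}=v_i$ also shows $\tau$ is an involution, which is a useful sanity check but not logically needed.

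For the second part, I would use the standard fact (recalled in Section~\ref{sec:prelim}, cf.~\cite[Proposition~2.1]{Mar98}) that a half-arc-transitive graph admits no automorphism interchanging a pair of adjacent vertices. Consequently, to upgrade edge-transitivity to arc-transitivity for the Nest graph $\G$ (which is automatically vertex-transitive via $\rho$), it is enough to exhibit one automorphism that swaps some edge's endpoints. The composition $\rho\tau$ acts on the rim vertices by $u_i\mapsto u_{-i-1}$, so it interchanges the adjacent vertices $u_0$ and $u_{-1}$, which span the rim edge $u_{-1}u_0$. Therefore $\G$ cannot be half-arc-transitive, and the only remaining possibility under the edge-transitivity assumption is that $\G$ is arc-transitive.

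There is no real obstacle here; the only point one has to be careful about is the bookkeeping of signs and indices modulo $n$, and in particular to notice that $c=a+b$ is precisely the algebraic identity that makes $\tau$ swap the pair $E_a\leftrightarrow E_b$ and the pair $E_0\leftrightarrow E_c$ of spoke-sets. Everything else is a direct substitution.
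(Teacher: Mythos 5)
Your proof is correct and follows essentially the same route as the paper: you verify that $\tau$ preserves each of the six edge classes (with $c=a+b$ exactly accounting for the swaps $E_0\leftrightarrow E_c$ and $E_a\leftrightarrow E_b$, a check the paper leaves to the reader), and you then rule out half-arc-transitivity by exhibiting a composition of $\tau$ and $\rho$ that interchanges two adjacent rim vertices, just as the paper does with $\tau\rho$ swapping $u_0$ and $u_1$. The only slip is the parenthetical claim that $\G$ is vertex-transitive ``via $\rho$'': $\rho$ alone has two vertex orbits, so the vertex-transitivity needed to invoke the half-arc-transitivity dichotomy must instead be deduced from the assumed edge-transitivity, using that rim edges lie inside a $\rho$-orbit while spokes join the two orbits---exactly the one-line justification the paper gives.
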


\begin{proof}
Using the fact that $c = a+b$ it can easily be verified that $\tau$ preserves adjacency. Now, if $\G$ is edge-transitive, it is automatically vertex-transitive (since the automorphism $\rho$ from (\ref{eq:rho}) has just two orbits on the vertex set of $\G$ and some edges of $\G$ connect vertices from the same orbit of $\rho$ while other connect vertices from different orbits of $\rho$). Since $\tau\rho$ interchanges the pair $u_0$, $u_1$ of adjacent vertices it thus follows that $\G$ is arc-transitive.
\end{proof}

Of course, different sets of parameters $a,b,c,k$ for a fixed $n$ may result in isomorphic graphs. We record some rather obvious isomorphisms, which are best described intuitively by `reflecting' with respect to the edge $u_0v_0$ (that is, exchanging the roles of $u_i$ and $v_i$ by $u_{n-i}$ and $v_{n-i}$, respectively, for all $i$) or ``rotating'' the set of vertices of the form $v_i$ by $a$ steps (that is, renaming each $v_i$ by $v_{i-a}$).

\begin{lemma}
\label{le:isom}
Let $n \geq 4$ and $1 \leq a,b,c,k \leq n-1$ be integers such that $k \neq n/2$ and $a$, $b$ and $c$ are pairwise distinct. Then the graph $\N(n;a,b,c;k)$ is isomorphic to each of the graphs $\N(n;a',b',c';k)$, where $\{a',b',c'\} = \{a,b,c\}$, as well as to any of the graphs $\N(n;a,b,c;-k)$, $\N(n;-a,-b,-c;k)$ and $\N(n;-a,b-a,c-a;k)$.
\end{lemma}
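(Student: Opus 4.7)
The plan is to exhibit for each of the four claimed equivalences an explicit vertex bijection and verify that it maps the six edge classes of the source graph onto the corresponding six edge classes of the target. All the verifications are routine index-bookkeeping; the only content of the proof is choosing the right relabeling. The first two reductions are witnessed by the identity map. Since the definition of $\N(n;a,b,c;k)$ uses only the unordered collection $\{E_a,E_b,E_c\}$ of spoke families, permuting the symbols $a,b,c$ leaves the edge set literally unchanged, which handles $\N(n;a,b,c;k)\cong\N(n;a',b',c';k)$. For the sign change $k\mapsto -k$, the reindexing $j=i+k$ shows that $\{v_iv_{i+k}:i\in\ZZ_n\}=\{v_jv_{j-k}:j\in\ZZ_n\}$, so the hub edge sets of $\N(n;a,b,c;k)$ and $\N(n;a,b,c;-k)$ coincide.

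For the reflection isomorphism $\N(n;a,b,c;k)\cong\N(n;-a,-b,-c;k)$ I would define the permutation $\sigma$ of the vertex set by $u_i\sigma=u_{-i}$ and $v_i\sigma=v_{-i}$ for every $i\in\ZZ_n$. A direct check then shows that $\sigma$ preserves $E_{rim}$, $E_{hub}$ and $E_0$ setwise (since each of these sets is invariant under the index negation) and that for $x\in\{a,b,c\}$ an $x$-spoke $u_iv_{i+x}$ is mapped to $u_{-i}v_{-i-x}$, i.e.\ to an edge of the $(-x)$-spoke family in the target graph. Hence $\sigma$ is a graph isomorphism onto $\N(n;-a,-b,-c;k)$.

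For the last isomorphism I would use the $v$-rotation $\varphi$ that fixes every $u_i$ and identifies the source vertex $v_i$ with the target vertex $v_{i-a}$. Because the hub edges depend only on differences of indices of $v$-vertices, and the rim edges do not involve $v$-vertices at all, both $E_{hub}$ and $E_{rim}$ are preserved. Under $\varphi$ the $a$-spoke $u_iv_{i+a}$ is sent to $u_iv_i$, hence becomes a $0$-spoke; the $0$-spoke $u_iv_i$ becomes the $(-a)$-spoke $u_iv_{i-a}$; and analogously the $b$- and $c$-spokes become the $(b-a)$- and $(c-a)$-spokes of the target. The only point that warrants attention is verifying that the resulting graph $\N(n;-a,b-a,c-a;k)$ is a valid Nest graph, i.e.\ that $-a$, $b-a$, $c-a$ are pairwise distinct and all nonzero in $\ZZ_n$; this is immediate from the hypothesis that $0,a,b,c$ are pairwise distinct. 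No step of the argument presents a genuine obstacle, the entire proof being a careful but unsurprising tracking of indices.
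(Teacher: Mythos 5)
Your proof is correct and matches the paper's approach: the paper states this lemma without a formal proof, merely describing the same two nontrivial maps you use, namely the reflection $u_i\mapsto u_{-i}$, $v_i\mapsto v_{-i}$ and the rotation renaming each $v_i$ as $v_{i-a}$ (the permutation of $\{a,b,c\}$ and the change $k\mapsto -k$ being immediate from the definition of the edge sets). Your index-checking, including the remark that $-a$, $b-a$, $c-a$ remain pairwise distinct and nonzero, fills in exactly the routine verification the paper leaves to the reader.
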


The above lemma implies that we can assume $a < b < c$ and $k < n/2$. Moreover, ``rotating'' the vertices of the form $v_i$ by $a$, $b$ or $c$, if necessary, we can assume that $a$ is minimal among the elements of $\{a, b-a, c-b, n-c\}$. Unless otherwise specified we will always make this assumption.
\medskip

One of the goals of this paper is to investigate the edge-transitive Nest graphs. Using Lemma~\ref{le:isom} a computer search for all edge-transitive examples up to some reasonable order can be performed. In Table~\ref{tab:all} all pairwise nonisomorphic edge-transitive Nest graphs of order up to $220$ are given. For each of the graphs the defining parameters $n,a,b,c$ and $k$ are given, as well as its girth, an indication of whether or not the graph is bipartite, the size of the vertex-stabilizer and an indication of whether the graph is arc-transitive or half-arc-transitive. Observe that every edge-transitive Nest graph is automatically vertex-transitive, and so the edge-transitive graphs that are not arc-transitive are half-arc-transitive. Table~\ref{tab:all} reveals that, in contrast to the fact that there is no half-arc-transitive Rose window graph~\cite{KovKutMar10}, there do exist half-arc-transitive Nest graphs. 

\begin{table}
\centering
{\scriptsize
\begin{tabular}{|c|c|c|c|c||c|c|c|c|c|}
\hline
$(n;a,b,c;k)$ & girth & bip & stab & AT/HAT & $(n;a,b,c;k)$ & girth & bip & stab & AT/HAT \\
\hline
$(4;1,2,3;1)$ & $3$ & no & $48$ & AT  & 	$(54;9,25,34;1)$ & $4$ & no & $6$ & AT \\
$(5;1,2,3;2)$ & $3$ & no & $12$ & AT  & 	$(54;2,27,29;1)$ & $4$ & no & $12$ & AT \\
$(6;1,3,4;1)$ & $3$ & no & $12$ & AT  & 	$(58;2,29,31;1)$ & $4$ & no & $12$ & AT \\
$(8;1,3,4;3)$ & $3$ & no & $72$ & AT  & 	$(60;2,15,17;29)$ & $4$ & no & $6$ & AT \\
$(8;1,2,5;3)$ & $3$ & no & $12$ & AT  & 	$(62;2,31,33;1)$ & $4$ & no & $12$ & AT \\
$(10;2,5,7;1)$ & $4$ & no & $12$ & AT  & 	$(62;1,11,12;1)$ & $3$ & no & $6$ & AT \\
$(10;1,3,4;3)$ & $3$ & no & $12$ & AT  & 	$(66;2,33,35;1)$ & $4$ & no & $12$ & AT \\
$(10;2,4,6;3)$ & $4$ & yes & $12$ & AT  & 	$(68;2,17,19;33)$ & $4$ & no & $6$ & AT \\
$(12;1,3,10;5)$ & $3$ & no & $6$ & AT  & 	$(70;5,27,32;1)$ & $4$ & no & $6$ & AT \\
$(12;2,4,8;5)$ & $4$ & yes & $48$ & AT  & 	$(70;2,35,37;1)$ & $4$ & no & $12$ & AT \\
$(14;2,7,9;1)$ & $4$ & no & $12$ & AT  & 	$(74;1,21,22;1)$ & $3$ & no & $6$ & AT \\
$(14;1,5,6;1)$ & $3$ & no & $6$ & AT  & 	$(74;2,37,39;1)$ & $4$ & no & $12$ & AT \\
$(18;3,7,10;1)$ & $4$ & no & $6$ & AT  & 	$(76;2,19,21;37)$ & $4$ & no & $6$ & AT \\
$(18;2,9,11;1)$ & $4$ & no & $12$ & AT  & 	$(76;1,15,54;37)$ & $3$ & no & $3$ & HAT \\
$(20;2,5,7;9)$ & $4$ & no & $6$ & AT  & 	$(78;2,39,41;1)$ & $4$ & no & $12$ & AT \\
$(22;2,11,13;1)$ & $4$ & no & $12$ & AT  & 	$(78;1,33,34;1)$ & $3$ & no & $6$ & AT \\
$(26;1,7,8;1)$ & $3$ & no & $6$ & AT  & 	$(82;2,41,43;1)$ & $4$ & no & $12$ & AT \\
$(26;2,13,15;1)$ & $4$ & no & $12$ & AT  & 	$(84;1,10,51;41)$ & $3$ & no & $3$ & HAT \\
$(28;1,6,19;13)$ & $3$ & no & $3$ & HAT  & 	$(84;2,21,23;41)$ & $4$ & no & $6$ & AT \\
$(28;2,7,9;13)$ & $4$ & no & $6$ & AT  & 	$(86;1,13,14;1)$  & $3$ & no  & $6$ & AT\\
$(30;2,15,17;1)$ & $4$ & no & $12$ & AT  & 	$(86;2,43,45;1)$  & $4$ & no & $12$ & AT\\
$(34;2,17,19;1)$ & $4$ & no & $12$ & AT  & 	$(90;15,43,58;1)$ & $4$ & no & $6$ & AT \\
$(36;3,10,25;17)$ & $5$ & no & $3$ & HAT  & 	$(90;2,45,47;1)$ & $4$ & no & $12$ & AT \\
$(36;2,9,11;17)$ & $4$ & no & $6$ & AT  & 	$(92;2,23,25;45)$ & $4$ & no & $6$ & AT \\
$(38;2,19,21;1)$ & $4$ & no & $12$ & AT  & 	$(94;2,47,49;1)$ & $4$ & no & $12$ & AT \\
$(38;1,15,16;1)$ & $3$ & no & $6$ & AT  & 	$(98;1,37,38;1)$ & $3$ & no & $6$ & AT \\
$(42;2,21,23;1)$ & $4$ & no & $12$ & AT &	$(98;2,49,51;1)$ & $4$ & no & $12$ & AT \\
$(42;1,9,10;1)$ & $3$ & no & $6$ & AT &		$(100;2,25,27;49)$ & $4$ & no & $6$ & AT \\	
$(44;2,11,13;21)$ & $4$ & no & $6$ & AT &	$(102;2,51,53;1)$  & $4$ & no & $12$ & AT\\
$(46;2,23,25;1)$ & $4$ & no & $12$ & AT &	$(106;2,53,55;1)$  & $4$ & no & $12$ & AT\\
$(50;2,25,27;1)$ & $4$ & no & $12$ & AT  &	$(108;2,27,29;53)$ & $4$ & no & $6$  & AT\\
$(52;2,13,15;25)$ & $4$ & no & $6$ & AT	 &	$(108;9,34,79;53)$ & $6$ & no & $3$  & HAT\\
$(52;1,7,34;25)$ & $3$ & no & $3$ & HAT  & 	$(110;2,55,57;1)$ &  $4$ & no & $12$ & AT\\
\hline
\end{tabular}
}
\caption{All edge-transitive Nest graphs of order up to $220$.}
\label{tab:all}
\end{table}

Table~\ref{tab:all} also seems to suggest that the family of edge-transitive, as well as the family of arc-transitive Nest graphs, is infinite. We prove this by exhibiting an infinite family of examples in Lemma~\ref{le:fam1}. Two additional infinite families of arc-transitive Nest graphs are given in Lemma~\ref{le:fam2} and Lemma~\ref{le:lambda2_fam}, while an infinite family of half-arc-transitive Nest graphs is given in Proposition~\ref{pro:lambda1}. There are various other observations to be made. For instance, except for the graph of order $10$ (which happens to be the complement of the Petersen graph) the order of all edge-transitive Nest graphs seems to be divisible by $4$ (that is, $n$ is even) but, except for the two graphs of order $16$, none of them seems to have order divisible by $16$. Next, the vertex-stabilizers seem to be bounded from above by $12$ (except for the three graphs of orders $8$, $16$ and $24$, respectively). To mention just two more things, it appears that, except for two graphs of orders $20$ and $24$, none of the examples is bipartite and, except possibly for a very specific family of half-arc-transitive examples whose orders are an odd multiple of $72$, all examples are either of girth $3$ or $4$. 

It is not difficult to show that the girth of (edge-transitive) Nest graphs is at most $6$. Namely, every Nest graph $\N(n;a,b,c;k)$ contains several $6$-cycles. For instance, since the assumption $1 \leq a < b < c \leq n-1$ implies that $b$ is different from both $1$ and $-1$, we have the $6$-cycle $(v_0,u_0,u_1,v_1,u_{1-b},u_{-b})$. On the other hand, the example $\N(108;9,34,79;53)$ from Table~\ref{tab:all} shows that the girth of an (edge-transitive) Nest graph can actually be equal to $6$. Since the only two graphs from Table~\ref{tab:all}, whose girth exceeds $4$, are not arc-transitive, it is an interesting question to ask whether an arc-transitive Nest graph with girth greater than $4$ exists. \medskip

Based on the data from Table~\ref{tab:all} it is easy to identify infinite families of edge-transitive Nest graphs. We present two in the next two lemmas. 

\begin{lemma}
\label{le:fam1}
Let $m \geq 3$ be an odd integer. Then the Nest graph $\N(2m;2,m,m+2;1)$ is arc-transitive having vertex-stabilizers of order $12$.
\end{lemma}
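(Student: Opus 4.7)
The plan is to recognize $\G = \N(2m;\,2,m,m+2;\,1)$ as the categorical (tensor) product $K_4 \otimes C_m$, since every conclusion of the lemma then follows transparently from well-known properties of this product.

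First I would exhibit $\G$ as a circulant. Since $m$ is odd, the assignment $\varphi(u_i) = 2i$ and $\varphi(v_i) = 2i - m - 2$ (reduced modulo $4m$) is a bijection $V(\G) \to \ZZ_{4m}$ taking $u$-vertices to even residues and $v$-vertices to odd residues. Computing $\varphi(x) - \varphi(y)$ for an edge $xy$ in each of the six $\rho$-orbits of edges yields exactly the differences $\pm 2$ (for $E_{rim}$ and $E_{hub}$), $\pm(m+2)$ (for $E_0$ and $E_{m+2}$), and $\pm(m-2)$ (for $E_2$ and $E_m$). Hence $\G$ is isomorphic to the circulant $\Cay(\ZZ_{4m}, S)$ with $S = \{\pm 2,\,\pm(m-2),\,\pm(m+2)\}$.

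Next I would apply the Chinese Remainder Theorem: because $m$ is odd, $\gcd(4,m) = 1$ and $\ZZ_{4m} \cong \ZZ_4 \times \ZZ_m$. A short case check (splitting by $m \bmod 4$) shows that $S$ maps bijectively onto the Cartesian product $(\ZZ_4 \setminus \{0\}) \times \{\pm 2\}$. Since this connection set is itself a product, the Cayley graph factorizes as the tensor product $K_4 \otimes \Cay(\ZZ_m, \{\pm 2\})$, and because $\gcd(2, m) = 1$ the second factor is the $m$-cycle $C_m$; thus $\G \cong K_4 \otimes C_m$.

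Finally, the direct product $\Aut(K_4) \times \Aut(C_m) = S_4 \times D_m$ acts on $K_4 \otimes C_m$ componentwise and hence embeds into $\Aut(\G)$. This group has order $48m$, is transitive on vertices, and its stabilizer of $(1, 0)$ is $S_3 \times \langle \iota \rangle$, where $\iota \colon i \mapsto -i$ in $\ZZ_m$, of order $12$; it acts transitively on the six neighbors $\{2,3,4\} \times \{\pm 2\}$ of $(1,0)$ via the product of the natural $S_3$-action on $\{2,3,4\}$ and the $\ZZ_2$-action on $\{\pm 2\}$. This shows $\G$ is arc-transitive with vertex-stabilizer of order at least $12$, and the reverse inequality follows from the standard fact that $\Aut(G_1 \otimes G_2) = \Aut(G_1) \times \Aut(G_2)$ when $G_1$ and $G_2$ are connected, non-bipartite and non-isomorphic (both conditions hold here, since $C_m$ is non-bipartite for odd $m$ and $K_4 \not\cong C_m$).

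The main obstacle is spotting the tensor product identification; once the map $\varphi$ is written down the isomorphism becomes a routine six-case verification, and all the symmetry statements then follow immediately from the product structure.
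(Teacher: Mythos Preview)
Your argument is correct in substance and takes a genuinely different route from the paper. The paper works entirely by hand: it exhibits three explicit automorphisms $\varphi,\tau,\eta$ of $\G$ (in addition to $\rho$), checks that together they act arc-transitively, and then bounds the stabilizer by a direct common-neighbour count showing that any automorphism fixing $u_0$ and $u_1$ must fix every $u_i$ and can only swap $v_{i+1}$ with $v_{i+m+1}$, hence equals $\eta$ or the identity. Your approach, by contrast, identifies $\G$ structurally as the tensor product $K_4\otimes C_m$ and reads everything off the factorisation. This is more conceptual and makes the order-$12$ stabiliser transparent (it is visibly $S_3\times\ZZ_2$), whereas the paper's argument is completely self-contained and requires no outside machinery. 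Both the circulant description and the CRT splitting check out exactly as you describe.

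One point does need tightening. The ``standard fact'' you invoke, namely $\Aut(G_1\otimes G_2)=\Aut(G_1)\times\Aut(G_2)$ for connected non-bipartite non-isomorphic $G_1,G_2$, is not true in that generality: if $G_1$ and $G_2$ share a common prime direct factor, the product can acquire extra automorphisms permuting those isomorphic prime pieces. The correct hypothesis is that $G_1$ and $G_2$ are \emph{coprime} with respect to the direct product (equivalently here, that each is prime and $G_1\not\cong G_2$). You therefore need a line verifying that $K_4$ and $C_m$ are both prime in the direct-product sense. For $K_4$ this is immediate (a nontrivial factorisation would force loops on all vertices of a factor), and for $C_m$ with $m$ odd it follows from the edge count, since a direct product of loopless graphs has an even number of edges. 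With that addition your argument is complete.
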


\begin{proof}
Let $\G = \N(2m;2,m,m+2;1)$ and let $\varphi$ be the permutation of the vertex set of $\G$, given by the rule
$$
	u_i\varphi = \left\{\begin{array}{lcl}u_{-i} & ; & i \mathrm{\ even}\\
				v_{-i+1} & ; & i \mathrm{\ odd},\end{array}\right. \quad \quad
	v_i\varphi = \left\{\begin{array}{lcl}u_{-i+1} & ; & i \mathrm{\ even}\\
				v_{-i+2} & ; & i \mathrm{\ odd},\end{array}\right. 				
$$
where $i \in \ZZ_n$. 
It is easy to verify that $\varphi$ is an automorphism of $\G$. We give some details and leave the rest to the reader. For instance, the rim edge $u_iu_{i+1}$ is mapped to the $0$-spoke $u_{-i}v_{-i}$ or the $2$-spoke $v_{-i+1}u_{-i-1}$, depending on whether $i$ is even or odd, respectively. Similarly, since $m$ is odd, the $m$-spoke $u_iv_{m+i}$ is mapped to the $(m+2)$-spoke $u_{-i}v_{-i+m+2}$ or the $m$-spoke $u_{-i+1+m}v_{-i+1}$, depending on whether $i$ is even or odd, respectively. 

Note that Lemma~\ref{le:c=a+b} implies that the permutation $\tau$ defined in (\ref{eq:tau}) is an automorphism of $\G$. It is also easy to see that the permutation $\eta$, defined by the rule
$$
	u_i\eta = u_i\quad \mathrm{and} \quad v_i\eta = v_{i+m}\quad \mathrm{for\ all\ }i \in \ZZ_n,
$$
is an involutory automorphism of $\G$. It is now clear that the group $G = \langle \rho, \varphi, \tau, \eta\rangle$, where $\rho$ is as in (\ref{eq:rho}), acts arc-transitively on $\G$. We finally prove that in fact $G = \Aut(\G)$ and that the vertex-stabilizers in $G$ are of order $12$. In view of arc-transitivity it suffices to prove that the only nontrivial element of the pointwise stabilizer of the arc $(u_0,u_1)$ is the involution $\eta$. To see this let $\psi$ be any nontrivial automorphism of $\G$ fixing both $u_0$ and $u_1$. It is easy to see that of the five remaining neighbors of $u_0$ (other than $u_1$) $u_{-1}$ is the only one having exactly three common neighbors with $u_1$ (namely, $u_0$, $v_1$ and $v_{m+1}$). It thus follows that $\psi$ fixes $u_{-1}$ as well. Repeating the same argument for $u_{-1}$ and $u_0$ one can see that $\psi$ fixes $u_{-2}$, and inductively that it fixes each $u_i$. Since for each $i$ the only two vertices of the form $v_j$, having four common neighbors with $u_i$, are $v_{i+1}$ and $v_{i+m+1}$, it is now clear that $\psi = \eta$.
\end{proof}

We remark that it can be shown that the graph $\N(4;1,2,3;1)$ and the graphs from Lemma~\ref{le:fam1} are the only edge-transitive Nest graphs admitting a nontrivial automorphism fixing all the vertices $u_i$ pointwise. Since the proof is somewhat tedious, while this fact will not play a role in the remainder of our paper, we do not provide it here. Instead, we provide another infinite family of arc-transitive Nest graphs.

\begin{lemma}
\label{le:fam2}
Let $m \geq 3$ be an odd integer. Then the Nest graph $\N(4m;2,m,m+2;2m-1)$ is arc-transitive with vertex stabilizers of order $6$. 
\end{lemma}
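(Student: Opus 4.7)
The plan is to invoke Lemma~\ref{le:c=a+b} and then construct one further involutory automorphism that, together with $\rho$ and $\tau$, produces an arc-transitive action of a group of order $48m$. First, I note that $c = m+2 = a + b$, so Lemma~\ref{le:c=a+b} supplies $\tau \in \Aut(\G)$ and reduces arc-transitivity to edge-transitivity. Under $\langle \rho, \tau \rangle$ the edges of $\G$ split into four orbits: the rim edges, the hub edges, the union of the $0$- and $(m+2)$-spokes, and the union of the $2$- and $m$-spokes.

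Next I would introduce the key new automorphism $\mu$ of $V(\G)$ defined piecewise by
\[
u_i\mu = \begin{cases} v_{m+1-i} & i \text{ even},\\ u_{-i} & i \text{ odd},\end{cases} \qquad
v_i\mu = \begin{cases} u_{m+1-i} & i \text{ even},\\ v_{2-i} & i \text{ odd}.\end{cases}
\]
Checking $\mu^2 = \mathrm{id}$ is immediate, so $\mu$ is a bijection. I would then verify $\mu \in \Aut(\G)$ by running through the six edge types, each split by the parity of the index of its $u$-endpoint; the required congruences $2k \equiv -2 \pmod{4m}$ and $mk \equiv m \pmod{4m}$ (where $k = 2m-1$) handle the non-obvious cases. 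The crucial outcome is that $\mu$ sends every rim (resp.\ hub) edge to an $m$- or $(m+2)$-spoke and conversely sends $m$- and $(m+2)$-spokes to rim and hub edges, collapsing the four $\langle \rho, \tau \rangle$-orbits into a single one. By Lemma~\ref{le:c=a+b} this gives arc-transitivity.

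To pin down the stabilizer, I would set $G = \langle \rho, \tau, \mu\rangle$ and argue that $\mu \rho \mu$ is an automorphism swapping the two $\rho$-orbits, so that $H := \langle \rho, \mu \rho \mu \rangle$ has order $8m$ and acts regularly on $V(\G)$. Both $\tau$ and $\mu$ normalize $H$ (via short computations on $\rho$ and $\mu\rho\mu$). A direct calculation yields $(\tau\mu)^3 = \mathrm{id}$, so $\langle \tau, \mu\rangle \cong S_3$; since $H$ acts regularly on vertices, one checks that no nontrivial element of $\langle \tau, \mu\rangle$ lies in $H$, whence $|G| = |H| \cdot 6 = 48m$ and the $G$-stabilizer of $u_0$ has order exactly $6$.

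The hardest step will be the matching upper bound, i.e.\ showing $\Aut(\G) = G$. For this I would prove that any $\psi \in \Aut(\G)$ fixing both $u_0$ and $u_1$ must be trivial. A direct count shows that among the other five neighbors of $u_0$, the vertex $u_{-1}$ is the unique one sharing exactly three common neighbors with $u_1$ (namely $u_0$, $v_1$ and $v_{m+1}$), while the pair $\{v_0, v_m\}$ contributes one and the pair $\{v_2, v_{m+2}\}$ contributes two each. Hence $\psi$ fixes $u_{-1}$, and iterating the same argument for the consecutive rim pairs $(u_{-1}, u_0), (u_{-2}, u_{-1}), \ldots$ forces $\psi$ to fix every $u_i$. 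Since the set $\{u_j, u_{j-2}, u_{j-m}, u_{j-m-2}\}$ has no nontrivial translational symmetry in $\ZZ_{4m}$ for odd $m \geq 3$, each $v_j$ is the unique vertex with that $u$-neighborhood, so $\psi$ fixes every $v_j$ as well and $\psi = \mathrm{id}$. Combined with the previous paragraph, this establishes $|\Aut(\G)| = 48m$ and the vertex stabilizer of order exactly $6$.
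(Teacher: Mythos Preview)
Your proposal is correct and follows the same overall strategy as the paper: invoke Lemma~\ref{le:c=a+b}, supply one further explicit automorphism to obtain edge-transitivity (hence arc-transitivity), and then prove that the pointwise stabilizer of the arc $(u_0,u_1)$ in $\Aut(\G)$ is trivial. The details differ in two places. First, your extra automorphism $\mu$ swaps the $u$- and $v$-orbits on even indices and sends rim/hub edges to $m$- and $(m+2)$-spokes, whereas the paper's $\varphi$ swaps the orbits on odd indices and sends rim edges to $0$- and $2$-spokes; both choices work and one checks readily that $\langle\rho,\tau,\mu\rangle$ and $\langle\rho,\tau,\varphi\rangle$ are edge-transitive. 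Second, for the upper bound on the stabilizer you reuse the common-neighbour argument of Lemma~\ref{le:fam1} (here $u_{-1}$ is the unique neighbour of $u_0$ with exactly three common neighbours with $u_1$, and the set $\{0,-2,-m,-m-2\}$ has no nontrivial translation symmetry in $\ZZ_{4m}$), while the paper instead counts $4$-cycles through $u_0u_1$; both arguments are short and valid.

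One remark: your middle paragraph, where you compute $|G|=48m$ by analysing $H=\langle\rho,\mu\rho\mu\rangle$ and showing $\langle\tau,\mu\rangle\cong S_3$, is correct (indeed $\sigma=\mu\rho\mu$ satisfies $\sigma^2=\rho^{-2}$ and $\sigma\rho\sigma^{-1}=\rho^{2m+1}$, so $|H|=8m$) but entirely superfluous. Once arc-transitivity is established, the orbit--stabilizer theorem already gives $|G_{u_0}|\ge 6$, and your final paragraph gives $|\Aut(\G)_{u_0}|\le 6$; nothing further is needed.
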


\begin{proof}
We only give a sketch of the proof and leave the details to the reader. Lemma~\ref{le:c=a+b} implies that $\tau$ from (\ref{eq:tau}) is an automorphism of $\G = \N(4m;2,m,m+2;2m-1)$. It can be verified that the permutation $\varphi$ of the vertex set of $\G$ given by the rule 
$$
	u_i\varphi = \left\{\begin{array}{lcl}u_{-i} & ; & i \mathrm{\ even}\\
				v_{-i+1} & ; & i \mathrm{\ odd},\end{array}\right.\quad \quad
	v_i\varphi = \left\{\begin{array}{lcl}u_{-i+1} & ; & i \mathrm{\ even}\\
				v_{-i+2m+2} & ; & i \mathrm{\ odd},\end{array}\right.
$$
where $i \in \ZZ_n$,
is an automorphism of $\G$ and that the group $\langle \rho, \varphi \rangle$ has just two orbits on the set of all edges of $\G$, one of which coincides with the set of all $m$- and all $(m+2)$-spokes. The nature of the action of $\tau$ thus yields that the group $G = \langle \rho, \varphi, \tau \rangle$ acts arc-transitively on $\G$. It remains to be proved that the pointwise stabilizer of the arc $(u_0,u_1)$ is trivial. To see this we first verify that there are four $4$-cycles through $u_0u_1$ in $\G$, one through $(v_2,u_0,u_1)$, one through $(v_{m+2},u_0,u_1)$, and two through $(u_{-1},u_0,u_1)$. Any automorphism $\psi$, fixing both $u_0$ and $u_1$ must thus also fix $u_{-1}$, and then an inductive argument shows that $\psi$ fixes each $u_i$. Since the neighbors of $v_i$ of the form $u_j$ are $u_i$, $u_{i-2}$, $u_{i-m}$ and $u_{i-m-2}$ which are all fixed, $v_i$ has to be mapped to a common neighbor of these four vertices, which is easily seen to be just $v_i$, and so each $v_i$ must also be fixed by $\psi$, implying that $\psi$ is the identity.
\end{proof}

The classification of all edge-transitive Nest graphs seems to be a rather difficult problem. One could try to build on the general method that was used to classify all edge-transitive Rose Window graphs~\cite{KovKutMar10} and all edge-transitive Taba\v cjn graphs~\cite{ArrHubKutOreSpa15}. The key idea in those two classifications is to first bound the vertex stabilizers of such graphs, then apply a nice result of Lucchini from 1998 (see for instance~\cite[Theorem 2.20]{Isa_book}) to conclude that for large enough graphs the subgroup $H$, generated by the corresponding semiregular automorphism with two orbits, has nontrivial core in the full automorphism group of the graph. One then identifies the (small) graphs for which $H$ does have trivial core in the automorphism group after which all edge-transitive cyclic covers of these core-free graphs need to be classified. In addition, the possibility that the quotient graph with respect to the core of $H$ might reduce the valence of the graph needs to be considered (which of course cannot occur in the case of $5$-valent graphs). Even though this approach might still work for Nest graphs, there are some serious obstacles. First of all, unlike the case of valence $5$ there is no theoretical result giving a general bound for the order of vertex-stabilizers of a $6$-valent vertex- and edge-transitive graph while we have not been able to find an easy combinatorial argument to bound the orders of stabilizers of Nest graphs. Second, one can check that the subgroup $\langle \rho \rangle$ is core-free in the full automorphism group for all four of the graphs $\N(5;1,2,3;2)$, $\N(8;1,3,4;3)$, $\N(8;1,2,5;3)$ and $\N(12;2,4,8;5)$, while for most of the other small examples the core is of index $2$ in $\langle \rho \rangle$. Thus, if one would want to use the above mentioned method, all edge-transitive cyclic covers of the above mentioned four graphs would need to be determined as well as all edge-transitive cyclic covers of the `doubled' complete graph $K_4$ (that is the multigraph with $4$ vertices and a pair of parallel edges joining each pair of different vertices). 

Instead of trying to overcome these difficulties, we decided to refrain from attempting to classify all edge-transitive Nest graphs and to show instead that there are infinite subfamilies of such graphs (arc-transitive, as well as half-arc-transitive and with very interesting properties on their own) and to direct the focus to Question~\ref{que:main}. Furthermore, using mostly combinatorial methods, we at least classify the edge-transitive Nest graphs of girth $3$. We state the obtained classification in the following theorem. As it turns out, even obtaining the classification with this additional restriction is not trivial. Its proof is given in the next section.

\begin{theorem}
\label{the:girth3}
Let $\G$ be a Nest graph of girth $3$. Then $\G$ is edge-transitive if and only if one of the following holds:
\begin{itemize}\itemsep = 0pt
\item[(i)] $\G$ is isomorphic to one of the graphs $\N(4;1,2,3;1)$, $\N(5;1,2,3;2)$, $\N(8;1,2,5;3)$, $\N(8;1,3,4;3)$, $\N(10;1,3,4;3)$ or $\N(12;1,3,10;5)$. 
\item[(ii)] $\G \cong \N(n;1,2m+1,2m+2;1)$, where $m\geq 1$ and $n$ is an even divisor of $2(m^2+m+1)$ with $n \geq 4m+2$.
\item[(iii)] $\G \cong \N(2m;1,b,b+m+1;m-1)$, where $b = 4b_0-1$ for some $b_0 > 1$ and $m$ is a divisor of $b^2+3$ with $m \equiv 2 \pmod{4}$ and $b < 2m$.
\end{itemize}
Furthermore, $\G$ is half-arc-transitive if (iii) holds and is arc-transitive otherwise. 
\end{theorem}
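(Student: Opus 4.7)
The plan is to exploit the fact that edge-transitivity forces each edge to lie in the same number of triangles. I first classify the triangles of a Nest graph $\N(n;a,b,c;k)$ by the types of their vertices: since $n\geq 4$, no $uuu$-triangle exists, and writing $S=\{0,a,b,c\}$, one checks directly that a $uuv$-triangle through a rim edge exists iff some pair in $S$ differs by $1$, a $uvv$-triangle through a hub edge exists iff some pair in $S$ differs by $k$, and a $vvv$-triangle exists iff $3k\equiv 0\pmod{n}$. Let $t$ denote the common number of triangles through any edge. Counting triangle--edge incidences (each $uuv$-triangle meets one rim edge and two spokes, each $uvv$-triangle meets one hub edge and two spokes, each $vvv$-triangle meets three hub edges) yields a small linear system whose unique solution forces $\#vvv=0$ (hence $3k\not\equiv 0\pmod{n}$) and $\#uuv=\#uvv$. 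In particular $|S\cap(S+1)|=|S\cap(S+k)|=t$; and by decomposing the triangles through a fixed spoke $u_0v_s$ according to whether the third vertex is of type $u$ or of type $v$, one obtains the local constraint $|\{s-1,s+1\}\cap S|+|\{s-k,s+k\}\cap S|=t$ for every $s\in S$.

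Because rim edges must lie in triangles, some pair in $S$ differs by $1$, and by the minimality convention on $a$ we may assume $a=1$, so $S=\{0,1,b,c\}$. The analysis then splits on $t_r\in\{1,2,3,4\}$. The values $t_r\in\{3,4\}$ force $n$ and $S$ into a very small set of possibilities and yield only the sporadic graphs $\N(4;1,2,3;1)$ and $\N(5;1,2,3;2)$. For $t_r=2$, the set $S$ is (up to the iso $i\mapsto -i$) either the \emph{split} shape $\{0,1,x,x+1\}$ with $3\leq x\leq n-3$ or the \emph{run-of-three} shape $\{0,1,2,c\}$ with $4\leq c\leq n-2$, and the local constraints at the four elements of $S$, together with $|S\cap(S+k)|=2$, pin down the admissible $k$ to a short list; each surviving possibility corresponds either to the infinite family (ii) (with $x=2m+1$ and $k=1$) or to one of the sporadic graphs $\N(8;1,3,4;3)$, $\N(10;1,3,4;3)$, $\N(8;1,2,5;3)$. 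The case $t_r=1$, namely $S=\{0,1,b,c\}$ with $b\geq 3$, $c-b\geq 2$ and $c\leq n-2$, is treated analogously: the local conditions constrain the unique $k$-difference in $S$ to one of a few shapes, and the resulting configurations lead to family (iii) together with the sporadic $\N(12;1,3,10;5)$.

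The counting conditions above are necessary but not sufficient for edge-transitivity (as illustrated by $\N(12;1,3,4;3)$, which passes the local test but is not edge-transitive); the precise arithmetic conditions in (ii) and (iii) emerge only when one additionally demands an explicit orbit-merging automorphism analogous to the map $\varphi$ constructed in the proof of Lemma~\ref{le:fam1}. One shows that well-definedness of such a swap automorphism on $\ZZ_n$ is equivalent to the divisibility $n\mid 2(m^2+m+1)$ in case (ii) and to $m\mid b^2+3$ (together with the parity congruences $b\equiv -1\pmod{4}$ and $m\equiv 2\pmod{4}$ arising from closure conditions on parities of indices) in case (iii). In case (ii) we have $c=a+b$, so Lemma~\ref{le:c=a+b} upgrades edge-transitivity to arc-transitivity. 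In case (iii), where $c\neq a+b$, Lemma~\ref{le:c=a+b} does not apply and one verifies directly that no automorphism interchanges a pair of adjacent vertices, so the graph is half-arc-transitive. The hardest step, I expect, is this last one: translating the local triangle-count conditions into the precise closure condition on a candidate swap automorphism and extracting the number-theoretic relations $n\mid 2(m^2+m+1)$ and $m\mid b^2+3$ via careful index-tracking through two different parametrizations of the relevant spoke permutations.
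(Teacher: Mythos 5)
Your counting framework (the triangle--edge incidence count forcing no $vvv$-triangles, the identity $\lambda=|S\cap(S+1)|=|S\cap(S+k)|$, and the local constraint at each spoke) is sound and matches the paper's opening move, which splits the proof according to $\lambda\in\{1,2,3,4\}$; the cases $\lambda\in\{3,4\}$ are indeed quick and give $\N(4;1,2,3;1)$ and $\N(5;1,2,3;2)$. The genuine gap is in how you propose to extract the arithmetic conditions in (ii) and (iii). You correctly observe that the counting conditions are only necessary, but your remedy --- ``additionally demand an explicit orbit-merging automorphism analogous to $\varphi$'' and read off $n\mid 2(m^2+m+1)$ resp.\ $m\mid b^2+3$ from its well-definedness --- only proves the \emph{if} direction. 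For the \emph{only if} direction you must show that edge-transitivity forces the existence of an automorphism of that prescribed shape, and nothing in your outline does this: a priori an automorphism merging the edge orbits of $\langle\rho\rangle$ could act in many ways not of the assumed ``swap'' form. The paper spends most of Section~4 closing exactly this hole: in the $\lambda=2$, $k=1$ case it introduces the $s$-walks built from the local structures and shows any automorphism carrying $(u_0,u_1)$ to $(u_0,v_0)$ or $(u_0,v_1)$ must map the rim $n$-cycle onto another $s$-walk, which is what forces $\langle b\pm1\rangle=\langle 2\rangle$ and $2(m^2+m+1)\equiv 0 \pmod n$; and in the $\lambda=1$ case it first eliminates $4$-cycles (whence the sporadic $\N(12;1,3,10;5)$, which your triangle-count analysis alone cannot produce), analyzes the $5$-cycles, proves the action is regular on arcs/edges and that the graphs are half-arc-transitive with vertex stabilizers of order $3$, and only then pins down $k$, the parity conditions $b\equiv 3$, $m\equiv 2\pmod 4$ and $m\mid b^2+3$ from the forced order-$3$ automorphism. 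None of this is replaced by your local triangle constraints, which in the $\lambda=1$ case give little more than $c-b=\pm k$, and in the split case $\lambda=2$ do not even eliminate $k=b$ (the paper needs a separate local-structure argument to reduce that to the two sporadic graphs of orders $16$ and $20$).

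A second, related gap is the final sentence of the theorem. You assert that in case (iii) ``one verifies directly that no automorphism interchanges a pair of adjacent vertices.'' This is not a verification one can do directly from the parameters: it requires controlling the full automorphism group, and it is one of the hardest steps in the paper (its Lemma on half-arc-transitivity rules out arc-transitivity by a delicate case analysis of where a hypothetical arc-reversing automorphism could send $u_k$, using the prior results that the arc stabilizer is trivial and that certain $2$-paths lie on exactly two $5$-cycles). Likewise, arc-transitivity in case (ii) follows cheaply from $c=a+b$ via Lemma~\ref{le:c=a+b} only once edge-transitivity of those graphs is actually established by an explicit automorphism, which your proposal gestures at but does not construct. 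So while your skeleton agrees with the paper's, the proposal as written does not contain the structural arguments (no-$4$-cycle lemma, $5$-cycle and $s$-walk analysis, stabilizer computations) that make the necessity direction and the AT/HAT dichotomy go through.
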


\section{Edge-transitive Nest graphs of girth $3$}
\label{sec:ETNestG3}

Throughout this section let $\G = \N(n;a,b,c;k)$ be an edge-transitive Nest graph of girth $3$. Without loss of generality we may assume that $1 \leq a < b < c$. Observe that the edge-transitivity implies that each edge of $\G$ lies on the same number $\lambda = \lambda(\G)$ of $3$-cycles of $\G$. Since $n \geq 4$, each $3$-cycle of $\G$ containing the edge $u_0u_1$ consists of the edge $u_0u_1$ and two spokes, and so the number $\lambda$ is equal to the number of occurrences of the number $1$ in the list $[a, b-a, c-b, n-c]$. Thus $1 \leq \lambda \leq 4$. We first deal with the two easy cases when $\lambda \geq 3$.

\begin{lemma}
\label{le:lambda>2}
Let $\G = \N(n;a,b,c;k)$ be edge-transitive of girth $3$. Then $\lambda = 4$ if and only if $\G \cong \N(4;1,2,3;1) \cong K_{2,2,2,2} \cong K_8 - 4K_2$ (the complete graph on $8$ vertices minus a perfect matching), and $\lambda = 3$ if and only if $\G \cong \N(5;1,2,3;2)$ (the complement of the Petersen graph).
\end{lemma}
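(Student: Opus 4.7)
The list $L = [a,\, b-a,\, c-b,\, n-c]$ consists of positive integers summing to $n$, and, as noted in the paragraph preceding the lemma, $\lambda$ is the number of $1$s in $L$. The plan is to split on the value of $\lambda \in \{3,4\}$, pin down the admissible parameters $(n;a,b,c;k)$ using the normalizations of Lemma~\ref{le:isom}, and then verify both directions of the ``iff'' claim.

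For $\lambda = 4$ the analysis is immediate: all four entries of $L$ equal $1$, forcing $(n,a,b,c) = (4,1,2,3)$, and the normalization $k < n/2 = 2$ forces $k = 1$. So the only candidate is $\N(4;1,2,3;1)$, and I would verify directly from the construction that each vertex has a unique non-neighbor (namely $u_{i+2}$ for $u_i$ and $v_{j+2}$ for $v_j$), whence $\G \cong K_8 - 4K_2 \cong K_{2,2,2,2}$. Since $K_{2,2,2,2}$ is arc-transitive and each of its edges lies in exactly $4$ triangles, this handles both directions of the $\lambda = 4$ case.

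For $\lambda = 3$ the normalization forces $a = 1$, and then the three a priori subcases (depending on which of $b-a$, $c-b$, $n-c$ differs from $1$) all yield the same multiset of consecutive differences around $\ZZ_n$ of the spoke offsets $\{0,a,b,c\}$, namely $\{1,1,1,n-3\}$. Since the reflection and rotation isomorphisms provided by Lemma~\ref{le:isom} act on these offset sets exactly by circular inversion and translation, they merge the three subcases into a single isomorphism class, and I may assume $(a,b,c) = (1,2,3)$. It then remains to determine $n \ge 5$ and $k$ from the edge-transitivity requirement.

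I would next count triangles through the $0$-spoke $u_0 v_0$: third vertices of the form $u_i$ give exactly one triangle (namely $i = -1$, using $n \ge 5$), while third vertices of the form $v_j$ contribute $|\{1,2,3\} \cap \{k,n-k\}|$ triangles. Setting this total equal to $\lambda = 3$ forces $\{k,n-k\} \subseteq \{1,2,3\}$, whose unique solution with $n \ge 5$ and $k < n/2$ is $n = 5$, $k = 2$. To close the argument I would verify that $\N(5;1,2,3;2)$ is actually edge-transitive (since $c = a + b$ in $\ZZ_5$, Lemma~\ref{le:c=a+b} supplies the involution $\tau$, and together with $\rho$ one checks that all six $\rho$-orbits on edges fuse) and identify it with the complement of the Petersen graph by direct inspection: in $\overline{\G}$ each of the two $\rho$-orbits induces a $5$-cycle and the remaining five edges form a perfect matching realizing $\mathrm{GP}(5,2)$. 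The main obstacle I anticipate is the bookkeeping of the $\lambda = 3$ subcases together with the correct application of the reflection/rotation isomorphisms from Lemma~\ref{le:isom}; packaging them through the circular gap multiset avoids an otherwise tedious case-by-case check, while the remaining verifications are small finite computations.
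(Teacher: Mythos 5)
Your proposal is correct and follows essentially the same route as the paper: read $\lambda$ off the gap list $[a,b-a,c-b,n-c]$, normalize via Lemma~\ref{le:isom} to $(a,b,c)=(1,2,3)$, and for $\lambda=3$ count the triangles through the $0$-spoke $u_0v_0$ to force $\{k,n-k\}\subseteq\{1,2,3\}$, hence $n=5$, $k=2$; the $\lambda=4$ case and the identifications with $K_{2,2,2,2}$ and with the Petersen complement are exactly the paper's argument. One parenthetical claim is false, however: for $\N(5;1,2,3;2)$ the group $\langle\rho,\tau\rangle$ is dihedral of order $10$ and does \emph{not} fuse all six $\rho$-orbits of edges, since $\tau$ preserves the rim and hub orbits setwise and merely swaps the $0$-spokes with the $3$-spokes and the $1$-spokes with the $2$-spokes, leaving four edge orbits. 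This does not harm the proof, because your identification of $\G$ with the complement of the Petersen graph (whose automorphism group is transitive on nonadjacent pairs) already yields edge-transitivity --- which is precisely the justification the paper itself uses --- so simply drop or repair the claim about $\langle\rho,\tau\rangle$.
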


\begin{proof}
By the above remarks $a = b-a = c-b = n-c = 1$ must hold for $\lambda = 4$ to hold. It is now clear that $\G = \N(4;1,2,3;1)$ (replacing $k$ by $n-k$ if necessary). Since the graph $\N(4;1,2,3;1) \cong K_{2,2,2,2}$ is clearly edge-transitive, this proves the first part of the lemma.

Suppose now that $\lambda = 3$. By Lemma~\ref{le:isom} we can assume $a = 1$, $b = 2$ and $c = 3$ with $n \geq 5$. None of the neighbors $u_{-2}$ and $u_{-3}$ of $v_0$ is a neighbor of $u_0$ (recall that $n \geq 5$), and so the spoke $u_0v_0$ can only be a part of three $3$-cycles if both $v_k$ and $v_{-k}$ are neighbors of $u_0$ and are thus contained in $\{v_1, v_2, v_3\}$. Since $k \neq -k$ this clearly implies that $n = 5$ and $k \in \{2,3\}$. Thus $\G \cong \N(5;1,2,3;2)$, which is clearly edge-transitive being the complement of the Petersen graph.
\end{proof}

The remaining cases $\lambda = 2$ and $\lambda = 1$ require more detailed consideration. We deal with each of these cases in a separate subsection. 

\subsection{The case $\lambda = 2$}
\label{subsec:lambda=2}

Throughout this subsection we assume $\G = \N(n;a,b,c;k)$ is an edge-transitive Nest graph of girth $3$ with $\lambda = 2$. Observe that in this case for any pair of adjacent vertices $x$ and $y$ the vertex $x$ has three neighbors which are not neighbors of $y$ nor are $y$ itself, with the same holding for $y$. In some of the arguments it will prove useful to consider the subgraph of $\G$ induced on these six vertices, that is on $(\G(x) \cup \G(y))\setminus (\{x,y\} \cup (\G(x) \cap \G(y)))$. We call this subgraph the {\em local structure} with respect to the edge $xy$ and denote it by $\G_{xy}$. Moreover, the subgraph of $\G_{xy}$ induced on the three neighbors of $x$ contained in $\G_{xy}$, will be denoted by $\G_{xy}^x$.

Lemma~\ref{le:isom} and the remarks at the beginning of Section~\ref{sec:ETNestG3} imply that we have two essentially different possibilities. Either $a = 1$, $b = 2$ and $4 \leq c \leq n-2$, or $a = 1$ and $3 \leq b = c - 1 \leq n - 3$. We first prove that there is only one graph satisfying the first possibility.

\begin{lemma}
\label{le:lambda2_1}
The graph $\G = \N(n;1,2,c;k)$ is edge-transitive with $\lambda = 2$ if and only if $\G \cong \N(8;1,2,5;3)$ which is in fact arc-transitive.
\end{lemma}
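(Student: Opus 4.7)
My plan is to progressively restrict the parameters $(n,c,k)$ of $\G = \N(n;1,2,c;k)$ by repeatedly enforcing $\lambda = 2$ on well-chosen edges, and then to eliminate the residual apparent infinite family by counting $4$-cycles through two particular edges. The hypotheses give $4 \le c \le n-2$, and by Lemma~\ref{le:isom} I may assume $k \le n/2$.

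Examining the common neighbours of $u_0$ and $v_0$, the vertex $u_{-1}$ is always common, and each of $v_1, v_2, v_c$ is an additional common neighbour iff $k \in \{\pm 1\}$, $\{\pm 2\}$, or $\{\pm c\}$ in $\ZZ_n$, respectively. So $\lambda = 2$ forces $k \in \{1, 2, \min(c, n-c)\}$. The subcase $k = 1$ is ruled out at once because the $1$-spoke $u_0 v_1$ then acquires four common neighbours ($u_1, u_{-1}, v_0, v_2$); the subcase $k = 2$ is ruled out by a joint inspection of the $2$-spoke $u_0 v_2$ and the $c$-spoke $u_0 v_c$, which between them violate $\lambda = 2$ for every admissible $c$ (the only coincident case $c = 4 = n-2$ yielding $\N(6;1,2,4;2)$, whose $2$-spoke still carries three triangles). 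So $k \in \{c, n-c\}$, and a further triangle count on the $2$-spoke then forces $c = n/2 + 1$, giving $\G = \N(2m;1,2,m+1;m-1)$ with $n = 2m$. A last triangle count on the hub edge $v_0 v_{m+1}$ kills $m = 3$ (where the hub has $\lambda = 4$), so $m \ge 4$ remains.

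The main obstacle is now that the triangle counts are all equal to $2$ for every $m \ge 4$, so a finer invariant is required. I will compare the number of $4$-cycles through the rim $u_0 u_1$ and through the $1$-spoke $u_0 v_1$. A systematic enumeration of pairs $(x,y)$ with $x$ a neighbour of one endpoint, $y$ a neighbour of the other, $x \sim y$ and $x \neq y$, shows that $u_0 v_1$ always lies on exactly $5$ such $4$-cycles (coming from $(u_1, v_{m+2})$, $(u_{-1}, v_m)$, $(v_0, u_{-1})$, $(v_2, u_1)$, $(v_{m+1}, u_{-m})$), whereas $u_0 u_1$ lies on only $4$ in general (from $(u_{-1}, v_1)$, $(v_2, u_2)$, $(v_{m+1}, v_2)$, $(v_1, v_{m+2})$), except when the hub shift $k$ equals $3$---that is, when $m = 4$---in which case the additional pair $(v_0, v_3)$ brings the count up to $5$. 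Edge-transitivity forces the two counts to agree, so $m = 4$ and $\G \cong \N(8;1,2,5;3)$.

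For the converse, I will verify that $\N(8;1,2,5;3)$ is arc-transitive either by exhibiting explicit automorphisms (the translation $\rho$ of (\ref{eq:rho}), the involution $\sigma\colon u_i \mapsto u_{-i},\, v_i \mapsto v_{-i+2}$, the bipartition-swap $\varphi\colon u_i \mapsto v_{3i},\, v_i \mapsto u_{3i}$, together with one further involution enlarging the stabiliser of $u_0$) and checking that the group they generate acts arc-transitively, or by identifying $\G$ with the Shrikhande graph---a classical strongly regular $(16,6,2,2)$-graph known to be arc-transitive with automorphism group of order $192$ and vertex stabiliser of order $12$, in agreement with Table~\ref{tab:all}.
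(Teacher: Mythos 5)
Your proof is correct and takes essentially the same route as the paper's: successive $\lambda=2$ counts on well-chosen spokes funnel the parameters into the same one-parameter family $\N(2m;1,2,m+1;m-1)$ (the paper extracts $2c\in\{1,2,3\}$ from the $c$-spoke and $k\in\{2,-c\}$ from the $0$-spoke, you extract $k\in\{1,2,\pm c\}$ from the $0$-spoke and $2c\equiv 2$ from the $2$-spoke, but this is just a reshuffling), and the finish compares the $1$-spoke with the rim edge exactly as the paper does. The only difference is cosmetic: where the paper compares the local structures $\G_{u_0v_1}$ and $\G_{u_0u_1}$ (the adjacency $u_m\sim v_{m+1}$ forcing $v_0\sim v_3$, hence $k=3$ and $n=8$), you count all $4$-cycles through the two edges ($5$ versus $4$ unless $m=4$), which encodes the same adjacencies, and your verification that $\N(8;1,2,5;3)$ is arc-transitive is at the same level of detail as the paper's ``easy to verify''.
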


\begin{proof}
It is easy to verify that $\N(8;1,2,5;3)$ is indeed arc-transitive with $\lambda = 2$. For the converse suppose that $\G = \N(n;1,2,c;k)$ is edge-transitive with $\lambda = 2$. Observe that in this case $k > 1$ since otherwise the edge $u_0v_1$ would lie on at least three $3$-cycles. Moreover, for the edge $u_0u_1$ to lie on two $3$-cycles, $3 < c < n-1$ has to hold, implying $n \geq 6$. Thus $\{c,c-1,c-2\} \cap \{1,-1\} = \emptyset$, and so for the edge $u_0v_c$ to lie on two $3$-cycles both $v_{c+k}$ and $v_{c-k}$ must be neighbors of $u_0$. It follows that $c+k$ and $c-k$ are two different members of the set $\{0,1,2\}$, implying that $2c \in \{1,2,3\}$ and $2k \in \{\pm 1, \pm 2\}$. In fact, since we can assume $k < n/2$, this implies $2k \in \{-1, -2\}$. 

A similar consideration of the possible $3$-cycles containing $u_0v_0$ reveals that precisely one of the vertices $v_k, v_{-k}$ must be adjacent to $u_0$ and is thus contained in $\{v_1, v_2, v_c\}$. Since $k > 1$ we either have $k = 2$ or $-k = c$ (recall that $k < n/2$, while since $2c \in \{1,2,3\}$ we get $c > n/2$). If $k = 2$, then the fact that $n \geq 6$ and $2k \in \{-1,-2\}$ yields $n = 6$ and $c = 4$, but then the edge $u_0v_0$ lies on three $3$-cycles, a contradiction. Thus $k \neq 2$ (implying $n \geq 7$), and so $k = -c$. Now, if $n$ is odd then $k = (n-1)/2$ and $c = (n+1)/2$, but then $k > 2$ implies that the edge $u_0v_2$ lies on just one $3$-cycle, a contradiction. It follows that $n$ is even, say $n = 2n_0$, and $k = n_0-1$, $c = n_0+1$. 
Recall that this implies $n \geq 8$ and consider the local structure $\G_{u_0v_1}$. Since $k \geq 3$ the subgraph $\G_{u_0v_1}^{u_0}$ has one vertex of valence $2$ (namely $v_{n_0+1}$) and two vertices of valence $1$ (namely $v_0$ and $v_2$). Similarly the vertex $u_{n_0}$ is the unique vertex of $\G_{u_0v_1}^{v_1}$ of valence $2$ and moreover, $u_{n_0} \sim v_{n_0+1}$. Now, edge-transitivity of $\G$ implies that the same situation must occur in the local structure $\G_{u_0u_1}$. Here the two vertices of valence $2$ in $\G_{u_0u_1}^{u_0}$ and $\G_{u_0u_1}^{u_1}$ are $v_0$ and $v_3$, respectively, and consequently $v_0 \sim v_3$. This implies $k = 3$, and so $n = 8$. Thus $\G \cong \N(8;1,2,5;3)$, as claimed.
\end{proof}

We now classify the edge-transitive Nest graphs $\N(n;a,b,c;k)$ with $a = 1$ and $3 \leq b = c-1 \leq n-3$. We first show that, except for two small exceptions, $k = 1$ must hold in this case.

\begin{lemma}
\label{le:lambda2_2}
Let $\G = \N(n;1,b,b+1;k)$ be edge-transitive with $3 \leq b \leq n-3$ and $k < n/2$. Then either $k=1$ or $\G$ is isomorphic to one of the graphs $\N(8;1,3,4;3)$ and $\N(10;1,3,4;3)$ which are both arc-transitive.
\end{lemma}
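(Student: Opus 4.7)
The plan is to proceed in three stages. First, narrow down the possible values of $k$ by triangle counting; second, use Lemma~\ref{le:isom} to reduce modulo obvious symmetries to the case $k=b$ with $b<n/2$ (the case $k=1$ is the conclusion itself); and third, compare the sizes of the local structures $\G_{u_0u_1}$ and $\G_{u_0v_0}$ to pin down $n$ and $b$.

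For the first stage I would fix the $0$-spoke $u_0v_0$ and use $\lambda=2$. The vertex $u_{-1}$ is always a common neighbor of $u_0$ and $v_0$, so exactly one further common neighbor is required. It must be some $v_j$ with $j\in\{1,b,b+1\}$ adjacent to $v_0$ through the hub, forcing $k\in\{1,b,b+1,n-b,n-b-1\}$. Running the same argument on the $1$-spoke $u_0v_1$ (where $u_1$ is the persistent common neighbor) yields $k\in\{1,b-1,b,n-b,n-b+1\}$. Intersecting, $k\in\{1,b,n-b\}$.

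For the second stage, two applications of the $(a,b,c)\to(-a,b-a,c-a)$ operation in Lemma~\ref{le:isom}, combined with the appropriate reordering of the spoke parameters, produce the isomorphism $\N(n;1,b,b+1;k) \cong \N(n;1,n-b,n-b+1;k)$, under which $k=n-b$ becomes $k=b'$ with $b'=n-b<n/2$ in the target graph. So it suffices to treat $k=1$ (the conclusion) and $k=b$ with $b<n/2$. A direct triangle count additionally rules out $n=2b+1$ in the latter case (both $v_b$ and $v_{b+1}$ would then be common neighbors of $u_0$ and $v_0$, forcing $\lambda\geq 3$), so $n\geq 2b+2$.

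Assume then $k=b$ with $n\geq 2b+2$. Since edges in the same $\Aut(\G)$-orbit have isomorphic local structures, edge-transitivity forces $|E(\G_{u_0u_1})|=|E(\G_{u_0v_0})|$. A careful enumeration shows that $\G_{u_0u_1}$ has six local vertices which split as $\{u_{-1},v_0,v_b\}$ and $\{u_2,v_2,v_{b+2}\}$, each side carrying a triangle (using the hub edge of step $b$), so $6$ edges appear generically. The only further adjacencies possible are the two $3$-spokes $u_{-1}v_2$ and $u_2v_b$ (when $b=3$) and the hub edge $v_0v_{b+2}$ (when $n=2b+2$). For $\G_{u_0v_0}$, the six local vertices split as $\{u_1,v_1,v_{b+1}\}$ and $\{u_{-b},u_{-b-1},v_{-b}\}$, each side again carrying a triangle, plus the forced $(b+1)$-spoke $v_1u_{-b}$, so $7$ edges appear generically. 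The only further adjacencies possible are the hub edge $v_{b+1}v_{-b}$ (when $n\mid 3b+1$) and the two extra spokes $u_1v_{-b}$, $v_{b+1}u_{-b-1}$ (when $n=2b+2$). Equating the two edge counts yields an elementary arithmetic constraint whose only solutions with $n\geq 2b+2$ are $(n,b)=(8,3)$ and $(n,b)=(10,3)$, giving the two claimed exceptional graphs, both of which Table~\ref{tab:all} confirms to be arc-transitive.

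I expect the main obstacle to be the exhaustive case analysis in the third stage: every pair of local vertices must be checked against the rim, the hub, and each of the four spoke types in $\ZZ_n$ to confirm that the three relations $b=3$, $n=2b+2$, and $n\mid 3b+1$ are the only ones capable of producing extra edges in the local structures. A secondary subtlety lies in the Stage~2 reduction, which requires chaining two applications of Lemma~\ref{le:isom} with a careful reordering of the parameters, together with the separate triangle count ruling out $n=2b+1$.
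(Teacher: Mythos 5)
Your overall strategy is the same as the paper's: use triangle counts on the spokes to constrain $k$, reduce by the isomorphisms of Lemma~\ref{le:isom}, and then compare the local structures $\G_{u_0u_1}$ and $\G_{u_0v_0}$ to force $b=3$ and $n\in\{8,10\}$; your Stage~3 enumeration of possible extra edges ($b=3$, $n=2b+2$, $n\mid 3b+1$) matches the paper's $\mu$-count argument and is correct. However, there is a concrete gap in Stage~1. The two lists are $\{1,b,b+1,n-b,n-b-1\}$ and $\{1,b-1,b,n-b,n-b+1\}$, and their intersection is $\{1,b,n-b\}$ only when $n\neq 2b$. If $n=2b$ then $n-b-1=b-1$ lies in both lists, and the value $k=b-1$ (with $b=n/2$, so $k<n/2$) genuinely survives both triangle-count screens: for the $0$-spoke the extra common neighbor of $u_0$ and $v_0$ is $v_{b+1}$, since $b+1\equiv -(b-1)\pmod{2b}$, and for the $1$-spoke it is $v_b$, since $b-1=k$. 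This case is neither $k=1$ nor of the form $k=b$ or $k=n-b$, so your Stage~2 reduction does not reach it, and nothing in your argument ever eliminates the family $\N(2m;1,m,m+1;m-1)$.

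The paper has to treat exactly this possibility separately (it is the case ``$n=2m$, $b=m$, $k=m-1$'' in its proof) and kills it by a local-structure comparison: in $\G_{u_0u_1}$ there are no edges between $\G_{u_0u_1}^{u_0}$ and $\G_{u_0u_1}^{u_1}$, whereas in $\G_{u_0v_0}$ the vertex $v_1$ is adjacent to $u_{-b}=u_m$, contradicting edge-transitivity. Your own Stage~3 machinery (comparing edge counts of the two local structures) would dispose of this case in the same way, but as written the case is simply dropped by the erroneous intersection claim, so the proof is incomplete until you add this analysis (for $n=2b=6$, i.e.\ $\N(6;1,3,4;2)$, a direct triangle count on the hub edge also works, but for $b\geq 4$ the simple counts do not suffice).
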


\begin{proof}
Observe that the assumptions on the parameters imply that the edge $u_0u_1$ lies on two $3$-cycles, so that $\lambda = 2$. Suppose $k > 1$ and let us prove that in this case $\G$ is isomorphic to one of the two graphs from the statement of the lemma. By Lemma~\ref{le:isom} we have $\G \cong \N(n;1,-b,-b+1)$, and so we can assume $b \leq n/2$. Thus, $b \geq 3$ implies $n \geq 6$. In fact, since the edge $v_0v_2$ lies on three $3$-cycles in the graph $\N(6;1,3,4;2)$, we have $n \geq 7$. 

Observe that $u_{-1}$ is a common neighbor of $u_0$ and $v_0$, and so $\lambda = 2$ implies that these two vertices have exactly one more common neighbor. Consequently, $|\{u_1,v_1,v_b,v_{b+1}\} \cap \{v_k,v_{-k},u_{-b}, u_{-b-1}\}| = 1$. Since $b \leq n/2$ and $n \geq 7$, we have that $1 \notin \{-b, -b-1\}$, and so $1 < k < n/2$ implies $|\{b,b+1\} \cap \{k,-k\}| = 1$. A similar argument considering the possibilities for the common neighbor of $u_0$ and $v_1$, other than $u_1$, yields $|\{b,b+1\} \cap \{1+k, 1-k\}| = 1$. 

We claim that $b = k$ must hold. It is easy to see that since $k < n/2$ and $b \leq n/2$ the only other possibility is that $n = 2m$ is even and $b = m$, $k = m-1$. To see that this is not possible consider the local structures $\G_{u_0u_1}$ and $\G_{u_0v_0}$. One can verify that there are no edges between the vertices of $\G_{u_0u_1}^{u_0}$ and $\G_{u_0u_1}^{u_1}$ (recall that $n \geq 7$). On the other hand $v_1 \in \G_{u_0v_0}^{u_0}$ is adjacent to $u_{m}\in \G_{u_0v_0}^{v_0}$, contradicting the edge-transitivity of $\G$.

We thus have $k = b$, that is $\G = \N(n;1,b,b+1;b)$. Consider the local structure $\G_{u_0u_1}$ and let $\mu$ be the number of edges between the vertices of $\G_{u_0u_1}^{u_0}$ and those of $\G_{u_0u_1}^{u_1}$. It is easy to see that the assumptions imply $\mu = 0$, except for the case when at least one of $b = 3$ (in which case $u_2 \sim v_b$ and $u_{-1} \sim v_{2}$) or $2b+2 = 0$ (in which case $v_0 \sim v_{b+2}$) holds. Thus $\mu \in \{0,1,2,3\}$. Since the vertex $u_{-b}$ of $\G_{u_0v_0}^{v_0}$ is adjacent to the vertex $v_1$ of $\G_{u_0v_0}^{u_0}$, edge-transitivity of $\G$ implies $\mu > 0$. Moreover, if $2b+2 = 0$ then the vertices $v_{-b}$ and $u_{-b-1}$ of $\G_{u_0v_0}^{v_0}$ are adjacent to $u_1$ and $v_{b+1}$ of $\G_{u_0v_0}^{u_0}$, respectively, implying that $\mu \geq 2$. This implies $b = 3$, and consequently $\mu = 3$ or $\mu = 2$, depending on whether $2b+2 = 0$ holds or not. If $2b+2 = 0$ then $n = 8$ and we get the graph $\N(8;1,3,4;3)$. If however $2b+2 \neq 0$ then the fact that there must be precisely two edges between the vertices of $\G_{u_0v_0}^{u_0}$ and $\G_{u_0v_0}^{v_0}$ implies that precisely one of the vertices $v_{-b}, u_{-b-1}$ must be adjacent to precisely one of $u_1, v_{b+1}$. Since $b = 3$, $n \geq 7$ and $2b+2 \neq 0$ this can only happen if $v_{-b} \sim v_{b+1}$ in which case $3b+1 = 0$, that is $n = 10$, and so $\G = \N(10;1,3,4;3)$. The arc-transitivity of the graphs $\N(8;1,3,4;3)$ and $\N(10;1,3,4;3)$ can easily be verified.
\end{proof}

In the next two lemmas we classify the edge-transitive Nest graphs of the form $\N(n;1,b,b+1;1)$ with $3 \leq b \leq n-3$. Recall that we can assume $b \leq n/2$. 

\begin{lemma}
\label{le:lambda2_3}
Let $\G = \N(n;1,b,b+1;1)$ be edge-transitive with $3 \leq b \leq n/2$. Then $b = 2m+1$ for some $m \geq 1$ and $n$ is an even divisor of $2(m^2+m+1)$. Moreover, $\G$ is arc-transitive and its automorphism group acts regularly on its arc-set, except for the graph $\N(6;1,3,4;1)$, which is arc-transitive with vertex stabilizers of order $12$.
\end{lemma}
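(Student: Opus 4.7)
The plan is to exploit the equivalence of edge- and arc-transitivity provided by Lemma~\ref{le:c=a+b} (which applies since $c=a+b$ here), and then to analyze the vertex stabilizer $\Aut(\G)_{u_0}$ via its action on the ``triangle graph'' $H_{u_0}$, the graph on $\G(u_0)$ in which two neighbours of $u_0$ are joined precisely when they are adjacent in $\G$. A direct count shows that, under the assumption $3\le b\le n/2$, the graph $H_{u_0}$ is the $6$-cycle
\[
u_{-1}-v_0-v_1-u_1-v_{b+1}-v_b-u_{-1},
\]
and that the natural map $\Aut(\G)_{u_0}\to\Aut(H_{u_0})=D_6$ is faithful: if $\psi\in\Aut(\G)_{u_0}$ fixes all six neighbours of $u_0$, then it fixes the three consecutive vertices $v_{b+1},u_0,v_1$ of the $6$-cycle $H_{u_1}$, hence all of $H_{u_1}$, and a rim induction concludes. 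Since $\tau$ acts on $H_{u_0}$ as the antipodal rotation by $3$, and the only transitive subgroups of $D_6$ containing $\tau$ are $C_6$ and $D_6$, there is an order-$6$ element $\phi\in\Aut(\G)_{u_0}$ with $\phi^3=\tau$, and (after replacing $\phi$ by $\phi^{-1}$ if necessary) we may assume $\phi(u_1)=v_1$.

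The crucial step is to read off $\phi$ inductively. The isomorphism $H_{u_1}\to H_{v_1}$ induced by $\phi$ is uniquely determined by $u_0\mapsto u_0$ and $v_1\mapsto v_0$, and it forces $\phi(u_2)=u_{1-b}$, $\phi(v_2)=u_{-b}$ and $\phi(v_{b+2})=v_2$. Propagating along the rim yields the closed formulas
\begin{align*}
\phi(u_{2i})&=u_{-2mi}, & \phi(u_{2i+1})&=v_{1-2mi},\\
\phi(v_{2i})&=u_{-1-2mi}, & \phi(v_{2i+1})&=v_{-2mi},
\end{align*}
where $b=2m+1$. The parity splitting immediately forces $n$ to be even and $b$ to be odd, and evaluating the even-$v$ formula at $i=m+1$ (which by $H_{u_0}$ must produce $\phi(v_{b+1})=u_1$) yields $-1-2m(m+1)\equiv 1\pmod n$, i.e.\ $n\mid 2(m^2+m+1)$. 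Conversely, under the hypotheses $b=2m+1$, $n$ even and $n\mid 2(m^2+m+1)$, the displayed formulas can be checked to define a genuine automorphism of $\G$: bijectivity reduces to $\gcd(m,n/2)=1$, which holds automatically since $n/2\mid m^2+m+1$ and any common prime of $m$ and $m^2+m+1$ would divide $1$; adjacency-preservation is a short case analysis over the six edge-types, with the $b$- and $(b+1)$-spokes handled by $2(m^2+m+1)\equiv 0\pmod n$. Together with $\rho$ and $\tau$, this $\phi$ yields arc-transitivity.

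The stabilizer count is then $\langle\phi\rangle\cong C_6\le\Aut(\G)_{u_0}\le D_6$. If the upper bound is attained, composing a reflection with a suitable power of $\phi$ produces a reflection $\sigma\in\Aut(\G)_{u_0}$ fixing both $u_{-1}$ and $u_1$ on $H_{u_0}$. A rim induction analogous to the faithfulness argument then forces $\sigma$ to be the map $u_i\mapsto u_i$, $v_i\mapsto v_{i+b}$, and preservation of the $b$-spokes by $\sigma$ requires $2b\equiv 0\pmod n$, so $n=2b$; combined with $n\mid 2(m^2+m+1)$ and $b=2m+1$ this gives $(2m+1)\mid 4(m^2+m+1)-(2m+1)^2=3$, leaving only $m=1$ and the exceptional graph $\N(6;1,3,4;1)$. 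The main obstacle in this plan is the inductive extension of $\phi$ to all $2n$ vertices: the parity bookkeeping is delicate, but becomes essentially mechanical once the anchor values dictated by $H_{u_0}$ have been pinned down.
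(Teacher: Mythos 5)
Your argument is correct, but it is organized around a different key gadget than the paper's proof. The paper works with the edge-local structure: for adjacent $x,y$ it shows each of $\G_{xy}^x$, $\G_{xy}^y$ is a $2$-path, defines the successor $s(x,y)$, and uses the resulting $s$-walks (the rim being one of them) to pin down automorphisms $\vartheta,\xi$ carrying the rim arc to the two spoke arcs; the congruence $2(m^2+m+1)\equiv 0$ then comes from computing $u_{1-b}\vartheta$ in two ways, the exceptional graph $\N(6;1,3,4;1)$ appears through the case $v_0\vartheta=u_1$, and arc-regularity comes from the fact that an arc-fixing automorphism fixes its $s$-walk pointwise. You instead work vertex-locally: the neighbourhood graph $H_{u_0}$ is a hexagon, the stabilizer embeds faithfully in $\Aut(H_{u_0})\cong D_6$, and since its image is transitive and contains the antipodal rotation induced by $\tau$ it contains a full rotation; propagating that single stabilizer element $\phi$ along the rim via hexagon isomorphisms (your anchor values agree with what I can verify: $\phi(u_2)=u_{1-b}$, $\phi(v_2)=u_{-b}$, $\phi(v_{b+2})=v_2$) yields the parity constraints and, via the anchor $\phi(v_{b+1})=u_1$, exactly the divisor condition. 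Your route buys a case-free derivation of the congruence and a structural explanation of the exception: it is precisely the graph whose stabilizer image is all of $D_6$ rather than $C_6$, with $2b\equiv 0$ forced by the extra reflection; the paper's route buys the $s$-walk formalism (which it reuses verbatim for the regularity claim) and the additional observation $n\equiv 2\pmod 4$. Two small things to tidy: state the closed formulas only after the parity step (as written they presuppose $b=2m+1$, and for indices beyond the anchor they already encode the divisor condition — your ``anchor'' framing is the honest version, so just order the deductions that way), and add the one-line verification that $u_i\mapsto u_i$, $v_i\mapsto v_{i+3}$ really is an automorphism of $\N(6;1,3,4;1)$, so that the exceptional stabilizer order $12$ is attained and not merely not excluded.
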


\begin{proof}
By assumption, $n \geq 6$. Consider now the local structure $\G_{u_0u_1}$. Since $b \neq 1$ the subgraph $\G_{u_0u_1}^{u_0}$ is the $2$-path $(v_0,u_{-1},v_b)$ and the subgraph $\G_{u_0u_1}^{u_1}$ is the $2$-path $(v_2,u_2,v_{b+2})$. Due to the edge-transitivity of $\G$, a similar situation occurs for any pair of adjacent vertices $x$ and $y$ of $\G$, that is, if $x \sim y$ then each of $\G_{xy}^x$ and $\G_{xy}^y$ is a $2$-path. We denote the internal vertex of the $2$-path $\G_{xy}^y$ by $s(x,y)$. Thus $s(u_0,u_1) = u_2$ and $s(u_1,u_0) = u_{-1}$. It is easy to verify that for all $i \in \ZZ_n$ the following hold:
\begin{equation}
\label{eq:s_lam_2}
\begin{array}{cc}
s(u_i,u_{i+1}) = u_{i+2},\ s(u_{i},u_{i-1}) = u_{i-2}, &  s(u_i,v_i) = u_{i-b-1},\ s(v_i,u_i) = v_{i+b+1},\\
s(u_i, v_{i+1}) = u_{i+1-b},\ s(v_{i},u_{i-1}) = v_{i+b-1}, & s(u_i,v_{i+b}) = u_{i+b-1},\ s(v_i, u_{i-b}) = v_{i-b+1},\\
s(u_i,v_{i+b+1}) = u_{i+b+1},\ s(v_i,u_{i-b-1}) = v_{i-b-1}, & s(v_i,v_{i+1}) = v_{i+2},\ s(v_i,v_{i-1}) = v_{i-2}.
\end{array}
\end{equation}

Observe that this gives rise to specific natural closed walks in $\G$ defined as follows. Starting from a given arc $(x,y)$ of $\G$ we then follow the vertices $s(x,y), s(y,s(x,y))$, etc. until we eventually come back to the arc $(x, y)$. We call the corresponding closed walk the {\em $s$-walk} containing $(x, y)$. In view of (\ref{eq:s_lam_2}), the $s$-walk containing $(u_0,u_1)$ is the $n$-cycle $(u_0,u_1,u_2, \ldots , u_{n-1})$. Since $c = b+1 = a+b$, Lemma~\ref{le:c=a+b} implies that $\G$ is arc-transitive, and so all $s$-walks in $\G$ are $n$-cycles. 

Now, let $\vartheta \in \Aut(\G)$ be an automorphism mapping the arc $(u_0,u_1)$ to the arc $(u_0,v_0)$. Then $\vartheta$ maps the $n$-cycle $(u_0,u_1, \ldots , u_{n-1})$ to the $s$-walk containing $(u_0,v_0)$. In view of (\ref{eq:s_lam_2}) the latter starts with $(u_0, v_0, u_{-b-1}, v_{-b-1}, u_{2(-b-1)}, v_{2(-b-1)}, \ldots)$, and so $n$ must be even and 
$$
	u_{2i}\vartheta = u_{-i(b+1)}\quad \mathrm{and}\quad u_{2i+1}\vartheta = v_{-i(b+1)}, \ \mathrm{where}\ i \in \ZZ_n,
$$
must hold. In particular, $\langle b+1 \rangle$ is the index $2$ subgroup of $\ZZ_n$, that is $\langle b+1 \rangle = \langle 2 \rangle$. Thus $b = 2m+1$ is odd and the vertices of the form $v_i$ are mapped to the vertices of the form $u_j$ and $v_j$ with $j$ odd.

Similarly, there exists an automorphism $\xi$ of $\G$, mapping the arc $(u_0,u_1)$ to the arc $(u_0,v_1)$. A similar argument to the above shows that 
$$
	u_{2i}\xi = u_{i(1-b)}\quad \mathrm{and}\quad u_{2i+1}\xi = v_{i(1-b)+1},\ \mathrm{where\ }i \in \ZZ_n.
$$
It follows that $\langle b-1 \rangle = \langle 2 \rangle$ in $\ZZ_n$, which together with $\langle b+1 \rangle = \langle 2 \rangle$ implies that $n \equiv 2 \pmod 4$. Now, since $v_0$ is adjacent to both $u_0$ and $u_{-1}$, its image under $\vartheta$ is a common neighbor of $u_0$ and $v_{b+1}$, and so $v_0\vartheta \in \{u_1, v_b\}$. If $v_0\vartheta = u_1$, then the fact that $v_1$ is a common neighbor of $u_0, u_1$ and $v_0$ implies that $v_1\vartheta = v_1$, and so $\vartheta$ fixes the entire $s$-walk containing $(u_0,v_1)$ pointwise. But then (\ref{eq:s_lam_2}) implies that $\vartheta$ fixes each $u_j$ with $j \in \langle b-1 \rangle = \langle 2 \rangle$, and so $u_2 = u_2\vartheta = u_{-b-1}$, implying that $b+3 = 0$. As $n \leq n/2$ we get the graph $\G = \N(6;1,3,4;1)$ which is easily seen to be arc-transitive with vertex stabilizers of order $12$. Moreover, setting $m = 1$ we have $3 = b = 2m+1$ and $6 = n = 2(m^2+m+1)$.

We can thus assume that $v_0\vartheta = v_b$ and consequently $v_1\vartheta = u_{-1}$. It follows that $u_{1-b}\vartheta = s(u_0,v_1)\vartheta = s(u_0,u_{-1}) = u_{-2}$. However, since $u_{1-b} = u_{-2m}$ we also have $u_{1-b}\vartheta = u_{m(b+1)}$, and so $m(b+1)+2 = 0$, that is $2(m^2+m+1) = 0$. Thus $\G = \N(n;1,2m+1,2m+2;1)$ where $n$ is an even divisor of $2(m^2+m+1)$ and $b = 2m+1 \leq n/2$. 

For the last part of the lemma it suffices to show that unless $\G = \N(6;1,3,4;1)$, the only automorphism of $\G$ fixing the arc $(u_0,u_1)$ is the identity. Now, since any such automorphism necessarily fixes the entire $s$-walk containing $(u_0,u_1)$ pointwise, it fixes each $u_i$. But as $v_i$ is adjacent to each of the fixed vertices $u_{i}, u_{i-1}, u_{i-b}, u_{i-b-1}$, it is now clear that the only way $v_i$ could be moved is if it was mapped to $v_{i-b}$ in which case $2b = 0$ would have to hold. But then $\G = \N(6;1,3,4;1)$, as claimed. 
\end{proof}

\begin{lemma}
\label{le:lambda2_fam}
For every $m \geq 1$ and every even divisor $n$ of $2(m^2+m+1)$ with $n \geq 4m+2$ the graph $\N(n;1,2m+1,2m+2;1)$ is arc-transitive with $\lambda = 2$.
\end{lemma}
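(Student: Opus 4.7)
The plan has three parts: verify $\lambda = 2$; exhibit an explicit automorphism $\vartheta$ of $\G := \N(n;1,2m+1,2m+2;1)$ which, combined with $\rho$, already acts transitively on the edge set; and then invoke Lemma~\ref{le:c=a+b} (using $c = a+b$) to upgrade edge-transitivity to arc-transitivity. The first step is immediate: with $a=1$, $b=2m+1$, $c=2m+2$, the list $[a,b-a,c-b,n-c]=[1,2m,1,n-2m-2]$ contains exactly two $1$'s, because $m\geq 1$ gives $2m\geq 2$ and $n\geq 4m+2$ gives $n-2m-2\geq 2m\geq 2$.

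The heart of the argument is the construction of $\vartheta$. Writing $t := b+1 = 2m+2$, I would define
$$
	u_{2i}\vartheta = u_{-ti}, \quad u_{2i+1}\vartheta = v_{-ti}, \quad v_{2j}\vartheta = v_{t(1-j)-1}, \quad v_{2j+1}\vartheta = u_{-tj-1} \qquad (i,j\in\ZZ).
$$
The $u$-part is exactly the formula forced by the $s$-walk analysis in the proof of Lemma~\ref{le:lambda2_3} once one fixes $u_0\vartheta = u_0$ and $u_1\vartheta = v_0$, and the $v$-part is then determined by the requirement that adjacency be preserved. To check that $\vartheta$ is well-defined and bijective, I would write $n=2n_0$, so that $n_0 \mid m^2+m+1 = m(m+1)+1$; any prime dividing $n_0$ together with $m$ or $m+1$ would have to divide $1$, hence $\gcd(m,n_0) = \gcd(m+1,n_0) = 1$, which makes each of the four block-maps a bijection between the corresponding even- and odd-indexed subsets of $u$- and $v$-vertices. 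The same divisibility hypothesis yields the identity $tm = 2m(m+1) \equiv -2 \pmod{n}$, which will be the key to closing all subsequent modular computations.

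Verifying $\vartheta\in\Aut(\G)$ reduces to checking that each of the six $\langle\rho\rangle$-orbit representatives of edges is sent to an edge. Splitting by the parity of the index produces twelve short calculations, each of which is closed by the identity $tm\equiv -2\pmod{n}$. For example, $u_{2i}u_{2i+1}\mapsto u_{-ti}v_{-ti}$ is a $0$-spoke while $u_{2i+1}u_{2i+2}\mapsto v_{-ti}u_{-ti-t}$ is a $c$-spoke; the hub edge $v_{2j}v_{2j+1}\mapsto v_{t(1-j)-1}u_{-tj-1}$ is a $c$-spoke, since the two indices differ by $t = c$; the $0$-spoke $u_{2i}v_{2i}\mapsto u_{-ti}v_{-ti+t-1}$ is a $b$-spoke; and the $b$-spoke $u_{2i}v_{2(i+m)+1}\mapsto u_{-ti}u_{-ti-tm-1} = u_{-ti}u_{-ti+1}$ is a rim edge. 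The remaining cases are analogous. The real obstacle will be bookkeeping---twelve parity subcases to be handled correctly---rather than anything conceptual.

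Once $\vartheta$ is established as an automorphism, I would observe that the $\langle\rho,\vartheta\rangle$-orbit of $u_0u_1$ already meets every $\langle\rho\rangle$-orbit of edges: rim and hub orbits meet the $0$- and $c$-spoke orbits under $\vartheta$, the $0$-spoke $u_0v_0$ maps to the $b$-spoke $u_0v_b$, and $u_1v_1\mapsto v_0u_{-1}$ is an $a$-spoke. Hence $\G$ is edge-transitive, and as $c = a+b$, Lemma~\ref{le:c=a+b} upgrades this to arc-transitivity, completing the proof.
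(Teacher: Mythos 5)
Your proposal is correct and follows essentially the same route as the paper: your map $\vartheta$ (written with $t=b+1$) is exactly the paper's permutation, the bijectivity and the key congruence $m(b+1)\equiv -2 \pmod{n}$ are established by the same divisibility argument, and the conclusion is reached via the same parity case-check plus Lemma~\ref{le:c=a+b}. No gaps worth noting.
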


\begin{proof}
Since $b = 2m+1 \geq 3$ and $c = 2m+2 \leq n-2$ it is clear that the edge $u_0u_1$ lies on precisely two $3$-cycles of the graph $\G = \N(n;1,2m+1,2m+2;1)$. Moreover, Lemma~\ref{le:c=a+b} implies that we only need to prove that $\G$ is edge-transitive. To this end we consider the permutation $\vartheta$ of $V(\G)$, defined by the rule
$$
	u_{2i}\vartheta = u_{-i(b+1)}, \ u_{2i+1}\vartheta = v_{-i(b+1)},\ v_{2i}\vartheta = v_{b-i(b+1)},\ v_{2i+1}\vartheta = u_{-1-i(b+1)}\quad \mathrm{for\ all\ }i \in \ZZ_n.
$$
Since $\gcd(b+1,n)$ divides $\gcd(2m+2, 2(m^2+m+1)) = 2$, it is clear that $\langle b+1 \rangle = \langle 2 \rangle$ in $\ZZ_n$, and so $\vartheta$ is a bijection. To prove that it also preserves adjacency in $\G$, we consider its action on the edges of the six different types (rim, hub, spokes). Since the image of a vertex depends on the parity of its index we need to consider twelve different possibilities. 

We consider six of them and leave the remaining six to the reader. For instance, the rim edges $u_{2i}u_{2i+1}$ and $u_{2i+1}u_{2i+2}$ are mapped to the $0$-spoke $u_{i(-b-1)}v_{i(-b-1)}$ and the $(b+1)$-spoke $v_{i(-b-1)}u_{(i+1)(-b-1)}$, respectively. Similarly, the $0$-spokes $u_{2i}v_{2i}$ and $u_{2i+1}v_{2i+1}$ are mapped to the $b$-spoke $u_{i(-b-1)}v_{b+i(-b-1)}$ and the $1$-spoke $v_{i(-b-1)}u_{-1+i(-b-1)}$, respectively. Finally, since $m(b+1) = m(2m+2) = 2(m^2+m+1)-2 = -2$ in $\ZZ_n$, the $b$-spokes $u_{2i}v_{2i+b}$ and $u_{2i+1}v_{2i+1+b}$ are mapped to the rim edge $u_{i(-b-1)}u_{i(-b-1)+1}$ and the hub edge $v_{i(-b-1)}v_{i(-b-1)+1}$. Similar considerations show that $\vartheta$ also preserves adjacency for the remaining six possible types of edges, and so $\vartheta \in \Aut(\G)$. The nature of its action reveals that $\langle \rho, \vartheta\rangle$ acts transitively on the edge-set of $\G$, and so Lemma~\ref{le:c=a+b} implies that $\G$ is arc-transitive.
\end{proof}

Combining together Lemmas~\ref{le:lambda2_1}--\ref{le:lambda2_3} allows us to complete the classification of the edge-transitive Nest graphs of girth $3$ with $\lambda = 2$. 

\begin{proposition}
\label{pro:lambda2}
Let $\G = \N(n;a,b,c;k)$ be a Nest graph of girth $3$. Then $\G$ is edge-transitive with $\lambda = 2$ if and only if $\G$ is isomorphic to one of the following graphs:
\begin{itemize}
\itemsep = 0pt
\item $\N(8;1,2,5;3)$;
\item $\N(8;1,3,4;3)$;
\item $\N(10;1,3,4;3)$;
\item $\N(n;1,2m+1,2m+2;1)$, where $m\geq 1$ and $n$ is an even divisor of $2(m^2+m+1)$ with $n \geq 4m+2$.
\end{itemize} 
Moreover, all of these graphs are arc-transitive.
\end{proposition}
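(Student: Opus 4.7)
This proposition is essentially a compilation of the preceding four lemmas, so the plan is to carry out the case analysis that channels every edge-transitive Nest graph of girth $3$ with $\lambda = 2$ into the hypotheses of one of those lemmas, and then invoke them.

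The ``if'' direction is a direct citation of Lemmas~\ref{le:lambda2_1}, \ref{le:lambda2_2} and \ref{le:lambda2_fam}. For the ``only if'' direction, the first step is to use Lemma~\ref{le:isom} and the standing normalization of Section~\ref{sec:ETNestG3} to assume $1 \leq a < b < c$, $k < n/2$, and $a$ minimal among $\{a, b-a, c-b, n-c\}$. Since $\lambda$ equals the number of $1$'s in this list (as noted at the start of Section~\ref{sec:ETNestG3}) and $\lambda = 2$, we must have $a = 1$ together with exactly one of $b-1$, $c-b$, $n-c$ also equal to $1$.

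This yields three subcases, of which the subcase $n - c = 1$ reduces to the subcase $b - 1 = 1$ via the chain
\[
\N(n;1,b,n-1;k) \cong \N(n;n-1,b,1;k) \cong \N(n;1,b+1,2;k) \cong \N(n;1,2,b+1;k),
\]
where the outer isomorphisms come from permuting $\{a,b,c\}$ and the middle one is the rotation rule of Lemma~\ref{le:isom}. Hence only two essentially distinct cases remain: (A) $a = 1$, $b = 2$, $4 \leq c \leq n-2$; and (B) $a = 1$, $c = b+1$, $3 \leq b \leq n-3$.

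Case (A) is immediately settled by Lemma~\ref{le:lambda2_1}, forcing $\G \cong \N(8;1,2,5;3)$. In case (B), Lemma~\ref{le:lambda2_2} either gives $\G \cong \N(8;1,3,4;3)$ or $\N(10;1,3,4;3)$, or else forces $k = 1$. In the latter situation, one more application of Lemma~\ref{le:isom} (producing $\N(n;1,b,b+1;k) \cong \N(n;1,n-b,n-b+1;k)$) permits the assumption $b \leq n/2$, and then Lemma~\ref{le:lambda2_3} delivers $\G \cong \N(n;1,2m+1,2m+2;1)$ with $n$ an even divisor of $2(m^2+m+1)$; the inequality $b \leq n/2$ translates into the stated $n \geq 4m+2$. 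Arc-transitivity of each resulting graph is part of the statement of the respective cited lemma, and for the infinite family it also follows directly from Lemma~\ref{le:c=a+b} since $c = b+1 = a + b$. The only real work is the bookkeeping of the case reduction, but once the three subcases collapse to two the cited lemmas close the argument.
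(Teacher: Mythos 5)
Your proposal is correct and follows essentially the same route as the paper, which obtains the proposition by combining Lemmas~\ref{le:lambda2_1}--\ref{le:lambda2_3} (and Lemma~\ref{le:lambda2_fam}) after the reduction, via Lemma~\ref{le:isom}, to the two cases $a=1,b=2$ and $a=1,c=b+1$. Your only addition is to spell out explicitly the isomorphism chain collapsing the $n-c=1$ subcase into the $b=2$ case, which the paper leaves implicit in the remark preceding Lemma~\ref{le:lambda2_1}.
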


\subsection{The case $\lambda = 1$}
\label{subsec:lambda=1}

Throughout this subsection we assume $\G = \N(n;a,b,c;k)$ is an edge-transitive Nest graph of girth $3$ with $\lambda = 1$. In this case each edge of $\G$ lies on a unique $3$-cycle of $\G$, and so the set of $3$-cycles of $\G$ decomposes the edge-set of $\G$. The subgroup $R = \langle \rho \rangle$, where $\rho$ is as in (\ref{eq:rho}), thus has two orbits of length $n$ in its action on the set of all $3$-cycles of $\G$. 

In view of Lemma~\ref{le:isom} we can assume $a = 1$, $3 \leq b$, $b+2 \leq c \leq n-2$ and $b-1 \leq n-c$. Therefore, $b \leq (n-1)/2$. One of the two $R$-orbits of $3$-cycles thus consist of all the $3$-cycles $(u_i,u_{i+1},v_{i+1})$, $i \in \ZZ_n$. Consequently, the other $R$-orbit of $3$-cycles consists of $3$-cycles having one hub edge, a $b$-spoke and a $c$-spoke. Thus $c-b$ equals either $k$ or $-k$ in $\ZZ_n$. With no loss of generality we can assume $c-b = k$ (note however that we can now no longer assume $k < n/2$). The $R$-orbit of $3$-cycles containing a hub edge then consists of all the $3$-cycles $(u_i,v_{i+b}, v_{i+b+k})$. Using the fact that $3 \leq b \leq (n-1)/2$ and $b+k \leq n-2$ one can easily check that the assumption that $(u_{-1},u_0,v_0)$ is the only $3$-cycle containing $u_0v_0$, as well as the only $3$-cycle containing $u_{-1}v_0$, implies
\begin{equation}
\label{eq:aux1}
	2 \leq k \leq n-5 \quad \mathrm{and}\quad b \notin \{-2k, 1-2k, -k,1-k,k,k+1\}.
\end{equation}
We start with the following straightforward but useful observation.

\begin{lemma}
\label{le:rotation}
Let $\G = \N(n;1,b,b+k;k)$, where $3 \leq b \leq (n-1)/2$, $b+k \leq n-2$ and $b$ and $k$ satisfy (\ref{eq:aux1}), be edge-transitive. Then for any $3$-cycle of $\G$ there exists an automorphism of $\G$ cyclically permuting its vertices. 
\end{lemma}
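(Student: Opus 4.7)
The plan is to exploit the combination of the hypothesis $\lambda = 1$ (which, under the parameter restrictions in (\ref{eq:aux1}), means every edge of $\G$ lies on a unique $3$-cycle) with the edge-transitivity of $\Aut(\G)$. First I would observe that, since each edge of $\G$ determines a unique $3$-cycle containing it, every automorphism of $\G$ permutes the set $\mathcal{T}$ of all $3$-cycles. Moreover, given any automorphism $\alpha$ sending an edge $e$ to an edge $e'$, it must send the unique triangle through $e$ to the unique triangle through $e'$. Consequently, the transitive action of $\Aut(\G)$ on edges lifts to a transitive action on $\mathcal{T}$, so all $3$-cycles are equivalent under $\Aut(\G)$.

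Next I would fix a $3$-cycle $T \in \mathcal{T}$ with vertex set $\{x,y,z\}$ and edge set $\{xy,yz,zx\}$, and examine its setwise stabilizer $G_T \leq \Aut(\G)$. For any two edges $e_1,e_2$ of $T$, edge-transitivity of $\Aut(\G)$ produces some $\alpha \in \Aut(\G)$ with $e_1\alpha = e_2$; uniqueness of the triangle through each of $e_1, e_2$ forces $T\alpha = T$, hence $\alpha \in G_T$. Thus $G_T$ acts transitively on the three edges of $T$.

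To conclude, I would pass to the action of $G_T$ on $\{x,y,z\}$, which yields a homomorphism $G_T \to \S(\{x,y,z\}) \cong S_3$, and the induced action on $\{xy,yz,zx\}$ realises the edge-transitive action just established. In $S_3$, every transposition fixes one edge of the triangle and swaps the other two, so any subgroup of $S_3$ acting transitively on the three edges must have order divisible by $3$ and hence contain the alternating subgroup $C_3$. Pulling a preimage of a nontrivial element of this $C_3$ back to $G_T$ then supplies the desired automorphism of $\G$ cyclically permuting $x$, $y$ and $z$.

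I do not expect any genuine obstacle here: the argument is a short structural consequence of $\lambda = 1$ together with edge-transitivity, and requires neither the explicit construction of an automorphism nor a case analysis on the parameters $n,b,k$. The only point to keep in mind is that the hypotheses on $n,b,k$ are exactly what guarantees $\lambda=1$, which was established in the paragraph preceding the lemma, so that the above reasoning applies verbatim.
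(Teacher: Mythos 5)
Your proof is correct, and it takes a cleaner route than the paper's. Both arguments hinge on the same key observation — since every edge lies on a unique $3$-cycle (the parameter conditions give this for $u_0u_1$, and edge-transitivity then gives $\lambda=1$ for all edges), any automorphism carrying one edge of a triangle $T$ to another edge of $T$ must stabilize $T$ setwise, so the setwise stabilizer $G_T$ acts transitively on the three edges of $T$. From there the paper argues by a dichotomy: if $\G$ is arc-transitive the claim is immediate, and if $\G$ is half-arc-transitive it invokes the fact that no automorphism can swap adjacent vertices, so in the $\Aut(\G)$-induced orientation all $3$-cycles are directed and an orientation-preserving edge-mapping automorphism must rotate $T$. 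You instead finish purely group-theoretically: the image of $G_T$ in $\S(\{x,y,z\})\cong S_3$ is transitive on the three edges, hence has order divisible by $3$, hence contains the rotation subgroup, and any preimage of a rotation is the desired automorphism. Your version is more uniform (no case split on arc- versus half-arc-transitivity) and uses nothing about half-arc-transitive graphs; what the paper's phrasing buys in exchange is the explicit observation that the $3$-cycles are directed in the half-arc-transitive case, a fact it reuses in the remarks preceding Lemma~\ref{le:regular}. The only cosmetic caveat is your framing of $\lambda=1$ as a hypothesis: strictly, the parameter conditions only force the rim edge $u_0u_1$ to lie on a single $3$-cycle, and it is edge-transitivity that then yields $\lambda=1$ for every edge — exactly the paper's opening sentence — but you acknowledge this, so nothing is missing.
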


\begin{proof}
Since the edge $u_0u_1$ lies on just one $3$-cycle, edge-transitivity implies that $\lambda = 1$. The claim thus clearly holds if $\G$ is arc-transitive. If however it is not arc-transitive, then edge-transitivity implies it is half-arc-transitive. Since no automorphism of a half-arc-transitive graph can interchange a pair of adjacent vertices it is thus clear that for any of the two natural orientations of the edges of $\G$, induced by the action of $\Aut(\G)$, all of the $3$-cycles are directed, and so we can again find an appropriate automorphism. 
\end{proof}

In the remainder of the paper we will say that a $2$-path is an {\em induced $2$-path} if its endvertices are not adjacent. In other words, induced $2$-paths are the $2$-paths that do not lie on a $3$-cycle.

We now identify four $R$-orbits of induced $5$-cycles of $\G$ (note that a $5$-cycle is induced if and only if all of its $2$-paths are induced). The representatives of the four $R$-orbits, whose members are said to be of type $g.1$, $g.2$, $g.3$ and $g.4$, respectively, are
$$
	(u_0,v_1,u_{1-b},u_{-b},v_0),\ (u_0,v_1,u_{1-b-k},u_{-b-k},v_0),\ (u_0, v_{b+k},u_k,v_k,v_0)\ \mathrm{and}\  (u_0, v_{b+k}, u_k, v_{k+1},v_1),
$$
respectively. Using (\ref{eq:aux1}) it is easy to verify that these $5$-cycles are all induced. The $5$-cycles of the above four $R$-orbits will be called {\em generic}. Observe that each spoke of $\G$ lies on four different generic $5$-cycles while each rim and each hub edge lies on two generic $5$-cycles. Since $\G$ is edge-transitive this implies that $\G$ must have some non-generic induced $5$-cycles as well. We can in fact prove more but before stating the next lemma we introduce some notation. For a cycle $C$ of $\G$ and consecutive vertices $x$ and $y$ on $C$ we let $s_C(x,y)$ be the successor of $y$ on $C$ when $C$ is traversed in the direction from $x$ to $y$. Thus, if $C$ is the generic $5$-cycle of type $g.1$, containing the edge $u_0u_1$, then $s_C(u_0,u_1) = v_{b+1}$.

\begin{lemma}
\label{le:five}
Let $\G$ be as in Lemma~\ref{le:rotation}. Then each edge of $\G$ lies on at least five induced $5$-cycles and each induced $2$-path of $\G$ lies on at least one induced $5$-cycle.
\end{lemma}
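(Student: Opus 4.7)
The plan is to establish the two conclusions of the lemma via distinct arguments, with the substance lying in the second.

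For the first conclusion (each edge lies on at least five induced $5$-cycles), I use edge-transitivity and a census of spoke-free $5$-cycles. By edge-transitivity, the number $N$ of induced $5$-cycles through any edge is a constant, and since each spoke already lies on four generic induced $5$-cycles, $N \geq 4$. Assume for contradiction that $N = 4$. Then every induced $5$-cycle through a spoke is generic. Consequently, any induced $5$-cycle through a rim edge $e$ that contains a spoke is generic (being generic through its spoke edge), and hence must coincide with one of the two generic $5$-cycles already counted through $e$. The remaining $N - 2 = 2$ induced $5$-cycles through each rim edge would therefore have to be spoke-free, and so lie entirely inside the rim subgraph. But the rim is an $n$-cycle with $n \geq 7$ (the latter following from $b \geq 3$ and $c \leq n-2$), which contains no $5$-cycles at all---a contradiction. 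Hence $N \geq 5$.

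For the second conclusion (each induced $2$-path lies on at least one induced $5$-cycle), I plan a case analysis on the $R$-orbits of induced $2$-paths. Each vertex of $\G$ has degree six and is incident with three triangles (since $\lambda = 1$), so exactly twelve induced $2$-paths are centered at each vertex, yielding at most twenty-four $R$-orbits (twelve with $u$-middle and twelve with $v$-middle). For each orbit representative I will exhibit an explicit induced $5$-cycle containing it, using whenever possible one of the four generic families $g.1$--$g.4$ after a suitable $\rho$-shift; for instance, the $2$-path $(u_0,u_1,v_{b+1})$ sits inside the shift $(u_0,u_1,v_{b+1},u_b,v_b)$ of a $g.1$-type $5$-cycle through the rim edge $u_0u_1$. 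For the orbit representatives not covered by any generic $5$-cycle---most notably $(u_{-1},u_0,u_1)$, $(v_0,u_0,v_1)$, and their $v$-centered analogues---I will resort to ad hoc constructions built from rim, hub and spoke edges, and will verify chord-freeness using the restrictions (\ref{eq:aux1}).

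The main obstacle is this case analysis in the second part: while the overall strategy is routine, the choice of a containing induced $5$-cycle for each non-generic orbit representative depends delicately on the parameters $b$ and $k$, and verifying that the chosen $5$-cycle is truly induced requires careful case-splitting against (\ref{eq:aux1}). I also note that once the second conclusion is available, a standard double count of (induced $2$-path, induced $5$-cycle) incidences through a fixed edge gives $N \geq 4$ as well; the direct spoke-free argument above is, however, what yields the sharper bound $N \geq 5$ needed for the first conclusion.
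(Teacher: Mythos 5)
Your argument for the first claim is correct and is genuinely different from the paper's: assuming every edge lies on exactly four induced $5$-cycles, you force two spoke-free $5$-cycles through a rim edge, which would have to live entirely in the rim $n$-cycle with $n\geq 7$ --- a clean contradiction. (The paper instead obtains the fifth cycle as a by-product of the argument for the second claim.) The problem is the second claim, which you have only planned, not proved, and the plan as stated cannot work. The induced $2$-paths lying on no generic $5$-cycle are, up to the action of $\rho$, the paths $(u_{-1},u_0,u_1)$, $(u_{-1},u_0,v_1)$, $(v_0,u_0,u_1)$, $(v_{-k},v_0,v_k)$, $(u_{-b},v_0,v_{-k})$ and $(u_{-b-k},v_0,v_k)$ (note that $(v_0,u_0,v_1)$, contrary to your list, does lie on the generic $5$-cycles of types $g.1$ and $g.2$). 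For these orbits no explicit $5$-cycle can be written down from (\ref{eq:aux1}) alone: for instance, a $5$-cycle through $(u_{-1},u_0,u_1)$ requires a common edge between $\G(u_1)\setminus\{u_0\}$ and $\G(u_{-1})\setminus\{u_0\}$, which happens only under specific arithmetic coincidences such as $k=2$, $k=3$, $b=3$, $b=k+2$, etc.; for generic $b,k$ satisfying (\ref{eq:aux1}) there is no $5$-cycle through three consecutive rim vertices at all. So the existence of a containing induced $5$-cycle for these $2$-paths is not a consequence of the parameter restrictions --- it can only be extracted from edge-transitivity itself, and your proposal never does this.

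This is exactly how the paper proceeds: the automorphism supplied by Lemma~\ref{le:rotation}, rotating the $3$-cycle $(u_{-1},u_0,v_0)$, maps the four generic $5$-cycles through the spoke $u_0v_0$ (whose successors beyond $v_0$ are the four neighbors of $v_0$ other than $u_0$ and $u_{-1}$) to four induced $5$-cycles through the edge $v_0u_{-1}$ whose successors beyond $u_{-1}$ are pairwise distinct and cannot be the common neighbor $u_0$, hence are exactly $u_{-2},v_{-1},v_{b-1},v_{b+k-1}$. This yields an induced $5$-cycle through the non-generic $2$-path $(u_{-2},u_{-1},v_0)$ (giving a fifth $5$-cycle through $u_{-1}v_0$) and, together with the four generic $5$-cycles through $u_{-1}v_0$ whose successors beyond $v_0$ are pairwise distinct, covers every induced $2$-path containing the edge $u_{-1}v_0$; edge-transitivity then gives both claims. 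To repair your proof you would need to replace the ``ad hoc constructions'' for the uncovered orbits by an argument of this kind, i.e.\ one that transports generic $5$-cycles by automorphisms rather than exhibiting them explicitly.
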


\begin{proof}
Observe that for the four generic $5$-cycles containing $u_0v_0$, say $C_i$, where $1 \leq i \leq 4$, the successors $s_{C_i}(u_0,v_0)$ are all different. Now, take $\psi \in \Aut(\G)$ such that it maps the arc $(u_0, v_0)$ to $(v_0, u_{-1})$ (which exists due to Lemma~\ref{le:rotation}). Since for an induced cycle $C$ and consecutive vertices $x$ and $y$ contained in $C$, $s_C(x,y)$ cannot be the common neighbor of $x$ and $y$, it follows that for each $x \in \{u_{-2}, v_{-1}, v_{b-1}, v_{b+k-1}\}$ there is an induced $5$-cycle $C$ containing the edge $v_0u_{-1}$ such that $s_C(v_0,u_{-1}) = x$. However, there is no generic $5$-cycle through the $2$-path $(u_{-2}, u_{-1},v_0)$, and so there must exist a non-generic induced $5$-cycle containing it. The edge $u_{-1}v_0$ thus lies on at least five induced $5$-cycles, proving the first claim of the lemma (recall that $\G$ is edge-transitive). Since the successors $s_{C'_i}(u_{-1},v_0)$ for the four generic $5$-cycles $C'_i$ through $u_{-1}v_0$ are also all different, this shows that each induced $2$-path, containing the edge $u_{-1}v_0$, lies on at least one induced $5$-cycle, which also proves the second claim of the lemma.
\end{proof}

In the remainder of this section a careful investigation of the induced $5$-cycles of $\G$ is undertaken. Before doing so we first prove that but for one exception the graph $\G$ contains no $4$-cycles.

\begin{lemma}
\label{le:4-cyc}
Let $\G$ be as in Lemma~\ref{le:rotation}. Then $\G$ contains no $4$-cycles unless $\G \cong \N(12;1,3,10;5)$.
\end{lemma}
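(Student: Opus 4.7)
The plan is to assume $\G$ contains a $4$-cycle and deduce $\G\cong \N(12;1,3,10;5)$. First, any $4$-cycle through $u_0u_1$ must have the form $(u_0,u_1,z,y)$ with $y\in N(u_0)\setminus\{u_1,v_1\}$ and $z\in N(u_1)\setminus\{u_0,v_1\}$, since $\lambda=1$ guarantees that $v_1$ is the only common neighbor of $u_0$ and $u_1$. Moreover, one verifies using (\ref{eq:aux1}) that $v_1$ is not adjacent to any vertex in $(N(u_0)\cup N(u_1))\setminus\{u_0,u_1,v_1\}$, so in fact $y\in A:=\{u_{-1},v_0,v_b,v_{b+k}\}$ and $z\in B:=\{u_2,v_2,v_{b+1},v_{b+k+1}\}$, with $y\sim z$.

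Going through the sixteen pairs in $A\times B$ and using (\ref{eq:aux1}) to rule out modular coincidences, one obtains a short list of conditions on $(n,b,k)$ that are equivalent to $y\sim z$: $b=3$, $k=2$, $n=b+k+2$, $k=b+1$, $k=b-2$, $n\mid b+2k+1$, $n\mid 2k+1$, $n\mid b+2k-2$, and $n\mid 2k-1$. Each condition contributes one or more $4$-cycles to the count $\mu_4$ through $u_0u_1$: for example, $b=3$ contributes two (from the pairs $(u_{-1},v_2)$ and $(v_b,u_2)$), $k=2$ contributes three (from $(u_{-1},v_{b+1}),(v_0,v_2),(v_{b+k},u_2)$), and $n=b+k+2$ contributes two (from $(u_{-1},v_{b+k+1})$ and $(v_0,u_2)$).

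The next step uses edge-transitivity, which forces $\mu_4$ to be constant across all edges. I would perform the analogous enumeration of $4$-cycles through a hub edge $v_0v_k$ and a $0$-spoke $u_0v_0$, extracting parallel lists of conditions, and then require all three counts to coincide. A careful case analysis, organized by whether $b=3$ or not, shows that all combinations fail except $b=3$ together with $n=b+k+2$ (giving $\mu_4=4$) and no other simultaneous condition. To pin down $n$ within the resulting family $\N(k+5;1,3,k+3;k)$, I would invoke the isomorphism (from Lemma~\ref{le:isom}) $\N(k+5;1,3,k+3;k)\cong\N(k+5;2,k+2,k+4;k)$, in which $c=a+b$ holds; Lemma~\ref{le:c=a+b} then forces edge-transitivity to imply arc-transitivity, and a stabilizer analysis analogous to those in the proofs of Lemmas~\ref{le:fam1} and~\ref{le:fam2} shows that the only arc-transitive member of the family is $n=12$, $k=7$, giving $\G\cong\N(12;1,3,10;7)\cong\N(12;1,3,10;5)$. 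The case $b\neq 3$ is handled by a parallel (and shorter) analysis, in which the $\mu_4$-equality across edges already forces a contradiction with (\ref{eq:aux1}).

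The main obstacle is the sheer combinatorial volume of the case analysis, both in enumerating $4$-cycles through the three edge types and in tracking all simultaneous modular coincidences among the parameters. Moreover, because the equality of $\mu_4$ across edges is necessary but not sufficient for edge-transitivity, the final step requires Lemma~\ref{le:c=a+b} together with an explicit stabilizer argument (excluding spurious graphs such as $\N(13;1,3,11;5)$, which satisfies $\mu_4=4$ uniformly but is not edge-transitive) to conclude that only $\N(12;1,3,10;5)$ survives.
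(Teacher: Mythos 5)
Your overall skeleton (enumerate the $4$-cycles through representative edges, compare counts using edge-transitivity, reduce to a small family, finish with an isomorphism/arc-transitivity argument) does not match what actually happens, and the key claim on which it rests is false. Requiring the number of $4$-cycles per edge to agree across all edge types is only a necessary condition, and it does \emph{not} force $b=3$ together with $n=b+k+2$. Concretely, take $\G=\N(4m;1,m+1,2m;m-1)$ with $m\geq 4$ (e.g.\ $\N(20;1,6,10;4)$): the parameters satisfy (\ref{eq:aux1}), and the only $4$-cycles are the four-spoke cycles $(u_i,v_i,u_{i+2m},v_{i+2m})$ (covering each $0$-spoke and each $(b+k)$-spoke once) and the two-spoke cycles $(u_i,u_{i+1},v_{i+2},v_{i+m+1})$ (covering each rim edge, $1$-spoke, hub edge and $b$-spoke once), so \emph{every} edge lies on exactly one $4$-cycle, yet $b\neq 3$. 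The same holds for $\N(4m;1,m,2m+1;m+1)$. These are precisely the graphs the paper has to eliminate in its Case~1 ($N_4=n/2$), and it cannot do so by counting: it uses Lemma~\ref{le:rotation} to produce an automorphism sending $(u_{-1},u_0)$ to $(v_1,u_0)$ and chases the images of the unique $4$-cycles through successive edges until the parameters collapse to a contradiction. Your ``parallel (and shorter) analysis'' for $b\neq 3$, which claims a contradiction from count-equality alone, therefore fails; the paper's Case~2 subcases ($2b+k=0$, $2b+2k-1=0$, $2b+k-2=0$) are likewise killed by automorphism/successor arguments on how $4$-cycles through one edge must map to $4$-cycles through another, not by counts.

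The endgame is also not carried out. Within the family $\N(k+5;1,3,k+3;k)$ your observation that Lemma~\ref{le:isom} renormalizes it so that $c=a+b$, whence Lemma~\ref{le:c=a+b} upgrades edge-transitivity to arc-transitivity, is correct and pleasant, but the decisive step --- showing that $n=12$ is the only surviving order --- is left to ``a stabilizer analysis analogous to Lemmas~\ref{le:fam1} and~\ref{le:fam2}''. Those lemmas construct explicit automorphisms to prove that specific graphs \emph{are} arc-transitive; they give no template for proving that infinitely many candidate graphs are \emph{not} edge-transitive, which is what is needed here (as your own example $\N(13;1,3,11;5)$ shows, uniform counts admit spurious members). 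The paper settles this inside its Case~3 ($2b+k-1=0$, hence $N_4=2n$, $c_4=4$) by analyzing exactly which four $4$-cycles pass through $u_0v_0$ and how a hypothetical edge-transitive automorphism must permute their successors, which forces $b+k=10$ and $n=12$. Without an argument of this kind, both the exclusion of the $b\neq 3$ families and the reduction to $\N(12;1,3,10;5)$ remain unproved.
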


\begin{proof}
Suppose $\G$ does contain $4$-cycles. Since each edge of $\G$ lies on a unique $3$-cycle all $4$-cycles are induced. Observe that a $4$-cycle contains zero, two or four spokes (we say that a $4$-cycle is a {\em zero-spoke}, a {\em two-spoke} or a {\em four-spoke} $4$-cycle, respectively), where the first option occurs if and only if $4k = 0$ in $\ZZ_n$. Let $c_4$ denote the number of $4$-cycles through any given edge of $\G$ (recall that $\G$ is edge-transitive) and for each $i \in \{0,2,4\}$ let $N_i$ be the number of $4$-cycles of $\G$ containing $i$ spokes. We now count the pairs of a $4$-cycle and an edge contained in it, and also the pairs of a $4$-cycle and a spoke contained in it. Doing this in two different ways we obtain (note that $\G$ has $4n$ spokes and $2n$ non-spokes)
$$
	6nc_4 = 4(N_0 + N_2 + N_4) \quad \mathrm{and} \quad 4nc_4 = 2N_2 + 4N_4.
$$
It follows that $N_2 + 2N_0 = nc_4 = 2N_4 - 2N_0$. Observe that the $R$-orbits of two-spoke $4$-cycles are all of length $n$, while an $R$-orbit of a four-spoke $4$-cycle is of length $n$ or $n/2$. Thus $2N_4$ is divisible by $n$, and so $nc_4 = 2N_4 - 2N_0$ implies that $N_0 = 0$ (note that if $N_0 \neq 0$ then $N_0 = n/4$). This finally proves that
\begin{equation}
\label{eq:c_4}
nc_4 = N_2 = 2N_4.
\end{equation}
In particular, $N_4 \neq 0$. We now analyze the possible four-spoke $4$-cycles. To any such $4$-cycle a {\em code} of length $4$ can be assigned in the following way. We start traversing the cycle at a vertex of the form $u_i$ and then for each traversed edge put a $0$, $1$, $b$ or $c$ in the code, depending on whether we traversed a $0$-, $1$-, $b$- or $(b+k)$-spoke, respectively. We claim that none of the $2$-paths $(u_i,v_{i+b+k},u_{i+k})$ or $(v_i,u_i,v_{i+1})$ can lie on a four-spoke $4$-cycle. Suppose to the contrary that $C$ is a four-spoke $4$-cycle containing the $2$-path $(u_i,v_{i+b+k},u_{i+k})$. Since $C$ is an induced $4$-cycle we must have $s_C(v_{i+b+k},u_i) \in \{v_i, v_{i+1}\}$ and $s_C(v_{i+b+k}, u_{i+k}) \in \{v_{i+k}, v_{i+k+1}\}$, implying that one of $i = i+k+1$ and $i+1 = i+k$ holds, which contradicts (\ref{eq:aux1}). A similar argument shows that the $2$-path $(v_i,u_i,v_{i+1})$ also cannot lie on a four-spoke $4$-cycle. This shows that in the code of any four-spoke $4$-cycle each $0$ or $1$ is followed by a $b$ or $c$ and vice versa. We can thus assume that the first symbol of the code is a $0$ or a $1$. Up to 2-step cyclic rotations the possible codes are then $(0,b,0,b), (0,b,0,c), (0,b,1,b), (0,b,1,c), (0,c,0,c), (0,c,1,b), (0,c,1,c), (1,b,1,b), (1,b,1,c)$ and $(1,c,1,c)$. Since $2b \leq n-1$ none of the codes $(0,b,0,b), (0,b,1,b)$ and $(1,b,1,b)$ is possible. In the following table we list the remaining seven possible codes, the corresponding necessary and sufficient conditions for the existence of the corresponding four-spoke $4$-cycles and the lengths of the corresponding $R$-orbits of $4$-cycles. We name the $R$-orbits by $\oo_1$-$\oo_7$.
$$
\begin{array}{|c|c|c|c|}
\hline
\mathrm{name} & \mathrm{code} & \mathrm{condition} & \mathrm{length} \\
\hline
\oo_1 & (0,b,0,c) & 2b+k = 0 & n \\ \hline
\oo_2 & (0,b,1,c) & 2b+k-1 = 0 & n \\ \hline
\oo_3 & (0,c,0,c) & 2b+2k = 0 & n/2 \\ \hline
\oo_4 & (0,c,1,b) & 2b+k-1 = 0 & n \\ \hline
\oo_5 & (0,c,1,c) & 2b+2k-1 = 0 & n \\ \hline
\oo_6 & (1,b,1,c) & 2b+k-2 = 0 & n \\ \hline
\oo_7 & (1,c,1,c) & 2b+2k-2 = 0 & n/2 \\ \hline
\end{array}
$$

Using (\ref{eq:aux1}) we find that out of the six conditions from the table the only two that can possibly hold simultaneously are the ones for $\oo_1$ and $\oo_7$. If this is indeed the case then $N_4 = 3n/2$, and so (\ref{eq:c_4}) implies $c_4 = 3$ and $N_2 = 3n$. Moreover, $n$ is even, $k = 2$ and $b = n/2 - 1$. This implies that $(u_0,u_1,u_2,v_{n/2+1})$ and $(u_0,u_1,v_2,v_0)$ are both (two-spoke) $4$-cycles, implying that their $R$-orbits, together with $\oo_1$ and $\oo_7$, provide the three $4$-cycles through each rim edge, each $0$-spoke and each $(b+k)$-spoke, while they only provide two $4$-cycles through each $1$-spoke and each $b$-spoke, and only one $4$-cycle through each hub edge. The remaining $R$-orbit of (two-spoke) $4$-cycles thus consists of $4$-cycles containing two consecutive hub edges, a $1$-spoke and a $b$-spoke. Since the $4$-cycles must be induced, they are of the form $(u_i,v_{i+1},v_{i+3},v_{i+5})$ with $n/2 - 1 = b = 5$. But then $\G = \N(12;1,5,7;2)$, which is not edge-transitive, as can easily be verified. It follows that precisely one of the conditions from the above table holds, implying that $N_4 \in \{n/2, n, 2n\}$. We now analyze each of these three possibilities. 
\medskip

\noindent
Case 1: $N_4 = n/2$.\\
In this case one of $2b+2k = 0$ and $2b+2k-2 = 0$ holds, and so (\ref{eq:aux1}) implies that $n$ is even and one of $b+k = n/2$ and $b+k = n/2 + 1$ holds. Moreover, (\ref{eq:c_4}) implies $c_4 = 1$ and $N_2 = n$, so that a unique $R$-orbit of two-spoke $4$-cycles exists. If $b+k = n/2$, then the fact that the $4$-cycles of $\oo_3$ only contain $0$- and $(b+k)$-spokes implies that the $4$-cycle containing $u_0u_1$ must be $(v_{b}, u_0,u_{1},v_{2})$ with $b = k+2$ (recall that it is induced). Thus $2k+2 = n/2$, implying that $n = 4m$ for some integer $m$. Then $k = m-1$ and $b = m+1$, so that $\G = \N(4m;1,m+1,2m;m-1)$. If however $b+k = n/2 + 1$, a similar argument shows that $n = 4m$, $b = m$ and $k = m+1$, so that $\G = \N(4m;1,m,2m+1;m+1)$. However, as we now show, none of these graphs is edge-transitive. Suppose that $\G = \N(4m;1,m+1,2m;m-1)$ where $m \geq 3$ (recall that (\ref{eq:aux1}) holds). Let $\psi \in \Aut(\G)$ be an automorphism mapping the arc $(u_{-1},u_0)$ to $(v_1,u_0)$ (which exists by Lemma~\ref{le:rotation} since we have $\rho$ from (\ref{eq:rho})). Then $\psi$ fixes setwise the $4$-cycle $(v_m,u_{-1},u_0,v_1)$, and so it fixes $v_{m}$ and interchanges $u_{-1}$ with $v_1$. It follows that it also interchanges the common neighbor $v_0$ of $u_{-1}$ and $u_0$ with the common neighbor $u_1$ of $v_1$ and $u_0$. This implies that $\psi$ interchanges the unique $4$-cycle $(u_0,v_0,u_{2m},v_{2m})$ through $u_0v_0$ with the unique $4$-cycle $(u_0,u_1,v_2,v_{m+1})$ through $u_0u_1$. In particular, it interchanges $v_{2m}$ with $v_{m+1}$ and $u_{2m}$ with $v_2$. Moreover, it interchanges the unique $4$-cycle $(u_1,v_1,u_{2m+1},v_{2m+1})$ through $u_1v_1$ with the unique $4$-cycle $(v_0,u_{-1},u_{-2},v_{m-1})$ through $v_0u_{-1}$, and so it interchanges $u_{2m+1}$ with $u_{-2}$ and $v_{2m+1}$ with $v_ {m-1}$. But then the common neighbor $u_{2m}$ of $u_{2m+1}$ and $v_{2m+1}$ must be mapped to $v_{2m-2}$, implying that $2m-2 = 2$, which contradicts $m \geq 3$.
A similar contradiction can be obtained if $\G = \N(4m;1,m,2m+1,m+1)$.
\medskip

\noindent
Case 2: $N_4 = n$.\\
In this case one of $2b+k = 0$, $2b+2k-1 = 0$ and $2b+k-2 = 0$ holds and (\ref{eq:c_4}) implies $c_4 = 2$ and $N_2 = 2n$, so that there are two $R$-orbits of two-spoke $4$-cycles. The arguments in each of the three possibilities are similar. Suppose first that $2b+k = 0$, so that $4$-cycles from $\oo_1$ exist. Then for each $i \in \ZZ_n$ the two $4$-cycles of $\G$ containing $u_iv_i$ are in $\oo_1$. The two $4$-cycles $C$ and $C'$ through $u_1v_1$ have $s_C(u_1,v_1) \neq s_{C'}(u_1,v_1)$ and $s_C(v_1,u_1) \sim s_{C'}(v_1,u_1)$. Letting $\psi \in \Aut(\G)$ be an automorphism mapping $(u_1,v_1)$ to $(u_0,u_1)$ we thus have that $s_{C\psi}(u_0,u_1) \neq s_{C'\psi}(u_0,u_1)$ and $s_{C\psi}(u_1,u_0) \sim s_{C'\psi}(u_1,u_0)$. As there is no $4$-cycle containing $(v_0,u_0,u_1)$ (since both that contain $u_0v_0$ are in $\oo_1$) we have $\{s_{C\psi}(u_1,u_0), s_{C'\psi}(u_1,u_0)\} = \{v_b, v_{b+k}\}$, and so $C\psi$ and $C'\psi$ belong to the two different $R$-orbits of two-spoke $4$-cycles. It follows that the $4$-cycles from each of these two orbits contain one $1$-spoke and one hub edge each (recall that $c_4 = 2$). But then $s_{C\psi}(u_0,u_1) = v_2 = s_{C'\psi}(u_0,u_1)$, a contradiction. For the possibility $2b+2k-1 = 0$ the two $4$-cycles of $\G$ containing $u_iv_{i+b+k}$ are both in $\oo_5$ while for the possibility $2b+k-2 = 0$ the two $4$-cycles of $\G$ containing $u_iv_{i+1}$ are both in $\oo_6$. In both cases a similar argument as above applies. We leave the details to the reader. 
\medskip

\noindent
Case 3: $N_4 = 2n$.\\
In this case $2b+k-1 = 0$ holds, so that we have the four-spoke $4$-cycles from $R$-orbits $\oo_2$ and $\oo_4$, while (\ref{eq:c_4}) implies $c_4 = 4$ and $N_2 = 4n$. Let $C_2 \in \oo_2$ and $C_4 \in \oo_4$ be the four-spoke $4$-cycles through $u_{-1}v_0$ and let $\psi \in \Aut(\G)$ be an automorphism mapping $(u_{-1},v_0)$ to $(v_0,u_0)$. Observe that $s_{C_2}(u_{-1},v_0) \nsim s_{C_4}(u_{-1},v_0)$, while $s_{C_2}(v_0,u_{-1}) \sim s_{C_4}(v_0,u_{-1})$. Therefore, $s_{C_2\psi}(u_0,v_0) \sim s_{C_4\psi}(u_0,v_0)$ holds. Note however, that for the $4$-cycles $C_2' \in \oo_2$ and $C_4'\in \oo_4$ through $u_0v_0$ we have $s_{C_2'}(u_0,v_0) \nsim s_{C_4'}(u_0,v_0)$, showing that there must be a two-spoke $4$-cycle $C$ through $u_0v_0$ so that $s_C(u_0,v_0) \in \{v_{k}, v_{-k}\}$. Now, observe that since the rim and hub edges only lie on the two-spoke $4$-cycles which either contain one rim and one hub edge or two rim edges or two hub edges it follows that a $4$-cycle with two consecutive rim edges exists if and only if a $4$-cycle with two consecutive hub edges exists. 

If there is no $4$-cycle with two rim edges then the spokes and non-spokes alternate on each two-spoke $4$-cycle, and so for the two two-spoke $4$-cycles through $u_0v_0$ (recall that $c_4 = 4$), which we denote by $C'$ and $C''$, we have $s_{C'}(v_0,u_0) = s_{C''}(v_0,u_0) = u_1$ (recall that the $4$-cycles are induced), which is not adjacent to any of $s_{C_2'}(v_0,u_0)$ or $s_{C_4'}(v_0,u_0)$. Applying $\psi{-1}$ to $C'$ and $C''$ we thus see that since $s_{C_2}(u_{-1},v_0) \nsim s_{C_4\psi}(u_{-1}, v_0)$ one of $s_{C'\psi^{-1}}(u_{-1},v_0)$ and $s_{C''\psi^{-1}}(u_{-1},v_0)$ must coincide with one of $s_{C_2}(u_{-1},v_0)$ and $s_{C_4}(u_{-1}, v_0)$, implying there is another $4$-cycle (not from $\oo_2$ or $\oo_4$) containing two consecutive spokes, a contradiction. 

This shows that a $4$-cycle $C$ containing $(u_0,u_1,u_2)$ must exist. Since $C$ is induced, it follows that $s_C(u_1,u_0) \in \{v_0,v_b,v_{b+k}\}$ and $s_C(u_1,u_2) \in \{v_3,v_{b+2}, v_{b+k+2}$. In view of (\ref{eq:aux1}) it thus follows that one of $b+k+2 = 0$, $b = 3$ and $k = 2$ holds. Since $2b+k-1 = 0$ the first two conditions are equivalent and so by (\ref{eq:aux1}) precisely one of $b = 3$ and $k = 2$ holds. Suppose first that $k = 2$, in which case $2b + 1 = 0$, forcing $n = 2b+1$ and $\G = \N(2b+1; 1, b, b+2; 2)$. Then $C = (u_0,u_1,u_2,b_{b+2})$, $C\rho^{-1}$ and $(u_0,u_1,v_2,v_0)$ are $4$-cycles of $\G$ through $u_0u_1$, and so the fourth one, say $C'$, must also contain one hub edge. Then $s_{C'}(u_1,u_0) \in \{v_0,v_b,v_{b+2}\}$ and $s_{C'}(u_0,u_1) \in \{v_2,v_{b+1},v_{b+3}\}$ and then $b \geq 4$ implies that $b = 4$ with $n = 9$ must hold, which however is also not possible since we then have five $4$-cycles through $u_0u_1$.  

This leaves us with the possibility that $k > 2$ and $b = 3$, which then implies $k+5 = 0$. Then $C = (u_0,u_1,u_2,v_3)$ and $C' = (u_0,u_1,u_2,v_0)$ are both $4$-cycles of $\G$. Now, $s_{C}(u_2,u_1) = s_{C'}(u_2,u_1) = u_0$ is a neighbor of $v_1 = s_{C'\rho}(u_2,u_1)$, and so applying $\rho^{-1}\psi^{-1}$ we find that the fourth $4$-cycle $C''$ through $u_0v_0$ (other than $C_2'$, $C_4'$ and $C'$) has $s_{C''}(v_0,u_0) \in \{v_b, v_{b+k}\}$. As also $s_{C}(u_0,u_1) = s_{C'}(u_0,u_1) = u_2$ is a neighbor of $v_2 = s_{C\rho^{-1}}(u_0,u_1) = v_2$, while in addition $s_{C_4'}(u_0,v_0) = s_{C'}(u_0,v_0) = u_2$ is not a neighbor of $s_{C_2'}(u_0,v_0)$, applying $\rho^{-1}\psi^{-1}$ shows that $s_{C''}(u_0,v_0) = v_{-k} = v_5$, forcing $C'' = (u_0,v_0,v_5,v_{10})$ with $b+k = 10$, that is $n = 12$. Therefore, $\G = \N(12;1,3,10;7) \cong \N(12;1,3,10;5)$, as claimed.
\end{proof}

For the rest of this section we assume $\G$ is not isomorphic to $\N(12;1,3,10;5)$, so that $\G$ has no $4$-cycles by Lemma~\ref{le:4-cyc}. It follows that all $5$-cycles of $\G$ are induced, that is, any $2$-path along a $5$-cycle is induced. We now investigate the $5$-cycles of $\G$. Before making the first few useful observations we fix some notation. We let $c_5$ be the number of $5$-cycles through any given edge of $\G$ and for each $i \in \{0,2,4\}$ we let $N_i$ be the number of $5$-cycles of $\G$ with $i$ spokes.

\begin{lemma}
\label{le:2AT}
Let $\G$ be as in Lemma~\ref{le:rotation} with $\G \ncong \N(12;1,3,10;5)$. Then the following hold.
\begin{itemize}\itemsep = 0pt
\item[(i)] No $3$-path of $\G$ is contained on more than one $5$-cycle.
\item[(ii)] $N_4 = 2N_2 + 5N_0$.
\item[(iii)] $c_5 \in \{5,10\}$, unless $5k = 0$ in $\ZZ_n$, in which case $c_5 \in \{6,11\}$.
\item[(iv)] $\Aut(\G)$ has more than one orbit on the set of all induced $2$-paths of $\G$.
\end{itemize}
\end{lemma}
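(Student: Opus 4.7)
The plan is to dispose of the four parts in sequence, with (iv) following from (iii) by a divisibility observation. Part (i) is immediate from Lemma~\ref{le:4-cyc}: if a $3$-path $(x_0,x_1,x_2,x_3)$ lay on two distinct induced $5$-cycles, the two extra vertices $a\ne b$ would both belong to $\G(x_0)\cap\G(x_3)$, and $(x_0,a,x_3,b)$ would form a $4$-cycle, which is forbidden. For (ii), I would perform two double-counts of pairs (induced $5$-cycle, contained edge): counting over all $6n$ edges yields $5(N_0+N_2+N_4)=6nc_5$ by edge-transitivity, while counting over the $2n$ non-spoke edges yields $5N_0+3N_2+N_4=2nc_5$. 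Taking the difference of these gives $N_2+2N_4=2nc_5$ (the spoke count), while five times the second minus the first gives $10N_0+5N_2=2nc_5$; equating these two expressions for $2nc_5$ yields $N_4=2N_2+5N_0$.

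For (iii) the relation $10N_0+5N_2=2nc_5$ from the derivation of (ii) is already in hand, and the key additional input is an analysis of $R$-orbits. Any nontrivial $\rho^m\in R=\langle\rho\rangle$ stabilising a $5$-cycle $C$ must act on $C$ as a rotation (reflections of odd cycles have a fixed vertex, incompatible with the semiregularity of $\rho$), forcing the five vertices of $C$ into a single $\rho$-orbit on $V(\G)$. Hence $C$ is a $0$-spoke $5$-cycle contained in the rim or hub; since $n\ge 7$ under our standing assumptions, the rim admits no $5$-cycle, while the hub does iff $n/\gcd(n,k)=5$, equivalently $5k\equiv 0\pmod n$, in which case all $n/5$ hub $5$-cycles form one $R$-orbit of length $n/5$. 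Consequently $N_2,N_4$ are multiples of $n$ and $N_0\in\{0,n/5\}$, with $N_0=n/5$ forcing $5k\equiv 0\pmod n$. Writing $N_2=nm$ and $N_0=\delta n/5$ with $\delta\in\{0,1\}$ then turns $10N_0+5N_2=2nc_5$ into $c_5=(5m+2\delta)/2$, so $m=2t$ is even and $c_5=5t+\delta$, with $t\ge 1$ by Lemma~\ref{le:five}.

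The main obstacle is the matching upper bound $t\le 2$. A crude first estimate combines (i) with the no-$4$-cycle restriction: for any induced $2$-path $p=(x,y,z)$, each induced $5$-cycle through $p$ is determined by its fourth vertex $w\in\G(z)\setminus\{y,c_{yz}\}$, where $c_{yz}$ denotes the unique common neighbour of $y$ and $z$; hence $f(p)\le 4$, and summing $\sum_{p\ni e}f(p)=2c_5$ over the eight induced $2$-paths through a fixed edge $e$ only yields $c_5\le 16$. To tighten this I would focus on the edge $u_0u_1$ and split its eight induced $2$-paths into the four that lie in generic $5$-cycles through $u_0u_1$ (those with external vertex $v_b$, $v_{b+k}$, $v_{b+1}$ or $v_{b+k+1}$) and the four that do not: $(u_{-1},u_0,u_1)$, $(v_0,u_0,u_1)$, $(u_0,u_1,u_2)$ and $(u_0,u_1,v_2)$. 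For each of the latter I would enumerate the four candidate extension vertices $w$ and argue, using (\ref{eq:aux1}) and the absence of $4$-cycles, that at least one of them fails to close a $5$-cycle, giving $f(p)\le 3$. The refined estimate $2c_5\le 4\cdot 4+4\cdot 3=28$, i.e.\ $c_5\le 14$, combined with $c_5\equiv\delta\pmod 5$ and $c_5\ge 5$, then forces $c_5\in\{5,10\}$ when $\delta=0$ and $c_5\in\{6,11\}$ when $\delta=1$. The hands-on verification that at least one candidate $w$ fails to extend for each of the four non-generic $2$-paths is where the bulk of the technical work lies and is the chief source of difficulty.

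Finally, (iv) is immediate: were $\Aut(\G)$ transitive on induced $2$-paths, $f(p)$ would take a common value $f$, so $2c_5=8f$ would force $4\mid c_5$, contradicting $c_5\in\{5,6,10,11\}$ from (iii).
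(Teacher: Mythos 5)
Parts (i), (ii) and (iv) are correct and essentially coincide with the paper's argument: (i) is the same no-$4$-cycle observation, your two double counts in (ii) are equivalent to the paper's edge count and spoke count (and give the same identity), and (iv) is exactly the paper's divisibility-by-$4$ argument. Your derivation of the congruence in (iii) -- nontrivial powers of $\rho$ can only stabilize a $5$-cycle lying entirely in one $\rho$-orbit, so $N_2,N_4$ are multiples of $n$ and $N_0\in\{0,n/5\}$ with $N_0\neq 0$ exactly when $5k=0$ -- also matches the paper, as does the lower bound $c_5\geq 5$ from Lemma~\ref{le:five}.

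The gap is in the upper bound needed to finish (iii). You reduce everything to the claim that each of the four $2$-paths $(u_{-1},u_0,u_1)$, $(v_0,u_0,u_1)$, $(u_0,u_1,u_2)$, $(u_0,u_1,v_2)$ lies on at most three $5$-cycles, but you do not prove this, and it is precisely where all the difficulty sits. It is not a routine verification: whether a given candidate fourth vertex extends to a $5$-cycle depends on arithmetic coincidences among the parameters (for example $b\in\{3,4\}$, $k\in\{2,3\}$, $b=k+3$, $b+k\in\{n-3,n-2\}$, and similar conditions all create additional $5$-cycles through exactly these $2$-paths), and at this point in the development you may not assume $k\neq 3$ or any later restriction on the parameters, since Lemma~\ref{le:k=3} and Proposition~\ref{pro:lambda1} are proved only afterwards and rely on this lemma. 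So your plan would require a full case analysis over these exceptional situations, and you have not shown that the asserted bound $f(p)\leq 3$ even holds in all of them. The paper avoids this by working with two differently chosen $2$-paths, $(u_0,v_{b+k},u_k)$ and $(v_0,u_0,v_1)$, whose possible extensions are so constrained by (\ref{eq:aux1}) that their $5$-cycle counts can be pinned down exactly (two, or three under the explicit conditions $k=3$ resp.\ $3k=\pm 1$), which both yields $c_5\notin\{15,16\}$ and is reused in the subsequent lemmas. As it stands, your proposal establishes only $c_5\leq 16$, which together with the congruence does not exclude $c_5=15$ (when $5k\neq 0$), so claim (iii), and hence your proof of (iv), is incomplete.
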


\begin{proof}
By Lemma~\ref{le:4-cyc} the graph $\G$ has no $4$-cycles and so the first claim is clear. Counting the number of pairs of an edge and a $5$-cycle containing it, and the number of pairs of a spoke and a $5$-cycle containing it, respectively, we get
\begin{equation}
\label{eq:c_5}
6nc_5 = 5(N_4 + N_2 + N_0)\quad \mathrm{and} \quad 4nc_5 = 4N_4 + 2N_2,
\end{equation}
which proves claim (ii). Now, observe that $N_0 \neq 0$ if and only if $5k = 0$ in $\ZZ_n$, in which case $N_0 = n/5$, while each $R$-orbit of a two-spoke $5$-cycle, as well as of a four-spoke $5$-cycle, is clearly of length $n$. Thus, if $5k \neq 0$, then the right hand side of the first equation in (\ref{eq:c_5}) is divisible by $5n$, and so $5$ divides $c_5$. If however $5k = 0$, then $6c_5 \equiv 1 \pmod{5}$, and so $c_5 \equiv 1 \pmod{5}$. 

Since all $5$-cycles are induced, (i) implies that there can be at most four $5$-cycles through any given induced $2$-path, and so $c_5 \leq 16$. Let us inspect the possible $5$-cycles through $(u_0,v_{b+k},u_k)$, where we let $C_3$ and $C_4$ be the corresponding generic $5$-cycles of types $g.3$ and $g.4$, respectively. Since $s_{C_3}(v_{b+k},u_0) = v_0$, $s_{C_4}(v_{b+k},u_0) = v_1$, $s_{C_3}(v_{b+k},u_k) = v_k$ and $s_{C_4}(v_{b+k},u_k) = v_{k+1}$ it is clear that the only way another $5$-cycle $C$ through $(u_0,v_{b+k},u_k)$ can exist is if $s_C(v_{b+k},u_0) \in \{u_{-1}, u_1\}$ and $s_C(v_{b+k},u_k) \in \{u_{k-1}, u_{k+1}\}$. In view of (\ref{eq:aux1}) the only possibility is that $k = 3$ and $C = (u_2,u_1,u_0,v_{b+k},u_k)$. In particular, the $2$-path $(u_0,v_{b+k},u_k)$ lies on two $5$-cycles, unless $k = 3$ in which case it lies on three $5$-cycles. This shows that $c_5 < 16$. A similar argument shows that the $2$-path $(v_0,u_0,v_1)$ also lies on two $5$-cycles, unless one of $3k - 1 = 0$ and $3k + 1 = 0$ holds, in which case it lies on three $5$-cycles. Thus, for $c_5 = 15$ to hold both $k = 3$ and one of $3k = 1$ and $3k = -1$ would need to hold, which by (\ref{eq:aux1}) is not possible (recall that $b \leq (n-1)/2$). Thus $c_5 < 15$, establishing claim (iii).

Finally, if $\Aut(\G)$ was transitive on the set of all induced $2$-paths of $\G$ then for any given edge $xy$ of $\G$ and the four neighbors $w_i$, $1 \leq i \leq 4$, of $y$, not adjacent to $x$, the number of $5$-cycles through $(x,y,w_i)$ would be the same, and so $c_5$ would be divisible by $4$. In view of (iii) this is not possible, and so (iv) holds.
\end{proof}

The next lemma will play a central role in the rest of this section. It shows that for any $2$-path, having the internal vertex of the form $v_i$ and consisting of a $b$- and a $(b+k)$-spoke, there are precisely two $5$-cycles through it and none of them contains a rim edge.

\begin{lemma}
\label{le:k=3}
Let $\G$ be as in Lemma~\ref{le:2AT}. Then $k > 3$ and the $2$-path $(u_0,v_{b+k},u_k)$ lies on precisely two $5$-cycles, namely the generic ones of types $g.3$ and $g.4$.
\end{lemma}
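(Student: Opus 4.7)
The second assertion follows immediately from the first: the analysis in the proof of Lemma~\ref{le:2AT}(iii) showed that any $5$-cycle through $(u_0, v_{b+k}, u_k)$ beyond the two generic ones of types g.3 and g.4 must coincide with $(u_2, u_1, u_0, v_{b+k}, u_k)$, whose existence as an induced $5$-cycle forces $k = 3$. So once $k > 3$ is established, the second part is immediate, and all the work lies in proving $k > 3$. By (\ref{eq:aux1}) we have $k \geq 2$, so I would rule out $k = 2$ and $k = 3$ separately.

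The case $k = 2$ is easy. I would exhibit the $4$-cycle $(u_0, v_{b+2}, u_2, u_1)$: its consecutive edges form, in order, a $(b+k)$-spoke, a $b$-spoke, and two rim edges, while the potential chords $u_0u_2$ and $u_1v_{b+2}$ are easily verified not to be edges using $b \geq 3$ and (\ref{eq:aux1}). By Lemma~\ref{le:4-cyc} this forces $\G \cong \N(12;1,3,10;5)$, contradicting the standing hypothesis and so ruling out $k = 2$.

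The case $k = 3$ is the main obstacle, since for $k = 3$ and generic $b$ no induced $4$-cycle exists, so Lemma~\ref{le:4-cyc} offers no direct help. My plan is to count $5$-cycles carefully. Assuming $k = 3$, the $R$-orbit $\O_C$ of $C = (u_2, u_1, u_0, v_{b+3}, u_3)$ is forced into $\G$, its members consisting of three rim edges, one $b$-spoke, and one $(b+k)$-spoke. Summing the contributions of the four generic orbits and $\O_C$ to each edge type yields $4$ cycles per $0$-spoke and per $1$-spoke, $5$ per $b$-spoke, per $(b+k)$-spoke, and per rim edge, but only $2$ per hub edge. Since $\G$ is edge-transitive, these counts must coincide in a common value $c_5$, and by Lemma~\ref{le:2AT}(iii) one has $c_5 \in \{5,10\}$; the sporadic divisibility case $5k \equiv 0$ reduces, under $b \geq 5$ and $n \geq 2b+1$, to $n = 15$, which can be dealt with directly since no edge-transitive example of that order appears. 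For $c_5 = 5$ each hub edge needs three further cycles using only $0$-spokes, $1$-spokes, and hubs, and a parity argument on the number of spokes in a $5$-cycle (which must be even) shows that any such cycle would require $3k \in \{0, \pm 1\}$ in $\ZZ_n$, incompatible with our parameter bounds. For $c_5 = 10$ one counts the $5$-cycles through the vertex $u_0$ (this count must equal $3c_5 = 30$) by summing over the $12$ induced $2$-paths through $u_0$, incorporating the generic orbits, $\O_C$, and the natural $R$-orbit of $(v_0, u_0, u_1, u_2, v_3)$; combining the numerical discrepancy with Lemma~\ref{le:2AT}(ii) and the tight edge-type distribution forces the existence of further $5$-cycle orbits whose possible structures can be enumerated (again by the parity/edge-type analysis) and shown to be incompatible with the generic parameters, yielding the desired contradiction and completing the proof that $k > 3$.
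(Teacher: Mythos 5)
Your reduction of the second assertion to $k>3$ is correct, and your treatment of $k=2$ is fine (indeed, the $4$-cycle $(u_0,u_1,u_2,v_{b+2})$ together with Lemma~\ref{le:4-cyc} disposes of it; the paper leaves this step implicit). The $c_5=5$ branch of your $k=3$ argument also works: since rim, $b$- and $(b+k)$-spokes already carry five $5$-cycles from the four generic orbits and the orbit of $(u_2,u_1,u_0,v_{b+3},u_3)$, any further $5$-cycle through a hub edge could use only $0$-spokes, $1$-spokes and hub edges, and the parity/monotonicity analysis (three consecutive hub steps cannot backtrack, and a four-spoke cycle on $0$- and $1$-spokes would force two $u$-vertices at rim distance one, contradicting inducedness) gives $3k\equiv\pm1$ or $5k\equiv 0$, impossible once $b\geq 5$ forces $n\geq 11$; for $5k\equiv0$, i.e.\ $n=15$, a cleaner exit than citing the table is to note that then $b\in\{5,6,7\}$, each of which produces a $4$-cycle, contradicting Lemma~\ref{le:4-cyc}.

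The genuine gap is the $c_5=10$ case, which is exactly where the difficulty of the lemma lies and which you dispose of in one sentence (``\ldots forces the existence of further $5$-cycle orbits whose possible structures can be enumerated \ldots and shown to be incompatible with the generic parameters''). This is an assertion, not an argument, and it is not clear it can be closed as stated: extra $5$-cycle orbits genuinely do exist for suitable parameters when $k=3$ (for instance the orbit of $(v_0,u_0,u_1,u_2,v_3)$ you mention exists for every such graph, so rim edges already lie on seven $5$-cycles and the whole burden is on excluding $c_5=10$; moreover orbits such as that of $(u_0,v_b,v_{b-3},v_{b-6},v_{b-9})$ or $(u_{-1},v_0,v_3,v_6,u_{-2})$ appear whenever $b$ satisfies appropriate congruences), so whether the incidence counts can reach $10$ on every edge class is a question about congruence conditions relating $b$ and $n$, and ruling out all combinations is an open-ended enumeration in your setup. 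The paper closes this case by a different mechanism that your proposal never invokes: the absence of $4$-cycles first forces $8\leq b\leq (n-7)/2$ (hence $n\geq 23$ and $5k\neq 0$), then Lemma~\ref{le:five} applied to the induced $2$-path $(v_0,v_3,v_6)$ of two consecutive hub edges, which must lie on some $5$-cycle, pins $b$ down to $\{8,9,10\}$, after which a direct count shows the edge $u_0v_b$ lies on exactly six $5$-cycles, contradicting $c_5\in\{5,10\}$. Without an ingredient of this kind (or a completed enumeration replacing it), your proof of $k>3$ is incomplete.
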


\begin{proof}
By way of contradiction suppose $k = 3$. By Lemma~\ref{le:4-cyc} the graph $\G$ has no $4$-cycles, and so none of the conditions from the table in the proof of Lemma~\ref{le:4-cyc} can hold, implying that $2b \leq n-7$. It is not difficult to verify that $b \geq 8$ also has to hold (note that (\ref{eq:aux1}) implies $b \geq 5$ while for instance if $b = 7$ the $4$-cycle $(u_0,v_7,v_4,v_1)$ exists). It follows that $8 \leq b \leq (n-7)/2$. Thus $n \geq 23$, and consequently $5k \neq 0$ in $\ZZ_n$. By Lemma~\ref{le:five} the $2$-path $(v_0,v_3,v_6)$ lies on a $5$-cycle. However, one can verify that $8 \leq b \leq (n-7)/2$ implies that this can only happen if $8 \leq b \leq 10$. One can now verify that the edge $u_0v_b$ lies on precisely six different $5$-cycles (four generic, the $5$-cycle $(u_0,v_b,u_{-3},u_{-2},u_{-1})$ and one of $(u_0,v_b,v_{b-3},v_{b-6},v_{b-9})$ and $(u_0,v_b,v_{b-3},v_{b-6},u_1)$, depending on whether $b \in \{9,10\}$ or $b=8$), contradicting Lemma~\ref{le:2AT}. Thus $k > 3$ and then the argument from the proof of Lemma~\ref{le:2AT} proves that $(u_0,v_{b+k},u_k)$ lies on precisely two $5$-cycles. 
\end{proof}

We next prove that the automorphism group of $\G$ is as small as possible in the sense that it acts regularly on the set of its arcs or edges, depending on whether $\G$ is arc-transitive or half-arc-transitive, respectively. Before proving this we make the following notational convention. Let $A = \Aut(\G)$. For a vertex $x$ of $\G$ the induced action of the stabilizer $A_x$ on the neighborhood $\G(x)$ is isomorphic to a subgroup of the group $H = \langle (1\,2), (1\,3\,5)(2\,4\,6), (1\,3)(2\,4)\rangle$, where the pairs $\{1,2\}$, $\{3,4\}$ and $\{5,6\}$ represent adjacent pairs of vertices. In the case that $\G$ is half-arc-transitive any of the two $A$-induced orientations of the edges of $\G$ is such that the $3$-cycles of $\G$ are directed, and so it is clear that the induced action of $A_x$ on $\G(x)$ is isomorphic to one of the subgroups $\langle (1\,3\,5)(2\,4\,6)\rangle$ and $\langle (1\,3)(2\,4), (3\,5)(4\,6)\rangle$. Suppose now that $\G$ is arc-transitive. By Lemma~\ref{le:2AT} the action of $A$ on the set of induced $2$-paths of $\G$ is not transitive, and so the induced action of $A_x$ on $\G(x)$ is isomorphic to one of the subgroups $\langle (1\,3\,5\,2\,4\,6) \rangle$, $\langle (1\,2)(3\,6)(4\,5), (1\,3\,5)(2\,4\,6) \rangle$ and $\langle (1\,3\,5\,2\,4\,6), (3\,6)(4\,5)\rangle$.

\begin{lemma}
\label{le:regular}
Let $\G$ be as in Lemma~\ref{le:2AT}. Then the stabilizer of any arc of $\G$ is trivial in $\Aut(\G)$. In other words, either $\G$ is arc-transitive and the action of $\Aut(\G)$ on the set of the arcs of $\G$ is regular or $\G$ is half-arc-transitive and the action of $\Aut(\G)$ on the set of the edges of $\G$ is regular.
\end{lemma}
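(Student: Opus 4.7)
The plan is to take $\psi \in \Aut(\G)$ stabilising the arc $(u_0, v_0)$ and show $\psi = e$. Since $\lambda = 1$, the edge $u_0v_0$ lies on the unique $3$-cycle $(u_{-1}, u_0, v_0)$, so $\psi$ fixes $u_{-1}$. The remaining four neighbours $\{u_1, v_1, v_b, v_{b+k}\}$ of $u_0$ split, via the other two $3$-cycles at $u_0$, into the adjacent pairs $\{u_1, v_1\}$ and $\{v_b, v_{b+k}\}$, and $\psi$ preserves this partition. Going through the classification of the possible induced actions of $A_{u_0}$ on $\G(u_0)$ given immediately before the lemma, the pointwise stabilizer of $\{u_{-1}, v_0\}$ within each candidate subgroup is checked directly to be either trivial (in the arc-transitive cyclic and $S_3$ cases and the half-arc-transitive cyclic case) or generated by a unique involution swapping the pairs $\{u_1, v_1\}$ and $\{v_b, v_{b+k}\}$ (the arc-transitive $D_6$ case, via $(3\,6)(4\,5)$, and the half-arc-transitive $S_3$ case, via $(3\,5)(4\,6)$, with $3,4,5,6$ corresponding to $u_1, v_1, v_b, v_{b+k}$). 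Hence, if $\psi$ acts nontrivially on $\G(u_0)$, then $\psi(v_{b+k}) \in \{u_1, v_1, v_b\}$.

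I would rule out this alternative using Lemma~\ref{le:k=3}, which says the $2$-path $(u_0, v_{b+k}, u_k)$ lies on exactly two induced $5$-cycles, namely $C_3 = (u_0, v_{b+k}, u_k, v_k, v_0)$ and $C_4 = (u_0, v_{b+k}, u_k, v_{k+1}, v_1)$, of which precisely one, $C_3$, passes through the fixed edge $u_0 v_0$. The image $2$-path $(u_0, \psi(v_{b+k}), \psi(u_k))$ must therefore again lie on exactly two induced $5$-cycles, one of them containing $u_0v_0$. For each of the three possibilities $\psi(v_{b+k}) \in \{u_1, v_1, v_b\}$ I would enumerate the induced $5$-cycles through the corresponding $2$-paths $(u_0, y, z)$, using the absence of $4$-cycles (Lemma~\ref{le:4-cyc}), the uniqueness of the $3$-cycle through each edge, and the list of generic $5$-cycles introduced just before Lemma~\ref{le:five}, and show that the number of such induced $5$-cycles, or the structure of the one containing $u_0v_0$, is incompatible with what Lemma~\ref{le:k=3} prescribes. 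This contradiction forces $\psi$ to act trivially on $\G(u_0)$.

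Once $\psi$ is trivial on $\G(u_0)$, it fixes $v_{b+k}$, and as the unique common neighbour of $u_0$ and $v_{b+k}$ in $\G$ is $v_b$ (easy to verify from the list of neighbours of each), $\psi$ also fixes $v_b$. Invoking Lemma~\ref{le:k=3} once more, $\psi$ must fix $C_3$ pointwise since $C_3$ is the only one of $\{C_3, C_4\}$ containing the fixed vertex $v_0$, giving $\psi(u_k)=u_k$ and $\psi(v_k)=v_k$. Conjugating this propagation by suitable powers of $\rho$ and iterating around the graph, $\psi$ fixes every vertex of $\G$, so $\psi = e$, and the regularity statement on arcs (in the arc-transitive case) or on edges (in the half-arc-transitive case) follows. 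The main obstacle is the middle case-by-case enumeration: arithmetic coincidences among $n$, $b$ and $k$ can produce non-generic induced $5$-cycles through the $2$-paths $(u_0, y, z)$, so verifying that none of the three candidate choices of $y$ carries exactly the $5$-cycle pattern demanded by Lemma~\ref{le:k=3} requires careful modular bookkeeping.
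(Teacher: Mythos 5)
Your overall architecture is reasonable and in the same spirit as the paper (which also reduces to the local-action classification stated before the lemma and then kills the ``pair-swapping'' involution by a $5$-cycle analysis based on Lemma~\ref{le:k=3}), but the proposal has a genuine gap exactly where all the work lies. The paper's choice of configuration is not incidental: it takes $\psi$ fixing $u_0$ and $v_{b+k}$ and swapping the two remaining adjacent pairs \emph{at $v_{b+k}$}, so that Lemma~\ref{le:k=3} applies directly to the $2$-path $(u_0,v_{b+k},u_k)$ centered at a fixed vertex, and even then the contradiction in each of the two cases $u_k\psi\in\{u_{b+k},u_{b+k-1}\}$ is obtained only after chasing the images of half a dozen further vertices ($v_k$, $v_{k+1}$, $u_{\pm(b-1)}$, etc.), using which spoke types an edge can be mapped to, and landing on arithmetic contradictions such as $b+3=0$ or $2b+k-1=0$. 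In your version the image $2$-path $(u_0,\psi(v_{b+k}),\psi(u_k))$ is centered at $u_1$ or $v_1$, and no lemma controls the $5$-cycles through such $2$-paths; the criterion you propose (``exactly two induced $5$-cycles, one of them through $u_0v_0$'') is not known to fail there --- indeed many induced $2$-paths of $\G$ do lie on exactly two $5$-cycles (see the counts in the proof of Lemma~\ref{le:2AT}), and non-generic $5$-cycles created by modular coincidences are precisely what you would have to control. You explicitly defer this enumeration, so the central contradiction is asserted rather than proved; this step is the proof, not a routine verification.

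Two further points. First, even if the swap at $u_0$ were eliminated, your propagation contains an unjustified step: you cannot conclude that $\psi$ preserves the set $\{C_3,C_4\}$ (and hence fixes $C_3$ pointwise) without already knowing $\psi(u_k)=u_k$, since $\{C_3,C_4\}$ is defined as the set of $5$-cycles through the $2$-path $(u_0,v_{b+k},u_k)$; what you actually get from fixing $u_0$, $v_b$, $v_{b+k}$ and the local classification at $v_{b+k}$ is again the dichotomy ``trivial on $\G(v_{b+k})$ or swap the pairs $\{u_k,v_{b+2k}\}$ and $\{u_{b+k},u_{b+k-1}\}$'' --- and the latter is exactly the configuration the paper spends its two cases ruling out. (The propagation itself can be repaired more cheaply: once the swap is excluded for the arc $(u_0,v_0)$, transitivity transfers ``trivial on $\G(u_i)$'' to every $0$-spoke, and since $u_{i+1},v_{i+1}\in\G(u_i)$ one fixes all $u_i$ and all $v_i$ by induction along the rim; but this still presupposes the missing middle step.) Second, your inclusion of $\psi(v_{b+k})=v_b$ among the possibilities is not produced by your own group-theoretic analysis (the unique nontrivial element fixing the pair $\{u_{-1},v_0\}$ pointwise swaps the two other pairs), so it only adds a spurious case; harmless, but a sign that the local analysis should be tightened before the case distinction is attempted.
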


\begin{proof}
Denote $A = \Aut(\G)$ and suppose the claim of this lemma does not hold. Then the above remarks imply that for each pair of adjacent vertices $x$ and $y$ of $\G$ there exists an element $\psi \in A$ fixing both $x$ and $y$ but interchanging $\{w_1,w_2\}$ with $\{z_1,z_2\}$, where $(y,w_1, w_2)$ and $(y,z_1,z_2)$ are $3$-cycles. Moreover, the restriction of $\psi^2$ to $\G(y)$ is the identity. 

Now, let $x = u_0$ and $y = v_{b+k}$ and let $\psi \in A$ be an automorphism as described in the previous paragraph. Since the unique common neighbor of $x$ and $y$ is $v_b$ we have $v_b \psi = v_b$. Lemma~\ref{le:k=3} implies that there are exactly two $5$-cycles through $(u_0,v_{b+k},u_k)$, namely the generic $5$-cycles $C_3 = (u_0,v_{b+k},u_k,v_k,v_0)$ (of type $g.3$) and $C_4 = (u_0,v_{b+k},u_k,v_{k+1},v_1)$ (of type $g.4$). By assumption, $\psi$ maps the adjacent pair $\{u_k,v_{b+2k}\}$ to the adjacent pair $\{u_{b+k}, u_{b+k-1}\}$ and vice versa. We now analyze the two possibilities regarding $u_k\psi$.
\medskip

\noindent
Case 1: $u_k\psi = u_{b+k}$.\\
Since $C_2 = (u_0,v_{b+k},u_{b+k},v_{b+k+1},u_1)$ is a $5$-cycle through $(u_0,v_{b+k},u_{b+k})$, it follows that either $C_3\psi = C_2$ or $C_4\psi = C_2$ holds. Thus one of $v_0\psi$ and $v_1\psi$ is $u_1$. It follows that $\psi$ fixes $u_0$ and its neighbor $v_{b+k}$, but does not fix all of the neighbors of $u_0$, and so the remarks preceding this lemma imply that $\psi$ interchanges the adjacent pairs $\{u_{-1}, v_0\}$ and $\{v_1,u_1\}$. This implies that $v_1\psi \neq u_1$, and so $\psi$ interchanges $v_0$ with $u_1$ (and consequently $u_{-1}$ with $v_1$). Then $C_3\psi = C_2$, and so $v_k\psi = v_{b+k+1}$. The common neighbor $u_{-b}$ of $v_0$ and $v_k$ is thus mapped by $\psi$ to $v_{b+1}$. To determine the image $C_4\psi$ we need to determine the image $v_{k+1}\psi$, which must be a neighbor of $u_{b+k}$. Now, since $v_{k+1}$ is not the common neighbor of $v_k$ and $u_k$, we have $v_{k+1}\psi \neq u_{b+k+1}$, and so the fact that $C_4\psi$ is induced implies that $v_{k+1}\psi \in \{v_{2b+k}, v_{2b+2k}\}$. Since $C_4\psi$ contains the rim edge $u_{-1}u_0$, Lemma~\ref{le:k=3} then implies that the $\psi$-image of the $2$-path $(u_k,v_{k+1},v_1)$ of $C_4$, that is $(u_{b+k}, v_{k+1}\psi, u_{-1})$, does not consist of a $b$- and a $(b+k)$-spoke, and so the edge $v_{k+1}v_1$ has to be mapped to a $0$-spoke (it cannot be a $1$-spoke since we already know that $v_0 = u_1\psi$), that is $v_{k+1}\psi = v_{-1}$. This finally proves that $\psi$ maps the common neighbor $u_{1-b}$ of $v_1$ and $v_{k+1}$ to the common neighbor $u_{-2}$ of $u_{-1}$ and $v_{-1}$. But then $u_{-2}$ is adjacent to $v_{b+1} = u_{-b}\psi$ and it has to be via a $0$-spoke (recall that $k \neq 3$), so that $b + 3 = 0$, contradicting $b \leq (n-1)/2$.
\medskip

\noindent
Case 2: $u_k\psi = u_{b+k-1}$.\\
Most of the argument is very similar to the one in the previous case so we leave some details to the reader. Since $C'_2 = (u_0,v_{b+k},u_{b+k-1},v_{b+k-1},u_{-1})$ is a $5$-cycle through $(u_0,v_{b+k},u_{b+k-1})$ we deduce that $C_4\psi = C'_2$, and so $\psi$ interchanges $v_1$ with $u_{-1}$ and $v_0$ with $u_1$, and maps $v_{k+1}$ of $C_4$ to $v_{b+k-1}$ of $C'_2$. Since $v_{k}$ is not the common neighbor of $u_k$ and $v_{k+1}$, it follows that $v_k\psi \in \{v_{2b+k-1}, v_{2b+2k-1}\}$, implying that the edge $v_0v_k$ cannot be mapped to a $b$- or a $(b+k)$-spoke. It is thus a $1$-spoke, and so $v_k\psi = v_2$. The common neighbor $u_{-b}$ of $v_0$ and $v_k$ is thus mapped to $u_2$, while the common neighbor $u_{-b+1}$ of $v_1$ and $v_{k+1}$ is mapped to $v_{b-1}$. Since $k \leq n-5$ the edge $u_2v_{b-1}$ must be a $1$-spoke, and so $b = 4$. Then the common neighbor $v_{-b+1}$ of $u_{-b}$ and $u_{-b+1}$ is mapped to $u_3$, which is a neighbor of the fixed vertex $v_b = v_4$. Since $v_{b+k}$ is also fixed, it follows that $v_{-b+1} = v_{b-k}$, and so $k = 7$. Moreover, the remarks preceding this lemma imply that $\psi$ in fact interchanges $v_{-b+1} = v_{-3}$ with $u_3$. But then the adjacent vertices $u_3$ and $v_7$ are mapped to $v_{-3}$ and $v_2$, respectively (recall that $v_7 = v_k$), implying that $-3-7 = 2$ in $\ZZ_n$, that is $n = 12$. However, as $b+k = 11$, this is impossible.
\end{proof}

\begin{lemma}
\label{le:HAT}
Let $\G$ be as in Lemma~\ref{le:2AT}. Then $\G$ is half-arc-transitive and the stabilizer of any vertex of $\G$ is of order $3$. 
\end{lemma}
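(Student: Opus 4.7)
The plan is as follows. By Lemma~\ref{le:regular}, $A=\Aut(\G)$ acts regularly either on arcs (if $\G$ is arc-transitive) or on edges (if $\G$ is half-arc-transitive), and the corresponding vertex stabilizers have order $6$ or $3$, respectively. Since $\G$ is edge-transitive by assumption, the task reduces to ruling out arc-transitivity.

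So assume for contradiction that $\G$ is arc-transitive. Regularity on arcs makes $A_x$ act faithfully on $\G(x)$, and by the discussion preceding Lemma~\ref{le:regular} the induced action must be isomorphic either to $\mathbb{Z}_6$ (generated by $(1\,3\,5\,2\,4\,6)$) or to $S_3$ (generated by $(1\,3\,5)(2\,4\,6)$ and $(1\,2)(3\,6)(4\,5)$). In both cases $A_x$ is transitive on the six neighbors and preserves the partition of $\G(x)$ into the three ``$3$-cycle pairs''. Taking $x=v_{b+k}$, these pairs are $P_1=\{u_0,v_b\}$, $P_2=\{u_k,v_{b+2k}\}$ and $P_3=\{u_{b+k-1},u_{b+k}\}$, and a direct check with the explicit generators shows that in either case the $A_x$-orbit of the unordered pair $\{u_0,u_k\}$ contains $\{v_b,v_{b+2k}\}$ (via $(1\,2)(3\,4)(5\,6)$ in the $\mathbb{Z}_6$ case and via $(1\,4)(2\,3)(5\,6)$ in the $S_3$ case). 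Consequently, some element of $A_x$ maps the $2$-path $(u_0,v_{b+k},u_k)$ to $(v_b,v_{b+k},v_{b+2k})$.

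By Lemma~\ref{le:k=3} the $2$-path $(u_0,v_{b+k},u_k)$ lies on exactly two induced $5$-cycles, namely the generic ones of types $g.3$ and $g.4$. Hence the $2$-path of two hub edges $(v_b,v_{b+k},v_{b+2k})$ must also lie on exactly two induced $5$-cycles. Every such $5$-cycle has the form $(y,v_b,v_{b+k},v_{b+2k},w)$ with $y\in\{v_{b-k},u_b,u_{b-1},u_{-k}\}$, $w\in\{v_{b+3k},u_{b+2k},u_{b+2k-1},u_{2k}\}$ and $y\sim w$. Each of the sixteen potential adjacencies $y\sim w$ is a rim, hub or spoke edge of $\G$, and so corresponds to a specific linear equation among $b$, $k$ and $n$.

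The remaining task is a case analysis of these sixteen equations. Most are ruled out immediately by (\ref{eq:aux1}), by $k>3$ (Lemma~\ref{le:k=3}) or by the absence of $4$-cycles (Lemma~\ref{le:4-cyc}). The handful of configurations that survive fix $n$, $b$ and $k$ up to finitely many values, and for each we apply the same principle to a second $2$-path in the same $A_x$-orbit as $\{u_0,u_k\}$ (for instance $(u_0,v_{b+k},u_{b+k})$ in the $\mathbb{Z}_6$ case, where $\{1,6\}$ lies in the same orbit as $\{1,3\}$, and $(u_0,v_{b+k},u_{b+k-1})$ in the $S_3$ case, where it is $\{1,5\}$ that shares the size-$6$ orbit): this $2$-path must likewise carry exactly two $5$-cycles. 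A direct verification shows that no parameter combination meets both requirements simultaneously, so $\G$ cannot be arc-transitive and is therefore half-arc-transitive with $|A_x|=3$. The main obstacle is keeping the case analysis bounded; this is achieved by exploiting the symmetry that exchanges the two endpoints of the central $2$-path, which pairs the sixteen adjacency equations into eight equivalence classes.
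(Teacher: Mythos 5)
Your reduction to ruling out arc-transitivity via Lemma~\ref{le:regular} is exactly the paper's, and your key transport step is sound in the $\ZZ_6$ case: there the unique involution of a regular cyclic local group must swap the two vertices inside each $3$-cycle pair, so it does carry $(u_0,v_{b+k},u_k)$ to $(v_b,v_{b+k},v_{b+2k})$. The genuine gap is in the $S_3$ case. The local action is only known up to a relabelling of $\G(v_{b+k})$ preserving the three pairs, and the property you need, namely that $\{u_0,u_k\}$ and $\{v_b,v_{b+2k}\}$ lie in one $A_x$-orbit, is \emph{not} invariant under such relabellings, so it cannot be read off from one labelled representative. Concretely, model the local group as $S_3$ acting on itself by right multiplication, with the $3$-cycle pairs being the right cosets of $\{e,t\}$ for an involution $t$; the twelve non-pair $2$-subsets split into orbits of sizes $3,3,6$, a pair $\{g,h\}$ lying in the size-$6$ orbit exactly when $g^{-1}h$ has order $3$. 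Identify $u_0$ with $e$ and $v_b$ with $t$, and let $\{u_k,v_{b+2k}\}$ be the coset $\{r,tr\}$. If $u_k$ corresponds to $r$ your claim holds, but if $u_k$ corresponds to $tr$ then $\{u_0,u_k\}=\{e,tr\}$ and $\{v_b,v_{b+2k}\}=\{t,r\}$ lie in the two \emph{different} size-$3$ orbits, and no element of $A_x$ maps one $2$-path to the other. Your check with the generators $(1\,3\,5)(2\,4\,6)$ and $(1\,2)(3\,6)(4\,5)$ silently assumes the labelling $u_0=1$, $v_b=2$, $u_k=3$, $v_{b+2k}=4$ can be realized, which is precisely what would need proof; note that the weaker fact the paper does use (that $(x,y,w_1)$ and $(x,y,w_2)$ are inequivalent when $w_1\sim w_2$) \emph{is} invariant under pair-preserving relabellings, which is why it is legitimate. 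The same labelling-dependence undermines your choice of the ``second $2$-path in the same orbit'' in both cases (e.g.\ whether $\{1,6\}$ or $\{1,5\}$ shares the orbit of $\{1,3\}$ in the $\ZZ_6$ case is not canonical).

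Beyond this, the contradiction itself is only asserted. Statements such as ``most are ruled out immediately'' and ``a direct verification shows that no parameter combination meets both requirements'' carry the entire weight of the proof, and several of your sixteen adjacency equations (for instance $3k=\pm1$, $2k=\pm1$, $b=2k$, $b+3k=\pm1$, $5k=0$) are \emph{not} excluded by (\ref{eq:aux1}), by $k>3$, or by Lemma~\ref{le:4-cyc}, so a substantial multi-case argument remains, followed by a second, equally unexhibited, round for the auxiliary $2$-path. For comparison, the paper avoids the local-group dichotomy altogether: arc-regularity yields a unique automorphism $\alpha$ fixing $u_0$ with $v_b\alpha=v_{b+k}$, which is forced to be an involution, and the four possible images of $u_k$ are eliminated one by one using the two $5$-cycles of Lemma~\ref{le:k=3} together with the orbit-separation fact above; each elimination is a nontrivial chain of deductions ending in a numerical contradiction. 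As it stands, your proposal is a plausible plan but not a proof: the $S_3$ branch of the orbit claim is unjustified and the decisive case analysis is missing.
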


\begin{proof}
In view of Lemma~\ref{le:regular} we only need to prove that $\G$ is half-arc-transitive. By way of contradiction suppose $\G$ is arc-transitive. The remarks preceding Lemma~\ref{le:regular} and the fact that by Lemma~\ref{le:2AT} the action of $\Aut(\G)$ on the set of induced $2$-paths of $\G$ is not transitive imply that for each pair of adjacent vertices $x$ and $y$ and a pair of adjacent neighbors $w_1$ and $w_2$ of $y$, different from $x$, the induced $2$-paths $(x,y,w_1)$ and $(x,y,w_2)$ are not in the same $\Aut(\G)$-orbit.

By Lemma~\ref{le:regular} the action of $\Aut(\G)$ on the set of arcs of $\G$ is regular, and so there is a unique automorphism $\alpha$ of $\G$, fixing $u_0$ and mapping $v_b$ to $v_{b+k}$. Thus $v_{b+k}\alpha = v_b$, and so $\alpha$ is an involution. Since $\rho$ (from (\ref{eq:rho})) is semiregular, we have that $\alpha \notin \langle \rho \rangle$. Observe first that $u_k\alpha \neq v_{b-k}$, since otherwise the two paths $(u_k,v_{k+b},u_0)$ and $(u_k,v_{b+k},v_b) = (u_0,v_b,v_{b-k})\rho^k$ would be in the same $\Aut(\G)$-orbit. We now analyze the other three possibilities for $u_k\alpha$. In the analysis we will be working with the $5$-cycles $C_3 = (u_0,v_{b+k},u_k,v_k,v_0)$ and $C_4 = (u_0,v_{b+k},u_k,v_{k+1},v_1)$, which by Lemma~\ref{le:k=3} are the only $5$-cycles through $(u_0,v_{b+k},u_k)$. 

Suppose first that $u_k\alpha = u_{-k}$. Then $C_3\alpha$ and $C_4\alpha$ are the two generic $5$-cycles through $(u_0,v_b,u_{-k})$, and since $\alpha \neq \rho^{-2k}$, the arc $(u_k,v_k)$ has to be mapped to $(u_{-k},v_{-k+1})$. Thus $\alpha$ interchanges $v_{k+1}$ with $v_{-k}$, $v_k$ with $v_{-k+1}$ and $v_0$ with $v_1$, and thus also the common neighbor $u_1$ of $u_0$ and $v_1$ with $u_{-1}$, and similarly $u_{k-1}$ with $u_{-k+1}$ and $u_{k+1}$ with $u_{-k-1}$. But then $u_0\alpha\rho\alpha = u_{-1}$ and $u_1\alpha\rho\alpha = u_0$, so that $\alpha\rho\alpha = \rho^{-1}$. Thus $v_{b+k} = v_b\alpha = v_0\rho^b\alpha = v_0\alpha\rho^{-b} = v_{1-b}$, implying that $2b+k-1 = 0$, which contradicts Lemma~\ref{le:4-cyc}.

Suppose next that $u_k\alpha = u_b$ and let $C_1 = (u_0,v_b,u_b,v_{b+1},u_1)$ be the generic $5$-cycle of type $g.1$ through $(u_0,v_b,u_b)$. Then $C_1 = C_3\alpha$ must hold, since in the case of $C_1 = C_4\alpha$ the arc $(u_k,v_{k+1})$ would be mapped by $\alpha$ to the arc $(u_b,v_{b+1})$, and so regularity of the action of $\Aut(\G)$ on the set of the arcs of $\G$ would imply $\alpha = \rho^{b-k}$, which is not the case. It follows that $v_k\alpha = v_{b+1}$ and $v_0\alpha = u_1$, and thus $\alpha$ also interchanges $v_1$ with $u_{-1}$ and $u_{k-1}$ with $u_{b+1}$. Now, observe that since $v_{k+1}$ is a neighbor of $u_k$ but is not adjacent to $v_{b+k}$, we have that $v_{k+1}\alpha \in \{v_{2b}, v_{2b+k}\}$. But if $v_{k+1}\alpha = v_{2b+k}$ then the $2$-path $(v_{b},u_b,v_{2b+k})$ is in the same $\Aut(\G)$-orbit as $(v_{k+1},u_k,v_{b+k})$, which is in the same $\Aut(\G)$-orbit as $(u_{b-1},u_b,v_{2b+k})$ (apply $\rho^{-k}\alpha\rho^b$), a contradiction. Thus $\alpha$ interchanges $v_{k+1}$ with $v_{2b}$, and so $C_4\alpha = (u_0,v_b,u_b,v_{2b},u_{-1})$ is a $5$-cycle containing a rim edge, so that the edge $u_{-1}v_{2b}$ must be a $0$-spoke (confront Lemma~\ref{le:k=3}). Thus $2b+1 = 0$. But now the $2$-path $(u_0,v_b,u_{b})$, which is in the same $\Aut(\G)$-orbit as $(u_0,v_{b+k},u_k)$ is contained on at least three $5$-cycles ($C_3\alpha$, $C_4\alpha$ and $C_4\alpha\rho^{-b}$), contradicting Lemma~\ref{le:k=3}.

Suppose finally that $u_k\alpha = u_{b-1}$ and let $C'_1 = (u_0,v_b,u_{b-1},v_{b-1},u_{-1})$ be the generic $5$-cycle of type $g.1$ through $(u_0,v_b,u_{b-1})$. The argument is very similar to the one in the previous paragraphs, so we omit some details. We first find that $\alpha$ interchanges $v_{b+2k}$ with $u_b$ and that $C'_1 = C_4\alpha$, implying that $\alpha$ interchanges $v_{k+1}$ with $v_{b-1}$, $u_{k+1}$ with $u_{b-2}$, $v_1$ with $u_{-1}$ and $v_0$ with $u_1$. Moreover, $v_k\alpha \in \{v_{2b-1},v_{2b+k-1}\}$ and $\alpha$ maps the edge $v_0v_{k}$ from $C_3$ to the $1$-spoke $u_1v_2$, so that either $2b-1 = 2$ or $2b+k-1 = 2$. Since the former contradicts $b \leq (n-1)/2$, we have that $\alpha$ interchanges $v_k$ with $v_{2b+k-1}$, and thus also $u_{k-1}$ with $v_{2b-1}$. But then $\alpha\rho^k\alpha\rho^{1-b}$ maps the $2$-path $(v_1,u_0,v_{b+k})$ to the $2$-path $(v_b,u_0,v_1)$, a contradiction.
\end{proof}

We are now finally ready to classify the edge-transitive Nest graphs with $\lambda = 1$.

\begin{proposition}
\label{pro:lambda1}
Let $\G = \N(n;a,b,c;k)$ be a Nest graph. Then $\G$ is edge-transitive of girth $3$ with $\lambda = 1$ if and only if $\G$ is isomorphic to the graph $\N(2m;1,b,b+m+1;m-1)$, where $b = 4b_0-1$ for some $b_0 \geq 1$ and $m > 2$ is an even divisor of $b^2+3$ with $m \equiv 2 \pmod{4}$ and $b < 2m$. Moreover, $\G$ is half-arc-transitive with the vertex stabilizers of order $3$ except for the graph $\N(12;1,3,10;5)$ which is arc-transitive with vertex stabilizers of order $6$.
\end{proposition}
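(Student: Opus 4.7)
The plan is to use Lemmas~\ref{le:4-cyc}--\ref{le:HAT}: apart from the sporadic graph $\N(12;1,3,10;5)$ (verified directly from Table~\ref{tab:all} to be arc-transitive with vertex stabilisers of order $6$, and arising in the family at $b_0=1$, $m=6$), $\G$ is half-arc-transitive with vertex stabilisers of order exactly $3$. Let $\alpha\in\Aut(\G)$ be of order $3$ with $\alpha(u_0)=u_0$. Since $\lambda=1$, $\alpha$ cyclically permutes the three $3$-cycles through $u_0$, namely $(u_{-1},u_0,v_0)$, $(u_0,u_1,v_1)$ and $(u_0,v_b,v_{b+k})$. Fix the natural $\Aut(\G)$-induced orientation so that $u_i\to u_{i+1}$ along the rim; $\rho$-equivariance then forces $u_i\to v_i$ on $0$-spokes and $v_{i+1}\to u_i$ on $1$-spokes, and after possibly applying the $k\mapsto -k$ symmetry of Lemma~\ref{le:isom} we may also assume $u_i\to v_{i+b}$, $v_j\to v_{j+k}$ and $v_j\to u_{j-b-k}$. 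As $\alpha$ preserves the orientation and cyclically permutes the three directed $3$-cycles at $u_0$, after possibly replacing $\alpha$ by $\alpha^2$ we obtain $\alpha:u_1\mapsto v_0\mapsto v_b\mapsto u_1$ and $v_1\mapsto u_{-1}\mapsto v_{b+k}\mapsto v_1$.

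With this base case fixed, the approach is to propagate $\alpha$ to all other vertices inductively. At each vertex $x$ for which $\alpha(x)$ is already known, the three $3$-cycle pairs at $x$ must be sent bijectively, preserving in/out, to the three pairs at $\alpha(x)$; one pair is always matched by prior information, leaving only a binary choice for the other two. The absence of $4$-cycles (Lemma~\ref{le:4-cyc}) and the closure $\alpha^3=\mathrm{id}$ repeatedly eliminate branches. Tracing the rim edge $u_1u_2$ immediately forces $\alpha(u_2)=v_k$, and then $\alpha(u_{-k})=u_2$ since $\alpha$ has order $3$. Continuing in parallel along the rim and the hub, each $\alpha(u_i)$ and each $\alpha(v_j)$ is successively determined as a specific linear expression in $i,j,b,k$ in $\ZZ_n$.

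The closure conditions $\alpha^3=\mathrm{id}$ on the resulting orbits translate into congruences in $\ZZ_n$. The shortest of them forces $2k\equiv 2\pmod n$, so that $n=2m$ is even and $k\equiv 1\pmod m$; after renormalising via Lemma~\ref{le:isom} to $k<n/2$ we obtain $k=m-1$, $c=b+m+1$, and hence $\G\cong\N(2m;1,b,b+m+1;m-1)$. The non-degeneracy assumptions $3\leq b\leq (n-1)/2$ together with $m\equiv 2\pmod 4$ then automatically give $m>2$. The remaining closure relations force $b$ odd with $b\equiv 3\pmod 4$, i.e.~$b=4b_0-1$ for some $b_0\geq 1$, and finally the divisibility $m\mid b^2+3$. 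For the converse, given admissible parameters the formulas produced by the propagation define a permutation $\alpha$ of $V(\G)$ which one checks directly to be an automorphism of order $3$ of $\N(2m;1,b,b+m+1;m-1)$; together with $\rho$ it generates a group acting transitively on the edges of $\G$, and since no automorphism of a half-arc-transitive graph can reverse an edge, $\G$ is half-arc-transitive in all but the arc-transitive sporadic case. The principal obstacle is the meticulous case analysis in the propagation step: although every individual branching is binary, tracking the compatibility constraints through the alternation of rim and hub steps until the single arithmetic condition $m\mid b^2+3$ crystallises requires patient bookkeeping.
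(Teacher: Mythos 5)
Your proposal reproduces the correct skeleton (reduce via Lemma~\ref{le:HAT} to the half-arc-transitive case with vertex stabilizers of order $3$, treat $\N(12;1,3,10;5)$ separately, fix the induced orientation, analyse an order-$3$ automorphism $\alpha$ generating a vertex stabilizer, extract congruences, and for the converse exhibit $\alpha$ explicitly together with $\rho$), and your normalisation of $\alpha$ at $u_0$ is fine. The genuine gap is in the propagation step, which is where essentially all of the work lies. Already your first forcing claim is unjustified: knowing $\alpha\colon u_1\mapsto v_0$, $v_1\mapsto u_{-1}$, the directed $3$-cycle $(u_1,u_2,v_2)$ may a priori map to either of the two remaining directed $3$-cycles at $v_0$, namely $(u_{-b},v_0,v_k)$ or $(u_{-b-k},v_{-k},v_0)$; both choices respect the orientation, send induced $2$-paths to induced $2$-paths, and create no $4$-cycle on the spot, so neither Lemma~\ref{le:4-cyc} nor $\alpha^3=\mathrm{id}$ ``immediately forces'' $\alpha(u_2)=v_k$. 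Likewise ``$\alpha(u_{-k})=u_2$ since $\alpha$ has order $3$'' silently assumes $\alpha(v_k)=u_{-k}$ rather than $u_{b-1}$, which is a further unresolved binary branch of the same kind. The paper resolves precisely such ambiguities not by triangle-propagation but by the $5$-cycle information of Lemma~\ref{le:k=3} (a $2$-path formed by a $b$- and a $(b+k)$-spoke lies on exactly the two generic $5$-cycles, neither containing a rim edge) combined with a careful analysis of the $\Aut(\G)$-orbit of $2$-paths, which eventually yields $\alpha^{-1}\rho^2\alpha=\rho^{b-1}$; only then do the relations $2k=2$, $\langle b-1\rangle=\langle 2\rangle$, $b\equiv 3\pmod 4$ and $m\mid b^2+3$ fall out. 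In your text these arithmetic conclusions are asserted as outcomes of ``closure conditions'' that are never written down, so the hard direction is a plan rather than a proof, and nothing shows the branch eliminations go through with only the tools you cite.

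The converse half has the same defect: it relies on ``formulas produced by the propagation'' that were never produced. In particular you do not address why the resulting map is a bijection of $V(\G)$, which requires $\langle b-1\rangle=\langle 2\rangle$ in $\ZZ_{2m}$ (this is exactly where $m\mid b^2+3$ and $m\equiv 2\pmod 4$ enter in the paper), nor the adjacency check, which in the paper uses the identity $2b_0(b-1)=m-2$. A minor additional imprecision: passing to the orientation $u_i\to v_{i+b}$ is not just the $k\mapsto -k$ isomorphism of Lemma~\ref{le:isom}; one must simultaneously interchange the roles of $b$ and $b+k$ (as the paper does by setting $b'=b+k$, $k'=n-k$). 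None of this means your route (local propagation along the triangle decomposition) is unworkable, but as written the decisive steps are missing, and they are exactly the steps the paper's proof is built to supply.
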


\begin{proof}
The last part follows from Lemma~\ref{le:HAT} (the fact that the graph $\N(12;1,3,10;5)$ is indeed arc-transitive with vertex stabilizers of order $6$ can easily be verified), so we only need to prove the first part. 

Suppose first that $\G$ is edge-transitive of girth $3$ with $\lambda = 1$. By Lemma~\ref{le:HAT} the graph $\G$ is either isomorphic to $\N(12;1,3,10;5)$, in which case we can take $b_0 = 1$ and $m = 6$, or is half-arc-transitive with vertex stabilizers of order $3$. We can thus assume that the latter holds. Fix the $\Aut(\G)$-induced orientation of the edges of $\G$ in which $u_0 \to u_1$. Now, if $u_0 \ot v_b$ holds then Lemma~\ref{le:rotation} implies that $u_0 \to v_{b+k}$, and so setting $b' = b+k$ and $k' = n-k$ we have that $\G = \N(n;1,b',b'+k';k')$ with $u_0 \to v_{b'}$. With no loss of generality we can thus assume that $u_0 \to v_b$ holds. Note however, that we can now no longer assume that $b \leq (n-1)/2$ nor that $b+k < n$. Using Lemma~\ref{le:rotation} and the action of the automorphism $\rho$ from (\ref{eq:rho}) we thus find that for each $i \in \ZZ_n$
\begin{equation}
\label{eq:orientation}
u_i \to u_{i+1},\quad  u_i \to v_i, \quad v_i \to u_{i-1}, \quad  u_i \to v_{i+b},\quad  v_{i} \to v_{i+k}\ \mathrm{and}\ v_i \to u_{i-b-k}
\end{equation}
holds. Now, let $\alpha \in \Aut(\G)$ be the unique automorphism fixing $v_{b}$ and mapping $u_0$ to $u_b$. Then Lemma~\ref{le:HAT} implies that $\alpha$ is of order $3$, and so it cyclically permutes the vertices $u_0, u_b, v_{b-k}$, in this order. It must thus also cyclically permute the respective common neighbors $v_{b+k}, u_{b-1}$ and $u_{-k}$ of these vertices with $v_b$. 

Let $C_3 = (u_0,v_b,u_{-k},v_{-k},v_0)$ and $C_4 = (u_0,v_b,u_{-k},v_{1-k},v_1)$ be the two $5$-cycles containing the $2$-path $P = (u_0,v_b,u_{-k})$ (confront Lemma~\ref{le:k=3}). Since the $2$-path $P\alpha = (u_b,v_b,v_{b+k})$ is contained in the $5$-cycle $C = (u_b,v_b,v_{b+k},u_{b+k},v_{2b+k})$ and $v_{b+k} \ot u_{b+k}$ while for the corresponding two edges on $C_3$ and $C_4$ we have $u_{-k} \to v_{-k}$ and $u_{-k} \ot v_{1-k}$, respectively, it follows that $C = C_4\alpha$, and so $v_{1-k}\alpha = u_{b+k}$ and $v_1\alpha = v_{2b+k}$. Since $u_{1-k}$ is the common neighbor of $u_{-k}$ and $v_{1-k}$, we thus also get $u_{1-k}\alpha = u_{b+k-1}$ and similarly $u_1\alpha = v_{2b}$. As $u_{-k} \to v_{-k}$ and we have already determined the $\alpha$ images of $u_{1-k}$ and $v_{b-k}$, it thus follows that $v_{-k}\alpha = v_{b+2k}$. Similarly $v_0\alpha = u_{b+1}$, implying that $C_3\alpha = (u_b,v_b,v_{b+k},v_{b+2k},u_{b+1})$. Moreover, as $v_{-k} \to v_0$ the edge $u_{b+1}v_{b+2k}$ must be a $1$-spoke (note that, since the $5$-cycles are induced, it cannot be a $(b+k)$-spoke), and so $2k = 2$. It follows that $n = 2m$ is even and $k = m+1$, that is $\G = \N(2m;1,b,b+m+1;m-1)$.

Let us now consider the $\Aut(\G)$-orbit $\oo$ of the $2$-path $P$. Since $u_0 \to v_b \to u_{-k}$, the fact that the action of $\Aut(\G)$ on the set of edges of $\G$ is regular implies that for each pair of vertices $x$ and $y$ of $\G$ with $x \to y$ there is a unique neighbor $z$ of $y$ (not adjacent to $x$) such that $(x,y,z) \in \oo$ and a unique neighbor $w$ of $x$ (not adjacent to $y$) such that $(w,x,y) \in \oo$. Considering $P\alpha$ and $P\alpha^2$ we first find that 
$$
(u_i,v_{i+b},u_{i-k}), (u_i,v_i,v_{i+k}), (v_i,v_{i+k},u_{i+k-1}) \in \oo\quad \mathrm{for}\ \mathrm{all}\ i \in \ZZ_n.
$$
Since $(u_{1-k},v_{1-k},v_1)\alpha = (u_{b+k-1},u_{b+k},v_{2b+k})$, edge-transitivity and the above remarks imply that also 
$$
(u_i,u_{i+1},v_{i+b+1}), (v_i,u_{i-1},v_{i-1}), (v_i,u_{i-b-k},u_{i-b-k+1}) \in \oo\quad \mathrm{for}\ \mathrm{all}\ i \in \ZZ_n.
$$
Therefore, since $(v_{b+1},u_b,v_b) \in \oo$ and $(u_b,v_b)\alpha = (v_{b-k},v_b)$, we get $v_{b+1}\alpha = u_{b-k}$, and so the common neighbor $u_{b+1}$ of $u_b$ and $v_{b+1}$ is mapped by $\alpha$ to $u_{b-k-1}$. Similarly $(v_b,u_{b-1},v_{b-1}) \in \oo$, and so $v_{b-1}\alpha = u_{1-k}$, implying that $u_{b-2}\alpha = v_{1-k}$. Thus
$$
u_{-k}\alpha^{-1}\rho^2\alpha = u_{b+1}\alpha = u_{b-k-1} = u_{-k}\rho^{b-1}\ \mathrm{and}\ 
v_{1-k}\alpha^{-1}\rho^2\alpha = u_b\alpha = v_{b-k} = v_{1-k}\rho^{b-1}.
$$ 
Then Lemma~\ref{le:regular} implies $\alpha^{-1}\rho^2\alpha = \rho^{b-1}$. Observe that this implies $\langle 2 \rangle = \langle b-1 \rangle$ in $\ZZ_{2m}$, and so $b$ is odd, say $b = 2b'+1$. 
We can now also completely determine the action of $\alpha$. For instance, for any $i \in \ZZ_n$ we get $u_{2i}\alpha = u_0\rho^{2i}\alpha = u_0\alpha\rho^{i(b-1)} = u_{i(b-1) + b}$. Similarly we find that for each $i \in \ZZ_n$
\begin{equation}
\label{eq:alpha}
	u_{2i}\alpha = u_{i(b-1) + b},\quad u_{2i+1}\alpha = v_{i(b-1) + 2b},\quad v_{2i}\alpha = u_{i(b-1) + b + 1}\quad \mathrm{and}\quad v_{2i+1}\alpha = v_{i(b-1) + 2b + m + 1}.
\end{equation}
Thus $v_b = v_b\alpha = v_{2b'+1}\alpha = v_{b'(b-1) + 2b + m + 1}$, implying that $0 = b'(b-1) + b + m + 1 = 2b'^2 + 2b' + m + 2 = 2b'(b'+1) + m + 2$ (and consequently $b^2+3 = 4b'^2 + 4b' + 4 = 0$). The fact that $n = 2m$ is even now implies that $m$ must be even, so that $4$ divides $n$. In fact, as $b'(b'+1)$ is even $m \equiv 2 \pmod{4}$ must hold, and so $n \equiv 4 \pmod{8}$. In view of the fact that $\langle 2 \rangle = \langle b-1 \rangle$ holds in $\ZZ_n$ we thus also get that $b-1 \equiv 2 \pmod{4}$, and so $b = 4b_0 - 1$ for some $b_0 \geq 1$.

For the converse let $\G = \N(2m;1,b,b+m+1;m-1)$, where $b = 4b_0-1$ for some $b_0 \geq 1$ and $m > 2$ is an even divisor of $b^2+3$ with $m \equiv 2 \pmod{4}$ and $b < 2m$. It is easy to see that the assumptions imply $3 \leq b \leq 2m-5$ and $b+m+1 \notin \{2,2m-1\}$ (when computed modulo $2m$), implying that the edge $u_0u_1$ is on exactly one $3$-cycle. It thus suffices to prove that $\G$ is edge-transitive. Observe that $\gcd(2m,b-1) = \gcd(2m,2(2b_0-1))$, and so the fact that $m$ divides $b^2+3 = 4((2b_0-1)^2+2b_0-1) + 4$ implies that $\langle b-1 \rangle = \langle 2 \rangle$ in $\ZZ_{2m}$. Thus defining $\alpha$ as in (\ref{eq:alpha}) gives rise to a permutation of the vertex set of $\G$. We show that $\alpha$ also preserves adjacency. In view of the action of $\alpha$ this will imply that $\langle \rho, \alpha \rangle \leq \Aut(\G)$ acts transitively on the set of all edges of $\G$, showing that $\G$ is edge-transitive. 

Observe that, since $m \equiv 2 \pmod{4}$ and $0 = b^2+3 = 2(8b_0^2-4b_0+2)$ in $\ZZ_{2m}$, we must have that $8b_0^2 - 4b_0 + 2 = m$ in $\ZZ_{2m}$, and so $2b_0(b-1) = m-2$. It is now easy to verify that $\alpha$ does indeed preserve adjacency. For instance, the vertex $u_{2i}$ is mapped to $u_{i(b-1)+b}$, while the $\alpha$-images of its six neighbors $u_{2i-1}, u_{2i+1}, v_{2i}, v_{2i+1}, v_{2i+b}, v_{2i+b+m+1}$ can be obtained as follows. Since $u_{2i-1} = u_{2(i-1)+1}$, we have $u_{2i-1}\alpha = v_{(i-1)(b-1)+2b} = v_{i(b-1)+b+1}$, which is a neighbor of $u_{i(b-1)+b}$. That $u_{2i+1}\alpha$, $v_{2i}\alpha$ and $v_{2i+1}\alpha$ are neighbors of $u_{2i}\alpha$ follows directly from (\ref{eq:alpha}). Next, as $v_{2i+b} = v_{2(i+2b_0-1)+1}$, we have $v_{2i+b}\alpha = v_{(i+2b_0-1)(b-1)+2b+m+1} = v_{i(b-1)+b}$, which is a neighbor of $u_{i(b-1)+b}$. Finally, let $m = 2m'$ and note that $m'(b-1) = m$ (since $b-1 \equiv 2 \pmod{4}$). Thus the $\alpha$-image of $v_{2i+b+m+1} = v_{2(i+2b_0+m')}$ is $u_{(i+2b_0+m')(b-1)+b+1} = u_{i(b-1)+m-2+m+b+1} = u_{i(b-1)+b-1}$, which is also a neighbor of $u_{i(b-1)+b}$. We leave the remaining adjacencies to the reader.
\end{proof}

Combining together Lemma~\ref{le:lambda>2}, Proposition~\ref{pro:lambda2} and Proposition~\ref{pro:lambda1} proves Theorem~\ref{the:girth3}. It is now also easy to compile a list of all half-arc-transitive Nest graphs of girth $3$ up to a given order. Of course, one needs to check for potential isomorphisms after compiling the list of all parameter sets satisfying Proposition~\ref{pro:lambda1}.  All $46$ pairwise nonisomorphic examples up to $n = 1000$ (that is, up to order $2000$), are given in Table~\ref{tab:HAT} (note that we have ordered the parameters corresponding to the spokes in increasing order, so that the actual parameter $b$ from the above proposition is not always the one following $1$). It is interesting to note that there are nonisomorphic examples of certain orders, namely of orders $728$, $1064$, $1736$ and $1976$.

\begin{table}
$$
{\small
\begin{array}{|c|c|c|}
\hline
\N(28; 1, 6, 19; 13) & \N(52; 1, 7, 34; 25) & \N(76; 1, 15, 54; 37) \\
\N(84; 1, 10, 51; 41) & \N(124; 1, 11, 74; 61) & \N(148; 1, 22, 95; 73) \\
\N(156; 1, 34, 111; 77) & \N(172; 1, 14, 99; 85) & \N(196; 1, 38, 135; 97) \\
\N(228; 1, 15, 130; 113) & \N(244; 1, 27, 150; 121) & \N(268; 1, 59, 194; 133) \\
\N(292; 1, 18, 163; 145) & \N(316; 1, 47, 206; 157) & \N(364; 1, 19, 202; 181) \\
\N(364; 1, 34, 215; 181) & \N(372; 1, 51, 238; 185) & \N(388; 1, 71, 266; 193) \\
\N(412; 1, 94, 299; 205) & \N(436; 1, 91, 310; 217) & \N(444; 1, 22, 243; 221) \\
\N(508; 1, 39, 294; 253) & \N(516; 1, 99, 358; 257) & \N(532; 1, 23, 290; 265) \\
\N(532; 1, 62, 327; 265) & \N(556; 1, 86, 363; 277) & \N(588; 1, 135, 430; 293) \\
\N(604; 1, 66, 367; 301) & \N(628; 1, 26, 339; 313) & \N(652; 1, 118, 443; 325) \\
\N(676; 1, 46, 383; 337) & \N(724; 1, 98, 459; 361) & \N(732; 1, 27, 394; 365) \\
\N(772; 1, 170, 555; 385) & \N(796; 1, 186, 583; 397) & \N(804; 1, 75, 478; 401) \\
\N(844; 1, 30, 451; 421) & \N(868; 1, 51, 486; 433) & \N(868; 1, 135, 570; 433) \\
\N(876; 1, 130, 567; 437) & \N(892; 1, 79, 526; 445) & \N(916; 1, 190, 647; 457) \\
\N(948; 1, 111, 586; 473) & \N(964; 1, 31, 514; 481) & \N(988; 1, 175, 670; 493) \\
\N(988; 1, 138, 631; 493) & &\\
\hline
\end{array}
}
$$
\caption{All half-arc-transitive Nest graphs of girth $3$ up to order $2000$.}
\label{tab:HAT}
\end{table}

The fact that for valence $6$ we get half-arc-transitive generalizations of generalized Petersen graphs for the first time (there are no half-arc-transitive Rose window graphs) is an interesting fact in its own. We now show that there is another reason why the half-arc-transitive Nest graphs of girth $3$ are important. As was mentioned in the introduction, the graphs constructed in~\cite{KutMarSpa10} are half-arc-transitive of valence $12$ with universal reachability relation. However, except for the fact that half-arc-transitive graphs of valence $4$ cannot have universal reachability relation (see~\cite{Mar98}), it was not know whether half-arc-transitive graphs with universal reachability relation of valences smaller than $12$ (that is $6$, $8$ or $10$) exist. As it turns out, the Nest graphs settle this question in the affirmative by providing an infinite family of examples of valence $6$. 

\begin{theorem}
\label{the:universal}
Let $\G = \N(2m;1,b,b+m+1;m-1)$, where $b = 4b_0-1$ for some $b_0 > 1$ and $m > 2$ is an even divisor of $b^2+3$ with $m \equiv 2 \pmod{4}$ and $b < 2m$. Then $\G$ is a half-arc-transitive graph. Moreover, its reachability relation is universal if and only if $3$ does not divide $m$, while in the case that $3$ does divide $m$ the graph $\G$ has three alternets.
\end{theorem}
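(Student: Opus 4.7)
That $\G$ is half-arc-transitive is immediate from Proposition~\ref{pro:lambda1}, since the hypothesis $b_0 > 1$ excludes the arc-transitive exception $\N(12;1,3,10;5)$ (which corresponds to $b_0 = 1$, $m = 6$). For the reachability statement, I would work with the $\Aut(\G)$-induced orientation of the edges of $\G$ in which $u_0 \to u_1$; by Lemma~\ref{le:rotation} applied to the two directed $3$-cycles $(u_0, u_1, v_1)$ and $(u_0, v_b, v_{b+m+1})$, this orientation is the one from~(\ref{eq:orientation}) with $k = m+1$ (i.e., $v_i \to v_{i+m+1}$ on hubs and $v_i \to u_{i-c}$ on $c$-spokes, where $c = b+m+1$).

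The plan is to recast the alternet count as a count of connected components of an auxiliary bipartite graph $B$. Let $V(B) = V(\G) \times \{+,-\}$ (thinking of $+$ as ``tail'' and $-$ as ``head''), with an edge of $B$ joining $(u,+)$ to $(v,-)$ precisely when $u \to v$ is an arc of $\G$. Each arc of $\G$ corresponds to exactly one edge of $B$, and two edges of $\G$ are alternately adjacent (i.e., they share a vertex $w$ at which both have $w$ as tail or both have $w$ as head) if and only if their $B$-edges share a $B$-vertex. Hence the alternets of $\G$ correspond bijectively to the connected components of $B$, and the number of such components equals the dimension of the $\mathbb{C}$-vector space of functions $\phi : V(B) \to \mathbb{C}$ that are constant on each component of $B$.

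Writing $a_i = \phi(u_i^+)$, $a'_i = \phi(u_i^-)$, $b_i = \phi(v_i^+)$ and $b'_i = \phi(v_i^-)$ for such a $\phi$, the six arc types in~(\ref{eq:orientation}) translate to
\[
a_i = a'_{i+1}, \quad a_i = b'_i, \quad a_i = b'_{i+b}, \quad b_i = a'_{i-1}, \quad b_i = b'_{i+m+1}, \quad b_i = a'_{i-c}.
\]
The first, second and fourth relations let us eliminate primes: $a'_j = a_{j-1}$, $b'_j = a_j$, $b_j = a_{j-2}$. Substituting back, the remaining three equations reduce to the statement that $a : \ZZ_{2m} \to \mathbb{C}$ must be periodic with periods $b$, $b+m$ and $m+3$. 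Conversely, any such $a$ extends uniquely to a locally constant function on $B$. Therefore the number of alternets of $\G$ equals $d = \gcd(b, b+m, m+3, 2m)$.

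The final step is to simplify $d$. Setting $g = \gcd(b,m)$, the identities $\gcd(b, b+m) = g$ and $g \mid m$ give $\gcd(g, 2m) = g$ and $\gcd(g, m+3) = \gcd(g, 3) \in \{1,3\}$, so $d = \gcd(g, 3)$; this equals $3$ exactly when $3 \mid b$ and $3 \mid m$. The divisibility hypothesis $m \mid b^2 + 3$ forces $3 \mid m \Rightarrow 3 \mid b^2 \Rightarrow 3 \mid b$, so $3 \mid b$ is automatic whenever $3 \mid m$; we conclude that $d = 3$ if $3 \mid m$ and $d = 1$ otherwise, yielding the stated dichotomy. The main subtlety to handle carefully is verifying that walks in $B$ genuinely encode alternating walks in $\G$, and that the linear system above truly captures all constraints on $\phi$ so that the dimension of the solution space equals the component count exactly.
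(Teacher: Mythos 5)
Your overall strategy is sound and genuinely different from the paper's. The paper simply exhibits the alternating path $(u_0,v_0,v_{m-1},u_{m-2},u_{m-3})$ and argues via the subgroup $\langle m-3\rangle\leq\ZZ_{2m}$ (whose index is $1$ or $3$ according as $3\nmid m$ or $3\mid m$), whereas you compute the exact number of alternets as the number of components of the tail/head incidence graph $B$, obtaining $d=\gcd(b,\,b+m,\,m+3,\,2m)$ and then reducing this to $\gcd(\gcd(b,m),3)$; the identification of alternets with components of $B$ is fine (alternets are the classes of the transitive closure of ``two arcs share a tail or share a head'', and $B$ has no isolated vertices since every vertex has in- and out-valence $3$), the elimination $a'_j=a_{j-1}$, $b'_j=a_j$, $b_j=a_{j-2}$ is correct and reversible, and the use of $m\mid b^2+3$ to get $3\mid m\Rightarrow 3\mid b$ matches the paper. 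This gives a slightly sharper, fully mechanical count where the paper argues ad hoc.

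There is, however, one genuine gap: your determination of the orientation. Lemma~\ref{le:rotation} (together with $\rho$-invariance of the orientation) only tells you that every $3$-cycle is directed; applied to $(u_0,u_1,v_1)$ it does pin down the rim, $0$- and $1$-spoke arcs, but for the $3$-cycles $(u_i,v_{i+b},v_{i+b+m+1})$ it leaves both cyclic senses open, and your final answer depends on which one holds. Had the orientation been $v_{i+b}\to u_i$, $u_i\to v_{i+b+m+1}$, $v_i\to v_{i+m-1}$ instead, the same computation would yield $\gcd(b-1,\,b+m+1,\,m+1,\,2m)$, which for instance for $m=42$, $b=51$ (the graph $\N(84;1,10,51;41)$ of Table~\ref{tab:HAT}) equals $\gcd(50,10,43,84)=1$ even though $3\mid m$, i.e.\ the wrong choice would produce the wrong dichotomy. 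So you must justify $u_i\to v_{i+b}$ rather than merely cite the directedness of the $3$-cycles. The repair is exactly what the paper leans on when it says the orientation ``follows from the proof of Proposition~\ref{pro:lambda1}'': the explicit automorphism $\alpha$ of (\ref{eq:alpha}) maps the arc $(u_0,u_1)$ to the $b$-spoke arc $(u_b,v_{2b})$, which forces $u_i\to v_{i+b}$ and hence the full orientation (\ref{eq:orientation}) with $k=m+1$ (alternatively, one can rerun the forward argument of that proof to rule out the opposite sense). With that step supplied, your proof is complete.
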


\begin{proof}
That $\G$ is half-arc-transitive follows from Proposition~\ref{pro:lambda1}. Let $R$ be the reachability relation on $\G$ and let us fix the $\Aut(\G)$-induced orientation of the edges of $\G$ such that $u_0 \to u_1$. From the proof of Proposition~\ref{pro:lambda1} it follows that for each $i$ we have $u_i \to u_{i+1}$, $u_i \to v_i$, $u_i \to v_{i+b}$, $v_i \to u_{i-1}$, $v_i \to u_{i-b-m-1}$ and $v_i \to v_{i+m+1}$. Consider now the alternating path $(u_0,v_0,v_{m-1},u_{m-2},u_{m-3})$. The arc $(u_0,v_0)$ is thus $R$-related to any arc whose tail is of the form $u_i$ or of the form $v_{m-1+i}$, where $i$ is any element of the subgroup $\langle m-3 \rangle$ of $\ZZ_{2m}$. Thus, if $m$ is not divisible by $3$ then $R$ is clearly universal. If however $3$ divides $m$, then $3$ also divides $b$ (recall that $m$ divides $b^2 + 3$). It is now easy to see that an alternating path of even length, starting in $u_0$, can only reach vertices of the form $u_{i}$ and $v_{2+i}$, where $i$ is from the subgroup $\langle 3 \rangle$ of $\ZZ_{2m}$. It is thus clear that $R$ has three equivalence classes, with representatives $(u_0,u_1)$, $(u_1,u_2)$ and $(u_2,u_3)$.
\end{proof}

As mentioned in the introduction, it turns out that the graphs from Theorem~\ref{the:universal} appear also in a recent paper by Zhou and Zhang~\cite{ZhoZha17} (albeit in a somewhat different form) in which the authors classified the half-arc-regular bicirculants of valence $6$ (it should be pointed out, however, that the authors did not investigate the natural orientation induced by the action of their automorphism group and were not aware of the fact that there is an infinite family of examples with universal reachability relation among them). Note that, in view of Proposition~\ref{pro:lambda1}, the automorphism groups of our Nest graphs indeed act regularly on the sets of their edges. Furthermore, in the light of~\cite[Proposition~1.1]{ZhoZha17} our result shows that as long as we restrict to graphs of girth $3$ the automorphism group of a half-arc-transitive bicirculant of valence $6$ necessarily acts regularly on its edge-set, except possibly for examples, in which none of the induced subgraphs on the two orbits of the corresponding semiregular automorphism is connected.

\end{document}